\DeclareMathOperator{\tw}{tw}
\DeclareMathOperator{\rtw}{rtw}
\DeclareMathOperator{\rDist}{rDist}
\DeclareMathOperator{\rad}{rad}
\newcommand{\dist}{d}
\newcommand{\area}{A}
\renewcommand{\epsilon}{\varepsilon}
\renewcommand{\P}{\ensuremath{\text{\Large \pentagon}}}
\DeclarePairedDelimiter{\norm}{\lVert}{\rVert}
\newcolumntype{P}[1]{>{\centering\arraybackslash}p{#1}}
\title{Intersection Graphs with and without Product Structure} 
\author{Laura Merker}{Karlsruhe Institute of Technology (KIT), Germany}{laura.merker2@kit.edu}{https://orcid.org/0000-0003-1961-4531}{}
\author{Lena Scherzer}{Karlsruhe Institute of Technology (KIT), Germany}{lena.scherzer@student.kit.edu}{}{supported by the Deutsche Forschungsgemeinschaft - 520723789} 
\author{Samuel Schneider}{Karlsruhe Institute of Technology (KIT), Germany}{samuel.schneider@student.kit.edu}{https://orcid.org/0009-0002-9680-4048}{supported by the Deutsche Forschungsgemeinschaft - 520723789} 
\author{Torsten Ueckerdt}{Karlsruhe Institute of Technology (KIT), Germany}{torsten.ueckerdt@kit.edu}{https://orcid.org/0000-0002-0645-9715}{supported by the Deutsche Forschungsgemeinschaft - 520723789}
\authorrunning{L. Merker, L. Scherzer, S. Schneider, and T. Ueckerdt} 
\keywords{Product structure, intersection graphs, linear local treewidth}
\begin{document}

\maketitle

\begin{abstract}
    A graph class $\mathcal{G}$ admits \emph{product structure} if there exists a constant $k$ such that every $G \in \mathcal{G}$ is a subgraph of $H \boxtimes P$ for a path $P$ and some graph $H$ of treewidth~$k$.
    Famously, the class of planar graphs, as well as many beyond-planar graph classes are known to admit product structure.
    However, we have only few tools to prove the absence of product structure, and hence know of only a few interesting examples of classes.
    Motivated by the transition between product structure and no product structure, we investigate subclasses of intersection graphs in the plane (e.g., disk intersection graphs) and present necessary and sufficient conditions for these to admit product structure.

    Specifically, for a set $S \subset \mathbb{R}^2$ (e.g., a disk) and a real number $\alpha \in [0,1]$, we consider \emph{intersection graphs of $\alpha$-free homothetic copies of $S$}.
    That is, each vertex $v$ is a homothetic copy of $S$ of which at least an $\alpha$-portion is not covered by other vertices, and there is an edge between $u$ and $v$ if and only if $u \cap v \neq \emptyset$.
    For $\alpha = 1$ we have contact graphs, which are in most cases planar, and hence admit product structure.
    For $\alpha = 0$ we have (among others) all complete graphs, and hence no product structure.
    In general, there is a threshold value $\alpha^*(S) \in [0,1]$ such that $\alpha$-free homothetic copies of $S$ admit product structure for all $\alpha > \alpha^*(S)$ and do not admit product structure for all $\alpha < \alpha^*(S)$.
    
    We show for a large family of sets $S$, including all triangles and all trapezoids, that it holds $\alpha^*(S) = 1$, i.e., we have no product structure, except for the contact graphs (when $\alpha= 1$).
    For other sets $S$, including regular $n$-gons for infinitely many values of $n$, we show that $0 < \alpha^*(S) < 1$ by proving upper and lower bounds.
\end{abstract}

\clearpage

\section{Introduction}
\label{sec:introduction}

We are interested in properties $P$ of graph classes, for which, if a graph class $\mathcal{G}$ admits $P$, this certifies that $\mathcal{G}$ is well-behaving in some sense. 
For example, having property $P$ for $\mathcal{G}$ can provide a common structure of graphs $G \in \mathcal{G}$, which can be exploited to prove statements for all graphs in $\mathcal{G}$, or to derive efficient algorithms for graphs in $\mathcal{G}$.
A particularly nice 
property is that of having \emph{bounded treewidth}, i.e., that there exists a constant $t$ such that every graph $G \in \mathcal{G}$ is a subgraph of $H$ for some $t$-tree\footnote{Definitions of $t$-trees, treewidth, and strong products will be given in \cref{sec:preliminaries}.} $H$.
Using the simple structure of $t$-trees, one can for example show that graphs in $\mathcal{G}$ have small balanced separators, or that \textsc{MaxIndependentSet} can be solved in linear time.
However, the $n \times n$-grid graph has treewidth~$n$ and hence, already the class of planar graphs does not have bounded treewidth.

In 2019, Dujmovi\'{c} et al.\ \cite{PlanarGraphsQueueNumber} introduced with \textbf{product structure} a novel concept that generalizes the property of having bounded treewidth.
We say that a graph class $\mathcal{G}$ admits \emph{product structure} if there exists a constant $t$ such that every graph $G \in \mathcal{G}$ is a subgraph of~$H \boxtimes P$ for a path $P$ and some $t$-tree $H$.
For example, the $n \times n$-grid graph is a subgraph of~$P_n \boxtimes P_n$, i.e., contained in the strong product of a path and a graph of treewidth~$1$.
In fact, the class of all planar graphs admits product structure with constant $t = 6$~\cite{ImprovedPlanarGraphProductStructureTheorem}.

This constant $t$ is also called the \emph{row treewidth} (denoted $\rtw$) and specifically, for a graph~$G$ we write $\rtw(G) \leq t$ if $G \subseteq P \boxtimes H$ for a path $P$ and some $t$-tree $H$.
The vertices of~$P \boxtimes H$ are partitioned into ``rows'' (one row for each vertex of $P$), with each row inducing a copy of $H$.
In particular, from any vertex-ordering $\sigma$ of $H$, we obtain a natural drawing 
that reflects the structure of the graph by putting the vertices of the $i$-th row on $y$-coordinate (roughly)~$i$ and $x$-coordinate according to $\sigma$.
See \cref{fig:unit-disk-graph} for some illustrating examples.

\begin{figure}[ht]
    \centering
    \includegraphics{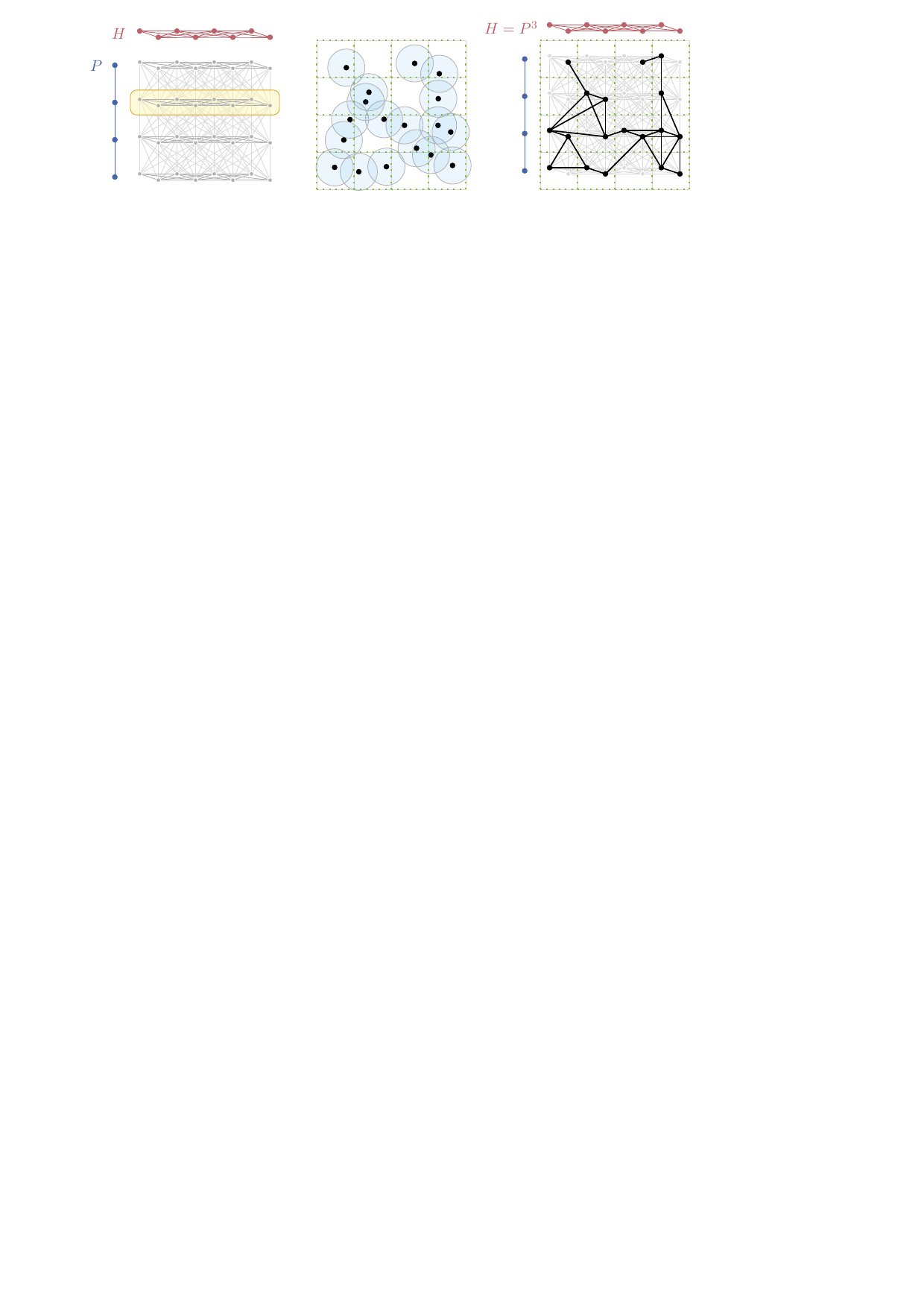}
    \caption{
        The strong product $P \boxtimes H$ with a row highlighted (left),
        a unit disk intersection representation of a graph $G$ (center),
        and its product structure representation $G \subseteq P \boxtimes H$ (right).
        }
    \label{fig:unit-disk-graph}
\end{figure}

Interestingly, one can reverse this procedure in case~$G$ is an intersection graph of unit disks in~$\mathbb{R}^2$ as follows.
Roughly speaking, we superimpose a square grid on the intersection representation and let~$w$ be the largest number of disk centers in any grid cell.
Then we find~$G$ as a subgraph of~$P \boxtimes H$, where~$H = P^{2w-1}$ is the $(2w-1)$-th power of a path, i.e., a~$t$-tree for $t = 2w-1$.
This way, one can conclude for every $w$ that $K_{w/4}$-free unit disk intersection graphs admit product structure.
We refer to~\cite{UnitDisks} for a complete, formal proof.

\subparagraph*{Linear local treewidth.}

On the other side, we can show that a graph class $\mathcal{G}$ has \emph{no} product structure by showing that $\mathcal{G}$ fails to have another (easier to check) property that in turn would be necessary for product structure.
To this end, note that in the product $P \boxtimes H$ each edge either runs within a row or between two consecutive rows. 
It follows that for every $k$ the \emph{$k$-th closed neighborhood} $N^k[v] = \{u \in V \mid \dist(u,v) \leq k\}$ of a vertex $v$ in $P \boxtimes H$ is completely contained in at most~$2k+1$ rows, each having treewidth~$t = \tw(H)$.
Therefore, the treewidth of $N^k[v]$ is at most~$(2k+1)t$, i.e., grows only linearly in~$k$ for constant row treewidth~$t$.

\begin{definition}
    \label{def:linear-local-treewidth}
    %
    A graph class $\mathcal{G}$ has \emph{linear local treewidth} if for all graphs $G \in \mathcal{G}$ and all vertices $v \in V(G)$ the treewidth of the $k$-th closed neighborhood of $v$ is in $\mathcal{O}(k)$.
\end{definition}

Linear local treewidth is a necessary (though not sufficient) condition for a class to admit product structure~\cite[Lemma 6]{DUJMOVIC2017111,PlanarGraphsQueueNumber}.
For example, in $K_4$-free disk intersection graphs we can have an $n \times n$-grid in the neighborhood of a single vertex (\cref{fig:disk-graph}-left), and hence $K_4$-free disk intersection graphs do not admit product structure.
Similarly, the $n\times n \times n$-grid graph has treewidth $\Omega(n^2)$, but lies completely in $N^{3n}[v]$ of any of its vertices $v$ (\cref{fig:disk-graph}-center).
Hence, $K_3$-free intersection graphs of unit balls in $\mathbb{R}^3$ do not admit product structure.

\begin{figure}[ht]
    \centering
    \includegraphics{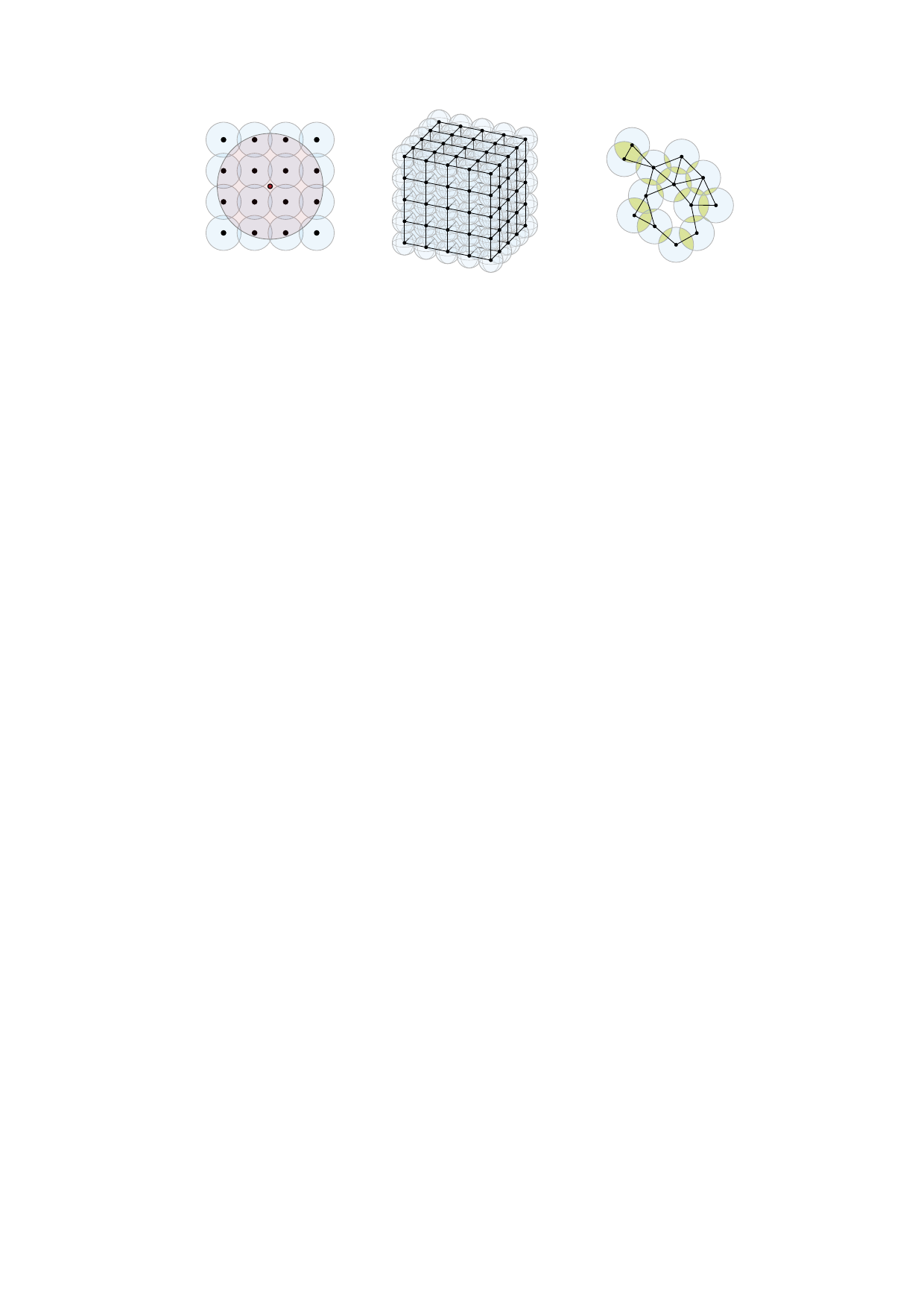}
    \caption{%
        $K_4$-free disk intersection graphs (left) and $K_3$-free unit ball intersection graphs (center) with no product structure, and a $\frac{1}{2}$-free disk intersection graph with the free area in blue (right).
    }
    \label{fig:disk-graph}
\end{figure}



\subparagraph*{Intersection graphs of $\bm{\alpha}$-free homothetic shapes.}

We consider the question when a class of intersection graphs has product structure, and when not.
Crucially, we want the vertices to be represented by homothetic shapes in $\mathbb{R}^2$ of different sizes.
By the discussion above, we must bound the clique size, as well as the size of grids in the neighborhood of any vertex.
We do this by requiring that for every vertex, its corresponding set has an $\alpha$-fraction of its area disjoint from all other shapes (\cref{fig:disk-graph}-right)%
\footnote{This restriction is better suited to investigate the threshold between product structure and no product structure than the (possibly more common) ply, i.e., the number of shapes that may meet at the same point.
In fact, \cref{fig:disk-graph}~left already shows that graphs with ply 3 do not admit product structure.}%
.
For a shape $S \subseteq \mathbb{R}^2$, let us denote its area by $\norm{S}$.

\begin{definition}
    \label{def:alpha-free}
    Let $S \subseteq \mathbb{R}^2$ be a set and $\alpha \in [0,1]$ be a real number.
    A collection~$\mathcal{C} = \{S_v\}_{v \in V}$ of homothetic (obtained from $S$ by positive scaling and translation) copies of $S$ is \emph{$\alpha$-free} if
    \[
        \text{for every $v \in V$ we have } \qquad 
        \norm{S_v - \bigcup_{u \in V-v} S_u} \geq \alpha \cdot \norm{S_v}.
    \]
    We denote by $\mathcal{G}(S,\alpha)$ the class of all intersection graphs of $\alpha$-free homothetic copies of~$S$.
\end{definition}

Whether $\mathcal{G}(S,\alpha)$ has product structure or not clearly depends on $S$ and $\alpha$.
In general, there is a threshold value $\alpha^*(S) \in [0,1]$ such that $\mathcal{G}(S,\alpha)$ has product structure for $\alpha > \alpha^*(S)$, and no product structure for $\alpha < \alpha^*(S)$.
For an integer $n \geq 3$, we denote by $\P_n \subseteq \mathbb{R}^2$ a fixed regular $n$-gon with area~$\norm{\P_n} = 1$.
Our main interest is to determine $\alpha^*(\P_n)$.

When two homothetic copies $S,S'$ of $\P_n$ intersect, a number $m \leq n$ of corners of one, say $S$, is contained in the other, $S'$.
Given that exactly $m$ corners of $S$ are covered by $S'$, the smallest area of $S$ is covered when $S \cap S'$ is the convex hull of $m$ consecutive corners of $S$.
Let us call such a polygon an \emph{$n$-gon segment with $m$ corners} and denote it by $\P_n^m$.
See \cref{fig:InsideOutsideAreaTotal} for an example.
%
%
\begin{figure}
    \centering
    \includegraphics{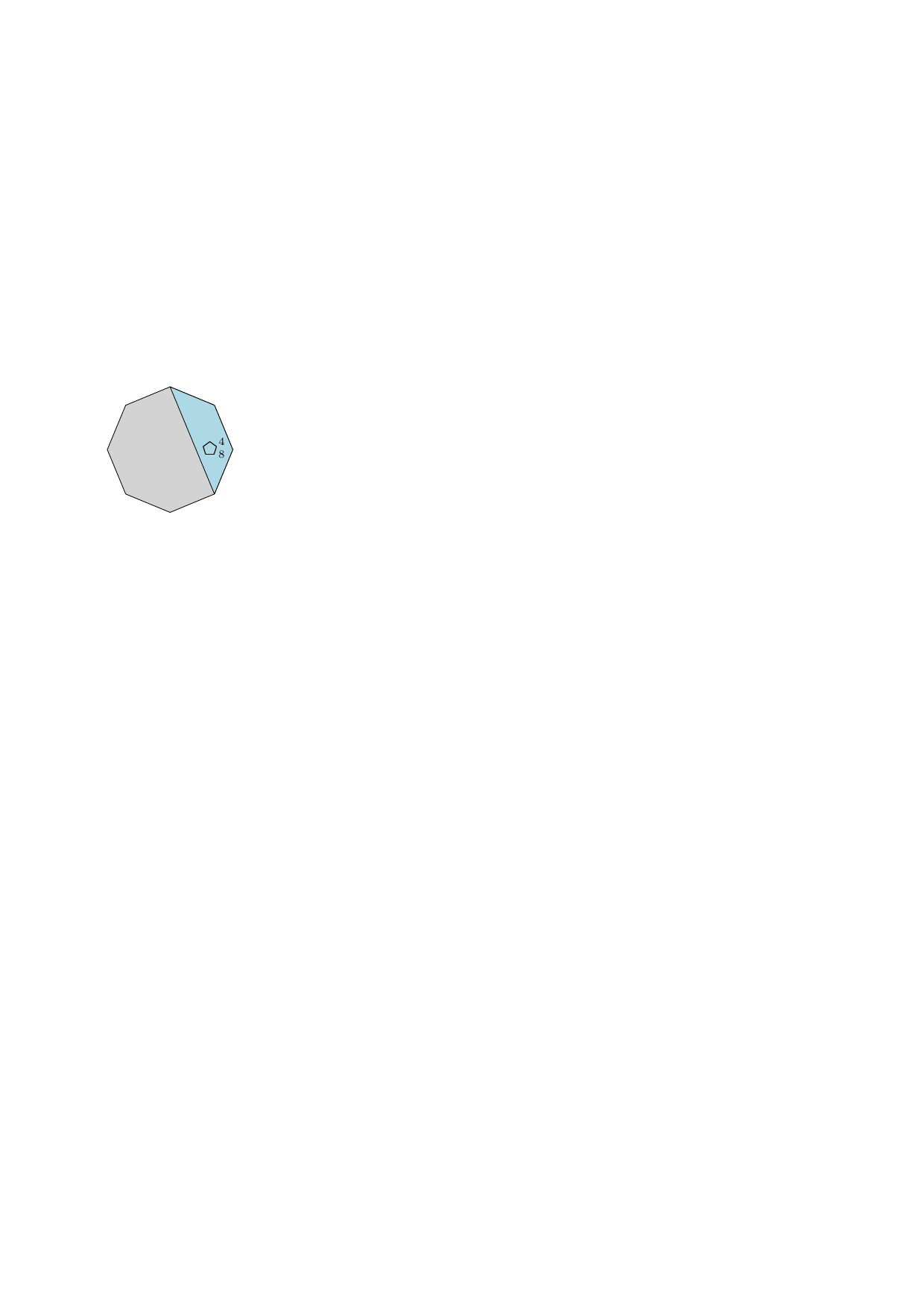}
    \caption{The $8$-gon segment with $4$ corners $\P_8^4$.}
    \label{fig:InsideOutsideAreaTotal}
\end{figure}
Since $\norm{\P_n} = 1$, the size $\norm{\P_n^m}$ of $\P_n^m$ is the fraction of $\P_n$ covered by $\P_n^m$.
In particular, we have $\norm{\P_n^m} \in [0,1]$.
In fact, for most of the results in this paper we take $\alpha = \norm{\P_n^m}$ for an appropriate choice of $m$ (possibly depending on $n$).

\subparagraph*{Our results.}

We investigate whether $\mathcal{G}(S,\alpha)$ has product structure or not.
As soon as $\alpha > 0$, there are indeed constants $w = w(S,\alpha)$ and $k = k(S,\alpha)$, such that all graphs in~$\mathcal{G}(S,\alpha)$ are $K_w$-free and with no $k \times k$-grid in any neighborhood~$N^{1}[v]$.
But, while these obstructions are ruled out,~$\mathcal{G}(S,\alpha)$ has still no product structure if $\alpha > 0$ is too small.

\begin{restatable}{theorem}{resultsnoProductStructureEven}
    \label{thm:results_noProductStructureEven}
    For every $ n \geq 2 $ and $ \alpha < \norm{\P_{2n}^4}$, the class of all intersection graphs of $ \alpha $-free homothetic regular $ 2n $-gons does not admit product structure.
\end{restatable}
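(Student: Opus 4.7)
\emph{Proof proposal.}
My plan is to show that $\mathcal{G}(\P_{2n},\alpha)$ does not have linear local treewidth, which by the cited necessary condition rules out product structure. Concretely, I would exhibit, for every constant $C$, a graph $G\in\mathcal{G}(\P_{2n},\alpha)$ together with a vertex $v$ and a radius $r$ such that $\tw(N^r[v])>C\cdot r$.

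The geometric engine is the following. Fix $\alpha<\norm{\P_{2n}^4}$ and set $\delta:=\norm{\P_{2n}^4}-\alpha>0$. A small homothetic copy $s$ of $\P_{2n}$ can sit inside a larger copy $S$ so that exactly four consecutive corners of $s$ protrude from $S$ and the chord between the two boundary corners of $s$ lies along an edge of $S$. This is geometrically possible because in a regular $2n$-gon the chord joining two vertices that are three edges apart is parallel to an edge. The uncovered part of $s$ is then the $\P_{2n}^4$-shaped cap of area exactly $\norm{\P_{2n}^4}\cdot\norm{s}$, which leaves $\delta\cdot\norm{s}$ of slack for further coverage of $s$ by other shapes while keeping $s$ $\alpha$-free. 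The module becomes infeasible once $\alpha\geq\norm{\P_{2n}^4}$, which is why the statement's threshold is $\norm{\P_{2n}^4}$.

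Using this module as a building block, I would construct $G$ hierarchically: start with a root $2n$-gon $S_v$, place many scaled children inside with their four-corner tips pointing along different edges of $S_v$, and recurse inside each child. Using the slack $\delta$, I would also add lateral intersections between selected siblings at the same level, each sibling contributing only a tiny fraction of the neighbor's area so that the total coverage stays below $1-\alpha$. Choosing the lateral overlaps at each scale so that the realized sibling subgraph contains a large grid (or any bounded-degree graph of large treewidth), the neighborhood $N^{O(h)}[v]$ of the root at depth $h$ should contain a subgraph whose treewidth grows super-linearly in $h$.

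The main obstacle is the geometric realization. Since homothetic copies of $\P_{2n}$ admit only translation and scaling, there are just $2n$ admissible protrusion directions at every scale, and one must simultaneously (i) realize every intended parent--child and sibling--sibling intersection, (ii) avoid spurious intersections across levels, and (iii) keep the total covered fraction of every shape strictly below $1-\alpha$. The slack $\delta$ is exactly what makes the sibling overlaps at each scale possible on top of the tight $(1-\norm{\P_{2n}^4})$ coverage imposed by the parent. Once the construction is verified, locating the planned high-treewidth subgraph inside $N^r[v]$ yields the violation of linear local treewidth, and hence the absence of product structure.
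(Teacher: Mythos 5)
Your overall strategy --- refuting linear local treewidth --- is the same as the paper's, and your key geometric observation is also essentially the paper's: a small copy of $\P_{2n}$ can straddle the boundary of a large copy with four consecutive corners outside (or $\epsilon$-close to the boundary), so that its free fraction is arbitrarily close to $\norm{\P_{2n}^4}$ from below, and the slack $\delta=\norm{\P_{2n}^4}-\alpha$ pays for one additional tiny intersection. The paper uses exactly this gadget to let $k$ independent edges $X_iY_i$ cross a contact point of two large $2n$-gons.

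However, the construction you build on top of this gadget has a genuine gap, and as described it cannot work. If you ``recurse inside each child,'' a grandchild that lies inside its parent lies (up to the parent's small protruding cap) inside the root as well, so it is entirely covered by the root and has free area $0$ --- it is not $\alpha$-free for any $\alpha>0$. The only escape is to confine each generation to the protruding cap of its parent, but then the structure collapses onto ever-smaller neighborhoods of the root's boundary, and it is not clear (and you do not argue) that the ``lateral sibling intersections'' can realize a graph of unbounded treewidth there; conversely, if every shape intersects the root, the radius is $O(1)$ and you would need unbounded treewidth within $N^{1}[v]$, which is a much stronger geometric claim that you never substantiate. More generally, you never specify which high-treewidth graph is actually realized or how its treewidth is lower-bounded; ``should contain a subgraph whose treewidth grows super-linearly in $h$'' is the entire content of the theorem and is left as an assertion. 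The paper separates the two jobs that your recursion conflates: a sparse subdivided $(k+1)\times(k+1)$ grid of touching $2n$-gons controls the radius; interior-disjoint (hence $1$-free) small $k\times k$ grids placed in the \emph{empty} cells supply the treewidth; and the $\norm{\P_{2n}^4}$-slack is spent only locally, on the crossing gadgets that let $k$ connecting paths pass through each contact point of the large grid, linking the small grids into a $k^2\times k^2$ grid of treewidth $\Omega(k^2)$ at radius $O(k)$. To repair your proof you would need a concrete arrangement playing these roles, at which point you would essentially have rederived the paper's nested-grid construction.
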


In other words, we have $\alpha^*(\P_{2n}) \geq \norm{\P_{2n}^4} > 0$ for every $n \geq 2$.
As an interesting special case, we highlight that $\norm{\P_4^4} = 1$ and hence $\alpha^*(\P_4) = 1$.
That is, for every $\bar{\alpha} > 0$ we can construct collections $\mathcal{C} = \{S_v\}_{v \in V}$ of axis-aligned squares such that each square $S_v \in \mathcal{C}$ has at most an $\bar{\alpha}$-fraction of its area covered by $\mathcal{C}-S_v$, i.e., $\mathcal{C}$ is arbitrarily close to a contact representation, and still the intersection graphs of $\mathcal{C}$ do not admit product structure\footnote{Hence, these graphs do not belong to any graph class with product structure as listed in \textbf{Related work}.}.


In fact, we may encounter the same situation among general (irregular) convex $n$-gons.


\begin{theorem}
    \label{thm:results_noProductStructureNonRegular}
    For every $ n \geq 3 $ there is an $ n $-gon $ S $ such that for all $ \alpha < 1 $ the class of intersection graphs of $ \alpha $-free homothetic copies of $ S $ does not admit product structure.
\end{theorem}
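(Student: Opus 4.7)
The plan is to split into cases on $n$ and use Theorem~\ref{thm:results_noProductStructureEven} as a black box where possible. For $n = 4$, plugging $n = 2$ into Theorem~\ref{thm:results_noProductStructureEven} already gives $\alpha^*(\P_4) = 1$, so $S = \P_4$ works. For $n \geq 5$ I would build $S$ from $\P_4$ by $n - 4$ short corner cuts, performed at a single corner of $\P_4$ via $n - 4$ short segments forming a convex polyline, yielding a convex $n$-gon $S \subseteq \P_4$ whose area ratio $\beta := \norm{S}/\norm{\P_4}$ is arbitrarily close to $1$. Given $\alpha < 1$, I would pick $\alpha' \in (\alpha, 1)$ and apply Theorem~\ref{thm:results_noProductStructureEven} to obtain an infinite family of $\alpha'$-free square configurations $\mathcal{C}_k = \{P_v\}$ whose intersection graphs witness the failure of product structure (say, by containing arbitrarily large grids in one-step neighborhoods). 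Replace each square $P_v$ by the inscribed copy $S_v$ of $S$ obtained via the same scaling and translation. Since $S \subseteq \P_4$, no new intersections arise; assuming (after rescaling or a small generic perturbation) that every pairwise intersection in $\mathcal{C}_k$ contains an open ball of some relative radius bounded away from $0$, choosing cuts of smaller relative depth preserves every original intersection, so the intersection graph is unchanged. For the free area, using $S_u \subseteq P_u$ for every $u$ and the $\alpha'$-freeness of the squares,
\[
    \norm*{S_v - \bigcup_{u \neq v} S_u} \;\geq\; \norm{S_v} - (1 - \alpha')\norm{P_v} \;=\; \left(1 - \frac{1-\alpha'}{\beta}\right)\norm{S_v},
\]
which is at least $\alpha\norm{S_v}$ as soon as $\beta \geq (1 - \alpha')/(1 - \alpha)$, i.e., once the cuts are sufficiently shallow. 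Hence $\mathcal{G}(S, \alpha)$ inherits the failure of product structure from $\mathcal{G}(\P_4, \alpha')$.

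For $n = 3$ the perturbation trick fails since homothetic copies of a single triangle all share one orientation and cannot approximate a square. Here I would take $S$ to be a very long and thin right triangle and construct the bad configurations directly: for each $k$, place one large copy $v$ of $S$ and arrange a two-dimensional $k \times k$ lattice of small homothetic copies of $S$ in a narrow strip along one of the two long edges of $v$, each positioned so that only a tiny tip crosses that edge into the interior of $v$ (creating the edges to $v$), while each pair of lattice-neighbors overlaps in a small corner region (creating the grid edges). Taking $S$ thin enough and the lattice parameters suitable makes every covered fraction arbitrarily small, so the family is $\alpha$-free for any prescribed $\alpha < 1$, while $N^1[v]$ contains a $k \times k$ grid for every $k$ and hence violates linear local treewidth.

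The hard part will be the triangle case: because homothety forbids rotation, all lattice copies share the orientation of $v$, and a triangle has only three fixed edge directions; arranging a genuine two-dimensional lattice of translates next to a single edge of $v$ so that tip intrusion into $v$, pairwise lattice overlaps, and $\alpha$-freeness coexist as $\alpha \to 1$ is the delicate geometric step, requiring careful area estimates specific to thin triangles. A minor secondary concern for $n \geq 5$ is verifying the uniform intersection-robustness hypothesis on the square configurations from Theorem~\ref{thm:results_noProductStructureEven}; if this is not automatic from the construction, a small generic perturbation of the square configuration, combined with the flexibility that homothets of $\P_4$ have in their placement, should restore it without changing the intersection graph.
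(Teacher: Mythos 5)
Your strategy diverges from the paper's: the paper proves \cref{lem:nonRegularNGons} for a single family of shapes (convex $n$-gons with two orthogonal adjacent sides spanning a bounding rectangle, e.g.\ a right triangle for $n=3$) and runs the nested-grid construction of \cref{sec:gridConstruction} directly, with a crossing gadget in which each inserted shape has exactly one corner --- an arbitrarily shallow tip --- inside one other shape, so that $\alpha$-freeness holds for every $\alpha<1$. Your plan instead reduces $n\geq 5$ to the square case of \cref{thm:results_noProductStructureEven} by corner-cutting and treats $n=3$ by a separate one-neighborhood construction; both halves have genuine gaps (only $n=4$, taking $S=\P_4$, is immediate).

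The $n=3$ plan cannot work: you propose to realize a $k\times k$ grid, for arbitrarily large $k$, entirely inside $N^1[v]$ of one big triangle. But for every $\alpha>0$ there is a constant $k(S,\alpha)$ such that no graph in $\mathcal{G}(S,\alpha)$ has a $k(S,\alpha)\times k(S,\alpha)$ grid in any $N^1[v]$ --- the paper states this explicitly in the introduction, and it is precisely why its construction spreads the grid over a radius-$O(k)$ neighborhood instead of a radius-$1$ one. Concretely, the back rows of your lattice cannot reach the edge of $v$ without running through the front rows, and making them long enough to reach it destroys $\alpha$-freeness; this is not merely ``delicate'' but impossible. For $n\geq 5$, your reduction rests on the hypothesis that every pairwise intersection in the square configurations contains a ball of relative radius bounded away from $0$. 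This is false for the construction behind \cref{thm:results_noProductStructureEven} (the subdivision shapes meet at single corner points, and the crossing shapes $X_i,Y_i$ meet in a degenerate, measure-zero set), and it is also incompatible with the quantifier order of the theorem: $S$, hence the relative cut depth $\delta_0$, must be fixed before $\alpha$, and an intersection containing a ball of relative radius $\delta_0$ covers a fraction $\Omega(\delta_0^2)$ of the smaller shape, so such configurations cannot be $\alpha$-free once $\alpha>1-\Omega(\delta_0^2)$. The condition that actually matters is whether each essential contact avoids the cut corner of both participating shapes, and verifying that requires reworking the square construction contact by contact rather than invoking a generic perturbation.
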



For regular $n$-gons however, if $\alpha$ is large enough, we always have product structure.

\begin{restatable}{theorem}{resultsProductStructure}
    \label{thm:results_productStructure}
    There is an $\alpha < 1 $ such that for
    all $n > 6$, 
    the class of intersection graphs of~$\alpha$-free homothetic regular $n$-gons admits product structure.
\end{restatable}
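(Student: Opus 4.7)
My plan is to choose $\alpha$ close enough to $1$, uniformly in $n > 6$, so that $\alpha$-free arrangements of regular $n$-gons satisfy strong local geometric constraints, and then to construct an explicit witness $G \subseteq P \boxtimes H$ by adapting the grid-based technique described for unit disks in the introduction. The uniformity in $n$ is made possible by the fact that every regular $n$-gon with $n \geq 7$ is uniformly fat: its circumradius-to-inradius ratio is at most $\sec(\pi/7) < 1.11$, so all geometric constants depend only on this worst-case aspect ratio and on $\alpha$, and choosing $\alpha$ for the case $n = 7$ will suffice for all larger $n$.

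The technical core will consist of two geometric lemmas that I expect to derive from the $\alpha$-free condition together with fatness.
\textbf{Lemma A (bounded ply):} there is $p = p(\alpha)$ such that in any $\alpha$-free arrangement of regular $n$-gons with $n > 6$, every point of the plane lies in at most $p$ shapes.
\textbf{Lemma B (bounded same-or-larger-scale degree):} there is $q = q(\alpha)$ such that each shape $S_v$ has at most $q$ intersecting neighbors $S_u$ with circumradius $r(u) \geq r(v)$.
Both lemmas reduce to the $(1-\alpha)\norm{S}$ bound on the area of the smallest involved shape that may be covered by others. For Lemma A, each larger shape containing a common interior point must, by fatness, cover a constant fraction of the smallest shape's area. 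For Lemma B, the candidate neighbors are pairwise almost-disjoint fat shapes of size at least $r(v)$ meeting a fat shape of size $r(v)$, of which there can be only $\mathcal{O}(1)$ by a standard packing argument around the perimeter.

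With Lemmas A and B in hand, I will build the product-structure witness via a dyadic hierarchical grid. Assign each shape a scale $\sigma(v) = \lfloor \log_2 r(v) \rfloor$ and, at each scale $\sigma$, overlay an axis-aligned grid of cell side $\Theta(2^{\sigma})$; by Lemma A each such cell contains $\mathcal{O}(1)$ shapes of its own scale. Organize the scales using the implicit quadtree and, using Lemma B, attach each coarser-scale shape to the $\mathcal{O}(1)$ finer cells it intersects. The path $P$ will encode the $y$-order of cells at a common fine scale, while the witness $t$-tree $H$ will encode the $x$-order within each row together with the bounded number of coarser-scale shapes attached there. Since any edge of the intersection graph has its two endpoints in owning cells whose rows are adjacent in $P$, this will give $G \subseteq P \boxtimes H$ with row treewidth depending only on $p$ and $q$, hence only on $\alpha$.

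The hard part will be the last step: assembling $H$ as a $t$-tree of bounded width while accommodating shapes of many scales in a single row. Giving each scale its own row fails, because then a coarse shape would span unboundedly many rows; the correct fix, enabled by Lemma B, is to place each coarse shape into the fine row of a chosen witness cell it covers, yielding only $\mathcal{O}(1)$ coarse shapes per fine row. Executing this attachment cleanly---most likely via a bottom-up quadtree traversal with an interleaved ordering that weaves in coarser shapes at the right position---so that the resulting graph on each row is a path-power of bounded width, will be the main technical challenge; once done, Lemmas A and B guarantee that the parameters are uniform for all $n > 6$.
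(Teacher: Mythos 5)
Your strategy has a decisive flaw: it proves too much. The only consequences of $\alpha$-freeness you extract are fatness-based --- bounded ply (Lemma A) and bounded same-or-larger-scale degree (Lemma B) --- and axis-aligned squares satisfy all of these just as well as regular $n$-gons with $n>6$ (their circumradius-to-inradius ratio is $\sqrt{2}$, and for $\alpha$ close to $1$ both lemmas hold with constants depending only on $\alpha$). Yet \cref{thm:results_noProductStructureEven} with $n=2$ shows that $\alpha$-free squares admit \emph{no} product structure for any $\alpha<1$, so no argument relying solely on fatness, ply, and scale-degree can succeed. What you never use, and what the restriction $n>6$ is actually for, is the angle condition: the interior angle of $\P_n$ exceeds $2\pi/3$, so three $\alpha$-free copies cannot share a common point once $\alpha$ is close enough to $1$ (\cref{lem:3touching-overlap}). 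The paper combines this with the observation that any crossing in the canonical drawing forces a point lying in three of the four shapes involved (\cref{lem:Crossing3Touching}); hence the canonical drawings are planar and product structure follows immediately from the planar product structure theorem, with no grid or quadtree construction at all. The obstruction for squares lives exactly in the triple points your lemmas do not exclude.

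Independently of the counterexample, the assembly step you flag as ``the main technical challenge'' does fail as sketched. A coarse shape at scale $\sigma$ can be adjacent to fine shapes at scale $\sigma'\ll\sigma$ distributed along its whole boundary, i.e.\ across $\Theta(2^{\sigma-\sigma'})$ fine rows (such configurations are $\alpha$-free for $\alpha$ arbitrarily close to $1$: think of tiny $n$-gons barely overlapping a huge one). Placing that coarse shape into a single witness fine row leaves its edges to non-adjacent fine rows unrealizable in $P\boxtimes H$, since every edge of a strong product with a path stays within two consecutive rows; Lemma B bounds how many coarse shapes attach to a given fine row, but says nothing about how many fine rows a single coarse shape must reach. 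This cross-scale spreading is precisely the mechanism the paper's nested-grid construction exploits, so the gap is not a routine bookkeeping issue but the heart of the matter.
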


In other words, we have an $\hat{\alpha} < 1$ such that $\alpha^*(\P_n) \leq \hat{\alpha}$ for every $n > 6$.
We prove \cref{thm:results_productStructure} by showing that the graphs in $\mathcal{G}(\P_n,\hat{\alpha})$ are planar, and for this reason $\mathcal{G}(\P_n,\hat{\alpha})$ admits product structure~\cite{PlanarGraphsQueueNumber}.
To prove the planarity, we consider the \emph{canonical drawing} of the intersection graph $G$, which is derived from an intersection representation with $\alpha$-free homothetic regular $n$-gons by placing each vertex $v$ at the center of its shape $S_v$ and draw each edge $uv$ as a short $1$-bend polyline inside $S_u \cup S_v$.
We define and discuss these drawings more detailed in \cref{sec:embedding}.
In fact, also many beyond-planar graph classes admit product structure (as we shall list below).
And we actually suspect (cf.\ \cref{conj:k-independent-threshold} below) that whether or not $\mathcal{G}(\P_n,\alpha)$ admits product structure is equivalent to whether or not the canonical drawings of the graphs in $\mathcal{G}(\P_n,\alpha)$ belong to a novel type of beyond-planar drawing style.

\begin{definition}
    \label{def:k-independent-crossing}
    For $k \geq 0$, a topological drawing\footnote{Vertices are points and edges are curves connecting their end-vertices. Any two edges have only finitely many points in common; each being either a common endpoint or a proper crossing.} $\Gamma$ of a graph $G$ in $\mathbb{R}^2$ is \emph{$k$-independent crossing} if no edge $e$ of $G$ is crossed in $\Gamma$ by more than $k$ independent edges of $G$.
\end{definition}

Clearly, $0$-independent crossing drawings are precisely planar drawings.
And $1$-independent crossing drawings are precisely fan-crossing drawings~\cite{fan_crossing}.
In general, in a $k$-independent crossing drawing, every edge may for example be crossed by $k$ stars of edges.

\begin{conjecture}
    \label{conj:k-independent-threshold}
    The class of intersection graphs of $\alpha$-free homothetic regular $n$-gons admits product structure if and only if their canonical drawings are $k$-independent crossing for a global constant $k$ (possibly depending on $n$).
\end{conjecture}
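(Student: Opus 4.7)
\medskip

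\noindent\textbf{Proof plan.}
Since this is a conjecture, my plan has to be split across the two implications, with the forward direction (product structure implies bounded independent crossing) looking much more tractable than the reverse. The overall strategy is to translate the geometric data of the canonical drawing into combinatorial grid-like witnesses on one side, and into a strip decomposition on the other.

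\emph{Plan for ``product structure $\Rightarrow$ canonical drawings are $k$-independent crossing''.} I would argue by contrapositive and try to mimic the obstruction used in Theorem~\ref{thm:results_noProductStructureEven}. Suppose there is no global $k$ so that every canonical drawing is $k$-independent crossing. Then, by a compactness/pigeonhole argument, we get graphs $G \in \mathcal{G}(\P_n,\alpha)$ containing an edge $uv$ crossed by arbitrarily many pairwise independent edges $a_1b_1, a_2b_2, \dots$ Each such $a_ib_i$ comes from shapes $S_{a_i}, S_{b_i}$ whose union meets the straight segment between the centres of $S_u$ and $S_v$. Using $\alpha$-freeness, the endpoints $a_i, b_i$ cannot pile up arbitrarily around that short segment, so they must spread along a thin neighbourhood of it. The next step is to use the regular $n$-gon geometry (as in the proof of \cref{thm:results_noProductStructureEven}) to force these endpoints into a near-grid pattern inside $N^{2}[u] \cup N^{2}[v]$, thereby violating linear local treewidth and contradicting product structure.

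\emph{Plan for ``canonical drawings are $k$-independent crossing $\Rightarrow$ product structure''.} Here the plan is to exhibit an explicit row decomposition coming from the geometry. Starting from an $\alpha$-free representation giving a $k$-independent crossing canonical drawing, I would sort shapes by the $y$-coordinate of their centres and sweep from bottom to top, accumulating shapes into ``rows'' $R_1, R_2, \dots$ using a width threshold depending on $n$, $\alpha$, and $k$. The path $P$ is the resulting sequence of rows. To show that the graph induced on $R_i \cup R_{i+1}$ has bounded treewidth, I would combine two ingredients: (i) the $\alpha$-free condition bounds the clique number within a row, and (ii) the $k$-independent crossing property bounds how many edges of $R_i \cup R_{i+1}$ can cross any given horizontal ``reference'' segment, which in turn gives a bounded-width path decomposition by sweeping along $x$-coordinates. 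Finally I would lift this to a product structure for the whole graph by routing vertices between consecutive rows via the edges of $P \boxtimes H$.

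\emph{Main obstacle.} The hard part will clearly be the reverse implication, and specifically step~(ii) above: the $k$-independent crossing property is a purely drawing-theoretic condition, and it is not at all obvious that it translates into a bounded-width sweep for a BFS-style or $y$-strip-style layering. For planar graphs and for fan-crossing graphs (the $k=0,1$ cases) such translations are known, but for general $k$ with a geometric origin no analogue seems to exist in the literature. I expect that fully resolving the conjecture will require either a new structural theorem for $k$-independent crossing graphs of geometric origin or a stronger geometric invariant sitting strictly between $k$-independent crossing and product structure that sandwiches both sides of the equivalence.
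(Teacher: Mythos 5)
You should first be aware that the statement you are proving is stated in the paper as a \emph{conjecture}: the authors offer no proof of it, so there is no argument to compare yours against, only partial results. Concretely, the paper proves \cref{thm:results_k-independent-crossing}, which says that the canonical drawings of $\mathcal{G}(\P_n,\alpha)$ are $k$-independent crossing for some $k = k(n)$ exactly when $\alpha \geq s(n)$. The conjecture is therefore equivalent to the assertion that $\alpha^*(\P_n) = s(n)$, whereas the paper only establishes $\alpha^*(\P_{2n}) \geq \norm{\P_{2n}^4}$ (\cref{thm:results_noProductStructureEven}) and $\alpha^*(\P_n) \leq \hat{\alpha} < 1$ for $n > 6$ (\cref{thm:results_productStructure}); these bounds meet only for $n \in \{4,6\}$.

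The genuine gap in your forward direction is the step that passes from ``some edge $uv$ is crossed by arbitrarily many independent edges'' to ``linear local treewidth fails''. A single heavily crossed edge witnesses only a large induced matching near one segment, which is a bounded-treewidth configuration; it does not by itself produce an $\Omega(k^2)$-treewidth subgraph inside a radius-$O(k)$ neighborhood. The paper's no-product-structure machinery (\cref{sec:gridConstruction}) needs far more: a large subdivided grid to control the radius, a small grid in every cell, and a crossing gadget of $k$ independent edges at \emph{every} contact of the large grid, all simultaneously $\alpha$-free; the authors explicitly state in the conclusion that they cannot realize this for $\alpha$ close to $s(n)$ unless $n \in \{4,6\}$ (\cref{con:NoProductStructure}). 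Your plan would have to supply exactly that missing construction, and ``forcing the endpoints into a near-grid pattern'' is an assertion, not an argument. For the reverse direction, your $y$-coordinate sweep is problematic because the homothets have unbounded size ratios, so a single shape can intersect arbitrarily many of your rows; and step~(ii), converting $k$-independent crossing into a bounded-width layering, is precisely the paper's other open conjecture (that $k$-independent crossing graphs admit product structure), for which the only known templates are the shallow-minor arguments of Hickingbotham and Wood for related classes. In short, both implications of your plan bottom out exactly at the two open problems the paper isolates; it is a reasonable roadmap, but not a proof.
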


Our final contribution is to exactly determine $\alpha$ in terms of $n$ for which the canonical drawings of all graphs in $\mathcal{G}(\P_n,\alpha)$ are $k$-independent crossing for some 
$k$.
For this, we define 
\begin{equation}
    \label{eq:threshold-definition}
    s(n) =
    \begin{cases*}
        \norm{\P_n^{n/2+2}}                 & if $n \equiv 0 \pmod  4$ \\
        \norm{\P_n^{\lceil n/2 \rceil+1}}   & if $n \equiv 1 \pmod  4$ \\
        \norm{\P_n^{n/2+1}} = \frac12       & if $n \equiv 2 \pmod  4$ \\
        \norm{\P_n^{\lceil n/2 \rceil+2}}   & if $n \equiv 3 \pmod  4$.
    \end{cases*}
\end{equation}
The function $ s(n) $ is defined as the tipping point whether or not two regular $\alpha$-free $ n $-gons can meet in a third regular $ n $-gon $ S $ without containing a corner of $ S $.
See \cref{fig:snIdea} for an illustration and
\cref{fig:sn-plot} for a plot of $s(n)$.
In particular note that $s(n) \geq \frac12$ for all $n$ and $\lim_{n\to\infty} s(n) = \frac12$. 
The four cases are due to whether or not the four corners closest to the boundary of $ S $ need to be inside $ S $ in order for the two $ n $-gons to meet, e.g., 
in the case $ n \equiv 0 \mod 4 $ only $ n/2 $ of the corners are inside $ S $ (and therefore, at the tipping point where the two $ n $-gons meet exactly at a corner of $ S $, $ n/2 + 2 $ are outside or at the boundary), while half of the area is covered if $ n/2 + 1 $ corners are inside $ S $ as in the case $ n \equiv 2 \mod 4 $.

\begin{figure}
    \centering
    \includegraphics{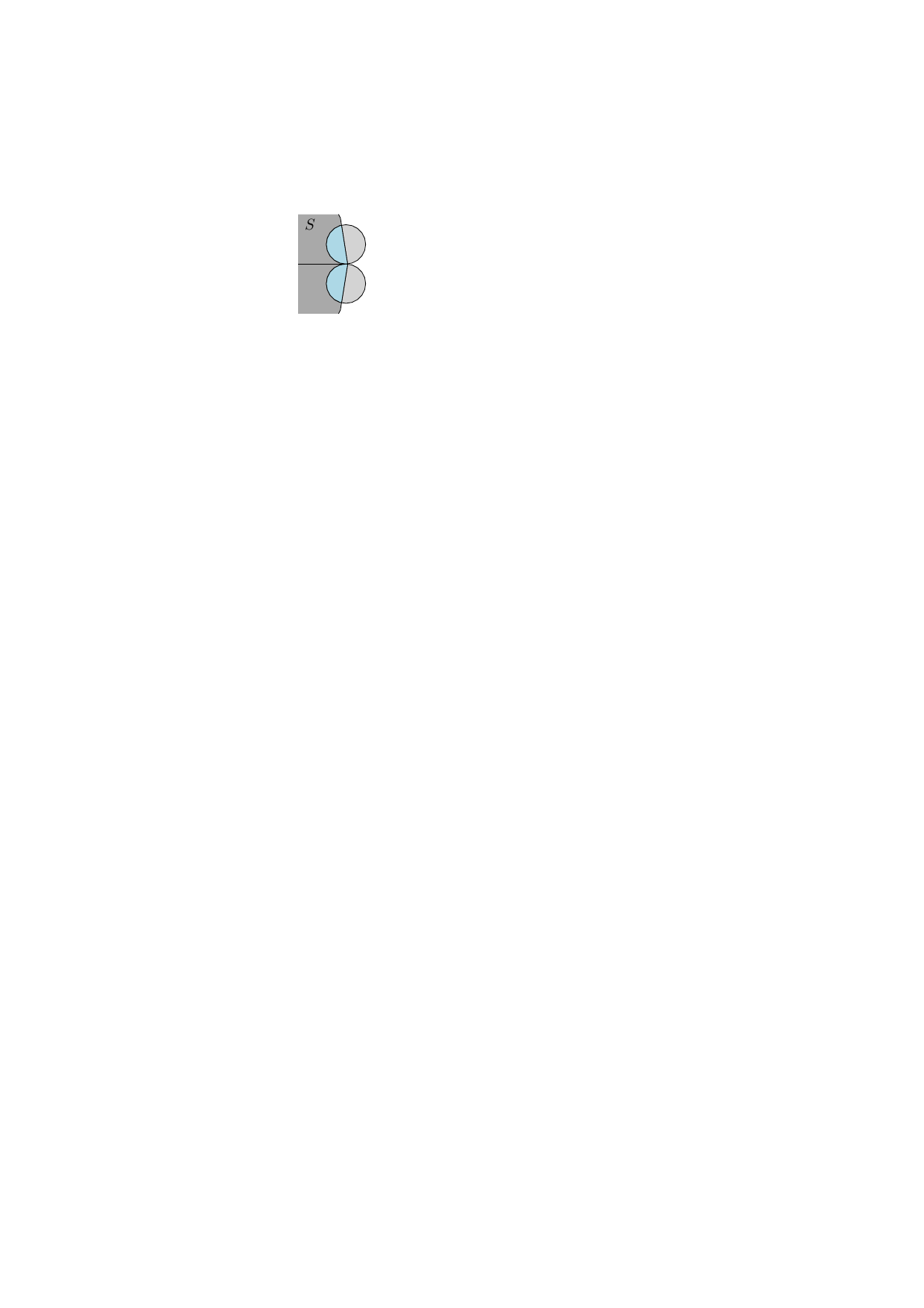}
    \hfill
    \includegraphics{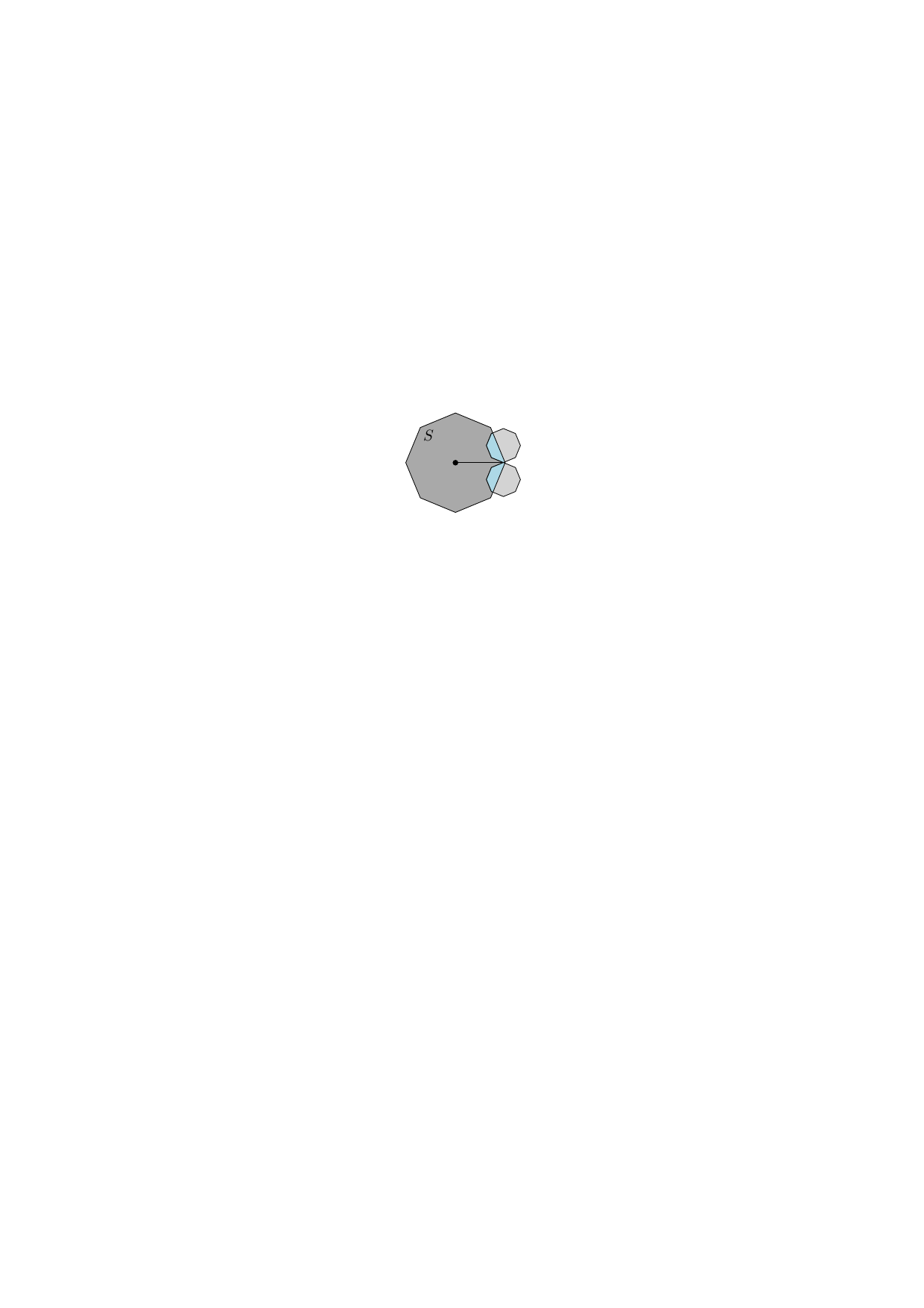}
    \hfill
    \includegraphics{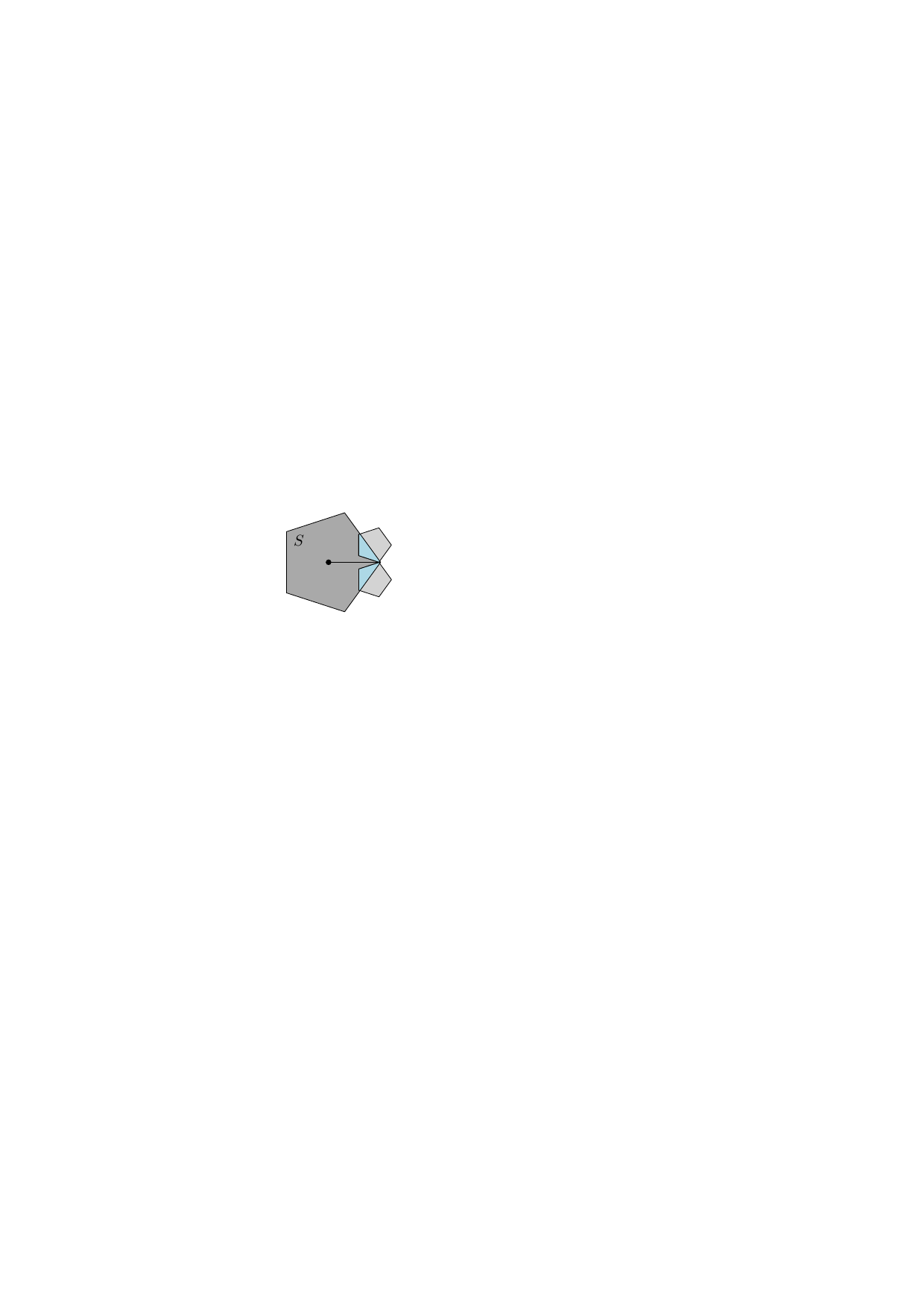}
    \hfill
    \includegraphics{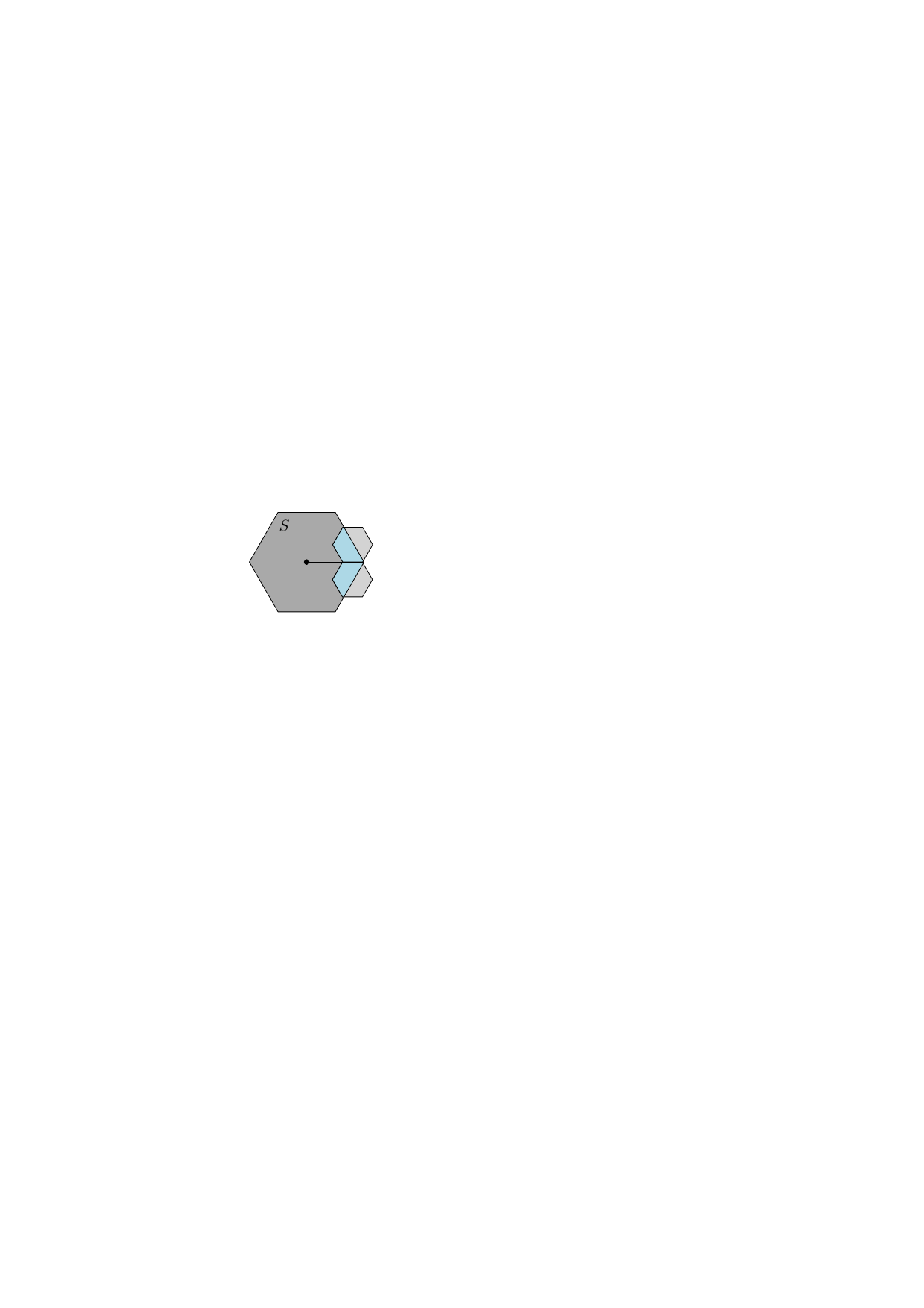}
    \hfill
    \includegraphics{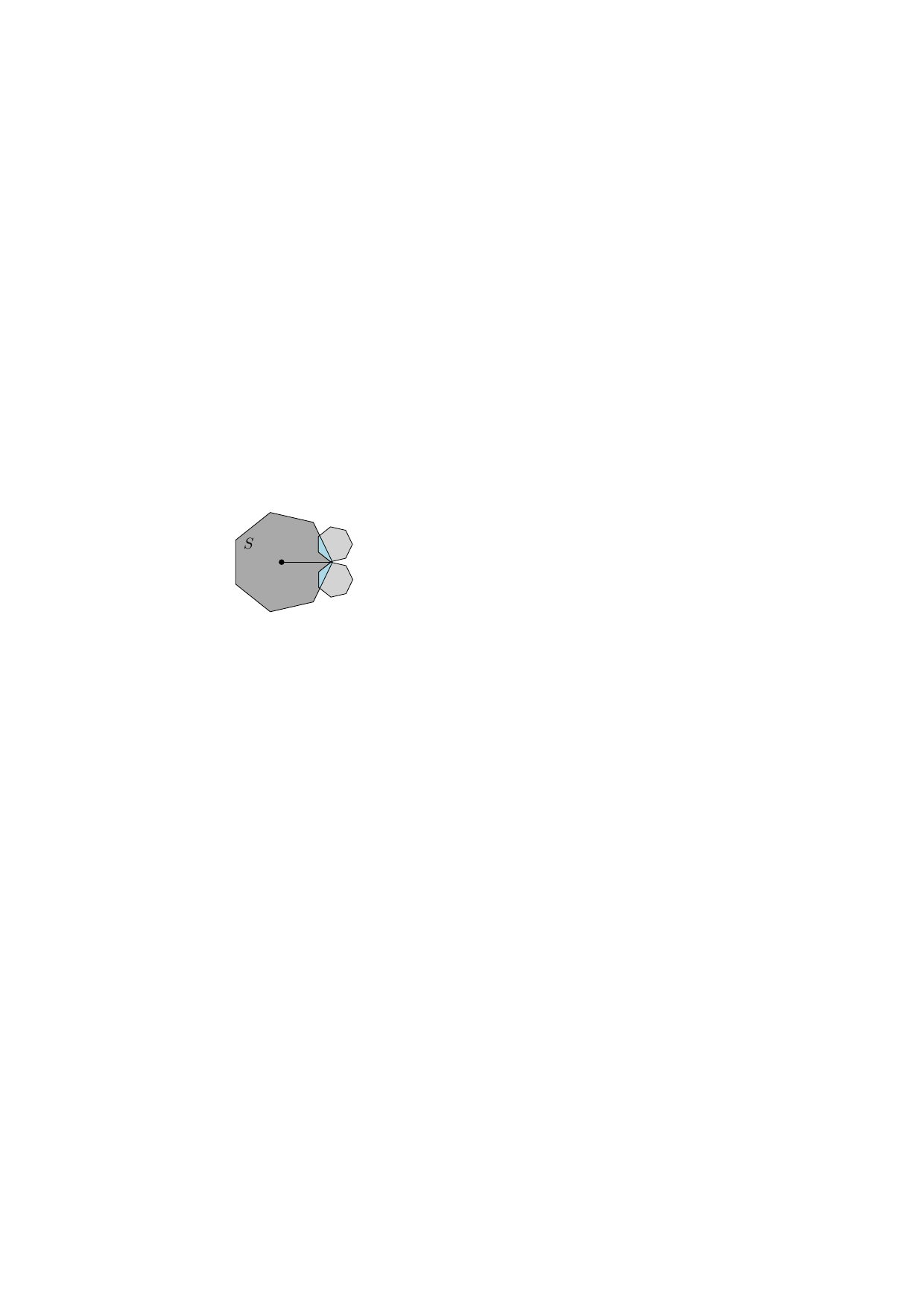}
    \caption{Two regular $ n $-gons meeting in third, $ S $, for large $ n $ and the cases $ n \equiv 0, 1, 2, 3 \mod 4 $.}
    \label{fig:snIdea}
\end{figure}

\begin{figure}
    \centering
    \includegraphics{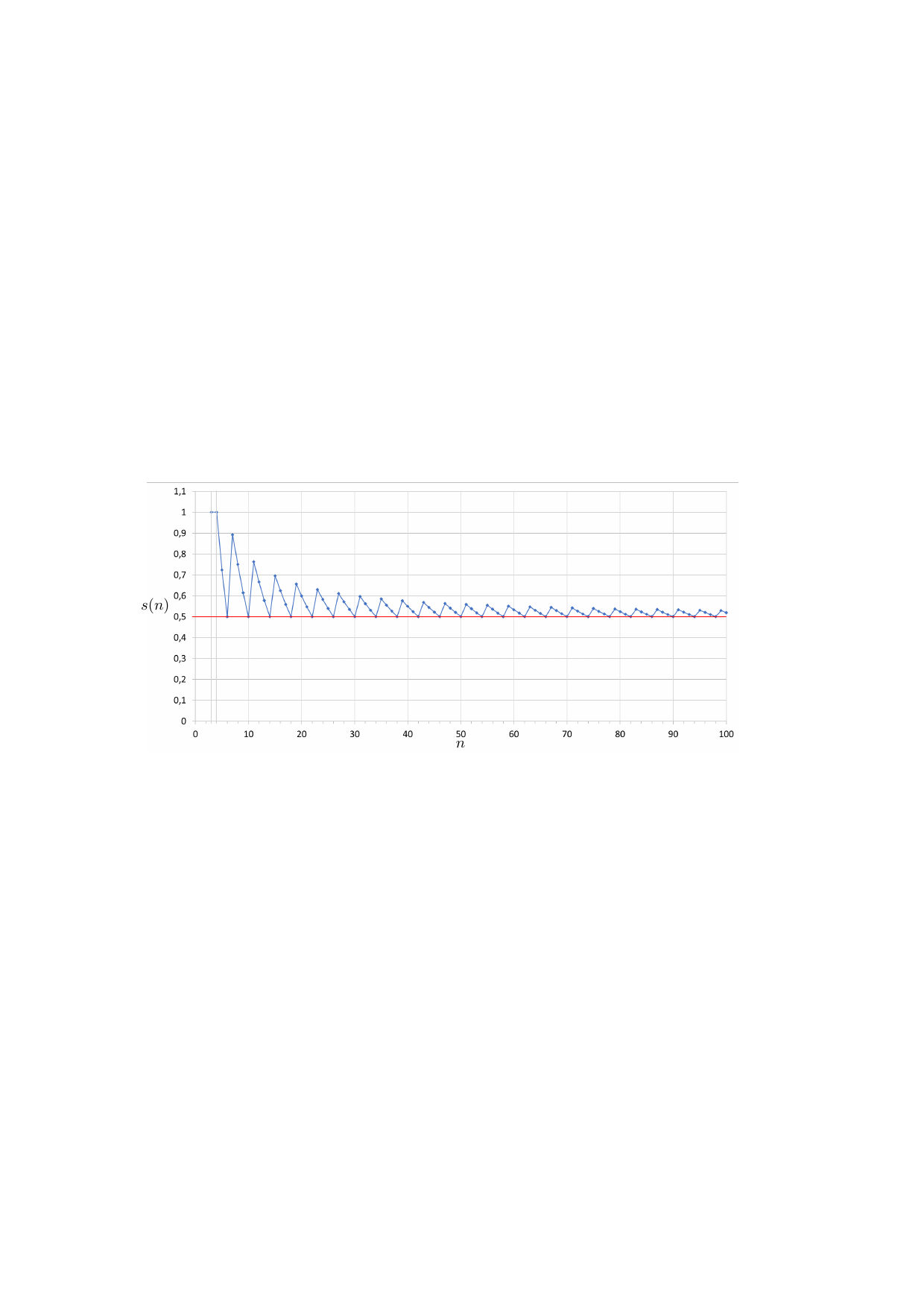}
    \caption{Values of $s(n)$ for $n = 3, \ldots, 100$.}
    \label{fig:sn-plot}
\end{figure}

\begin{restatable}{theorem}{resultskindependentcrossing}
    \label{thm:results_k-independent-crossing}
    Let $n \geq 3$, $\alpha \in [0,1]$, and $s(n)$ be defined as in~\eqref{eq:threshold-definition}.
    Then there exists a constant $k = k(n)$ such that the canonical drawings of all graphs in $\mathcal{G}(\P_n,\alpha)$ are $k$-independent crossing if and only if $\alpha \geq s(n)$.
\end{restatable}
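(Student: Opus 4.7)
The plan is to prove both directions separately, exploiting the geometric role of $s(n)$ as the exact threshold for whether two $\alpha$-free copies of $\P_n$ can meet inside a third copy without covering one of its corners.

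\emph{Direction $\alpha \geq s(n) \Rightarrow$ $k(n)$-independent crossing.}
I would fix an edge $uv$ in the canonical drawing of some $G \in \mathcal{G}(\P_n,\alpha)$ and bound how many pairwise independent edges $x_i y_i$ can cross it. A crossing edge has its polyline inside $S_{x_i} \cup S_{y_i}$, so any crossing point with the $uv$ polyline lies in $(S_u \cup S_v) \cap (S_{x_i} \cup S_{y_i})$. By symmetry assume the crossing happens inside $S_u$; then $S_{x_i}$ and $S_{y_i}$ are two $\alpha$-free copies of $\P_n$ meeting each other inside the third copy $S_u$ in a way that separates the polyline of $uv$ locally. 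The defining property of $s(n)$ says that when $\alpha \geq s(n)$ such a meeting forces $S_{x_i} \cup S_{y_i}$ to contain at least one corner of $S_u$. Since $S_u$ has $n$ corners, a fixed corner is contained in only $O(1/(1-\alpha))$ shapes by a standard $\alpha$-freeness packing argument, and the independent edges are vertex-disjoint, we obtain $k(n) = O(n/(1-\alpha))$.

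\emph{Direction $\alpha < s(n) \Rightarrow$ unbounded independent crossings.}
Here I would construct, for every $N$, a graph $G_N \in \mathcal{G}(\P_n,\alpha)$ whose canonical drawing has an edge crossed by $N$ pairwise independent edges. The starting point is precisely the contrapositive of the characterization of $s(n)$: when $\alpha < s(n)$, there is a configuration of two $\alpha$-free copies $X,Y$ of $\P_n$ meeting inside a third copy $S$ without covering any corner of $S$. Shrinking this configuration by a factor $\epsilon$ and placing $N$ disjoint scaled copies $(X_i, Y_i)$ along a thin strip straddling the boundary between two large copies $S_u, S_v$, I arrange each pair so that $X_i$ lies mostly in $S_u$, $Y_i$ lies mostly in $S_v$, and the $X_i \cap Y_i$ region meets the $uv$ polyline transversally. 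As $\epsilon \to 0$ the contribution of the small pairs to the covered area of $S_u$ and $S_v$ vanishes, so global $\alpha$-freeness is maintained, while each pair contributes an edge $x_i y_i$ that independently crosses $uv$.

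\emph{Main obstacle.}
The technical bulk will lie in the backward construction. I must carefully engineer the small pairs so as to simultaneously (i) keep every shape (including the small ones and their neighbors) $\alpha$-free, (ii) route the canonical polylines so that each $x_i y_i$ actually crosses the $uv$ polyline rather than following $S_u \cup S_v$ around the bend, and (iii) preserve mutual vertex-disjointness of the $N$ pairs, which requires the shrunk configurations not to intersect each other beyond what $\alpha$-freeness permits. The forward direction is comparatively routine once the characterization of $s(n)$ via corner coverage is formalized; the critical lemma to extract there is that on any side of the $uv$ polyline inside $S_u$, a crossing pair $S_{x_i}, S_{y_i}$ spans a cut of $S_u$ to which the $s(n)$ threshold directly applies.
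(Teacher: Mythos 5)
Your two-direction strategy matches the paper's, and the forward direction is close in spirit, but both halves have real gaps. In the forward direction, the key lemma cannot be phrased as ``two copies meeting each other inside $S_u$'': the crossing point of the two polylines lies on one of the two segments $\overline{x_i p_{x_iy_i}}$ or $\overline{y_i p_{x_iy_i}}$, so it may be that only \emph{one} of $S_{x_i},S_{y_i}$ intersects $S_u$ at all, with the contact point of the pair far away. What is actually needed (and what the paper proves) is a single-shape statement: if a copy $Y$ with $\norm{Y-S_u}/\norm{Y}\geq s(n)$ meets the radial segment $\overline{u\,p_{uv}}$, then $Y$ contains the bend point $p_{uv}$ \emph{or} a corner of $S'_u$ --- you omit the $p_{uv}$ case, and your anchor set should have $n+1$ points per half-edge. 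Your packing bound $O(1/(1-\alpha))$ also has the wrong monotonicity (it should shrink, not grow, as $\alpha\to 1$); what one really uses is a fixed constant (the paper shows $13$) valid for all $\alpha\geq\frac12$, which suffices since $s(n)\geq\frac12$.

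The backward direction contains the more serious gap. A small copy can cross the segment $\overline{u\,p_{uv}}$ while keeping at least $\alpha$ of its area free only in an arbitrarily small neighbourhood of $p_{uv}$: at any crossing point bounded away from $p_{uv}$ the shape is either swallowed by $S_u$ (zero free area) or must contain a corner of $S_u$. Hence the $N$ pairs cannot be ``placed along a thin strip straddling the boundary between $S_u$ and $S_v$''; they must all accumulate at the single point $p_{uv}$ at geometrically decreasing scales, and the paper moreover makes $S_v$ a tiny copy sitting at a \emph{corner} $q$ of $S_u$ (so $p_{uv}=q$ and both members of each pair straddle the sides of $S_u$ adjacent to $q$, one on each side of the radial segment --- not one in $S_u$ and one in $S_v$). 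The mutual-disjointness problem you flag as an obstacle is the actual crux, and it is resolved by an inductive invariant: after placing pair $i$ there remains a ball $B_{\varepsilon_i}[q]$ disjoint from all placed shapes, inside which pair $i+1$ is placed. Without this nesting argument and the correct local geometry at $p_{uv}$, the construction does not go through.
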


Crucially, \cref{thm:results_k-independent-crossing,conj:k-independent-threshold} together would give $\alpha^*(\P_n) = s(n)$, i.e., that $\alpha \geq s(n)$ is also the exact tipping point for the product structure of the class $\mathcal{G}(\P_n,\alpha)$. An overview of our results for even $n > 6$ is given in \cref{fig:AlphaInterval}.

\begin{figure}
    \centering
    \includegraphics{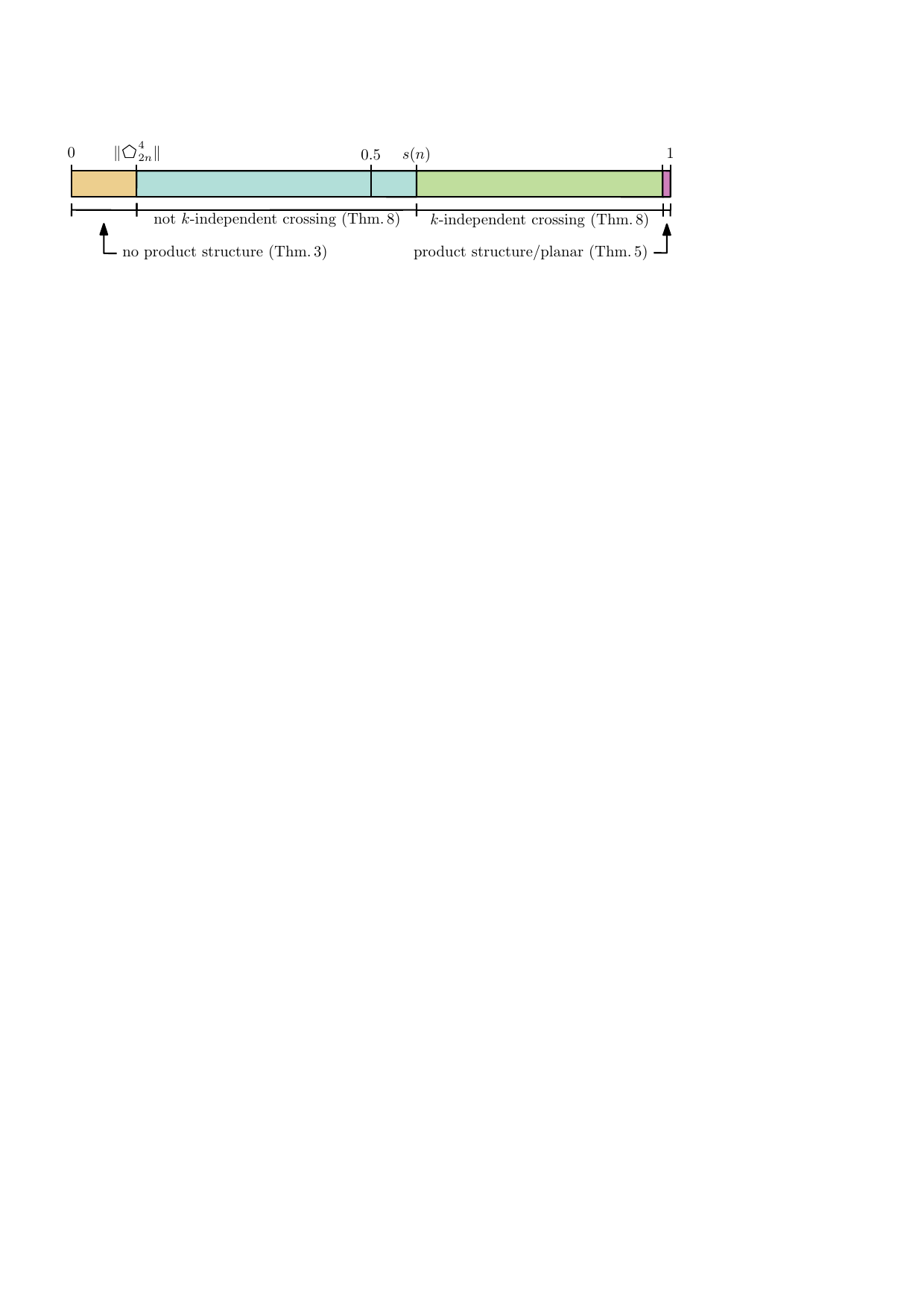}
    \caption{Our results for even $n > 6$. It holds that $\lim_{n\to\infty} s(n) = \frac12$ and $\lim_{n\to\infty} \norm{\P_{2m}^4} = 0$.}       
    \label{fig:AlphaInterval}
\end{figure}

\subparagraph*{Organization of the paper.}

After some related work below, we quickly define in \cref{sec:preliminaries} treewidth and product structure.
In \cref{sec:NoProductStructure} we prove \cref{thm:results_noProductStructureEven,thm:results_noProductStructureNonRegular} by presenting a new graph class called \emph{nested grids} with \emph{no} linear local treewidth (hence no product structure), and then constructing $\alpha$-free intersection representations of nested grids for the claimed sets~$S$.
In \cref{sec:embedding} we define and discuss the canonical drawing of an intersection graph from an $\alpha$-free intersection representation.
The canonical drawings are then used to prove \cref{thm:results_productStructure} in \cref{sec:ProductStructure} and \cref{thm:results_k-independent-crossing} in \cref{sec:ProductStructurePossible}.
We discuss conclusions in \cref{sec:conclusion}.

\subparagraph*{Related work.}

Since the introduction by Dujmovi\'{c} et al.~\cite{PlanarGraphsQueueNumber}, 
a variety of graph classes have been shown to admit product structure, including
planar graphs, graphs with bounded Euler genus $g$, apex-minor-free graphs~\cite{PlanarGraphsQueueNumber, ImprovedPlanarGraphProductStructureTheorem},
$k$-planar graphs, $k$-nearest-neighbor graphs, $(g,k)$-planar graphs, $d$-map graphs, $(g,d)$-map graphs~\cite{ProductStructurekPlanarGraphs},
$h$-framed graphs~\cite{hframedgraphs}, $(g,\delta)$-string graphs~\cite{distel2023powers,ProductStructurekPlanarGraphs},
$k$-th powers of planar graphs with bounded maximum degree~\cite{distel2023powers,ShallowMinorsUSW},
fan-planar graphs, $k$-fan-bundle graphs~\cite{ShallowMinorsUSW}, and
$K_w$-free intersection graphs of unit disks in $\mathbb{R}^2$~\cite{UnitDisks}.
In addition, product structure has been used to investigate different concepts in graphs; sometimes resolving long-standing conjectures.
This includes 
adjacency labeling schemes~\cite{ShorterLabelingSchemesforPlanarGraphs,AdjacencyLabellingforPlanarGraphsAndBeyond,SparseUniversalGraphsForPlanarity},
nonrepetitive colorings~\cite{Dujmovi__2020},
$p$-centered colorings~\cite{Dbski2021},
clustered colorings~\cite{DEMW-23,dujmović_esperet_morin_walczak_wood_2022},
vertex rankings~\cite{VertexRankingPlanarGraphs},
queue layouts~\cite{PlanarGraphsQueueNumber},
reduced bandwidth~\cite{ReducedBandwidth},
comparable box dimension~\cite{DGLTU-22},
neighborhood complexity~\cite{JR-23},
twin-width~\cite{hframedgraphs,KPS-24}, and
odd-coloring numbers~\cite{OddColouringsofGraphProducts}.

On the other hand, there are only very few results for the \emph{non}-existence of product structure.
Besides linear local treewidth (\cref{def:linear-local-treewidth}), a necessary condition for product structure is having bounded layered treewidth~\cite{Bose_2022}.
In fact, bounded layered treewidth implies linear local treewidth~\cite{DUJMOVIC2017111}, making the former the stronger condition.
For proper minor-closed graph classes, both linear local treewidth and bounded layered treewidth are also sufficient conditions for product structure~\cite{PlanarGraphsQueueNumber}.
However, this does not hold for general graph classes as some graph classes with bounded layered treewidth admit no product structure~\cite{Bose_2022}.


\section{Preliminaries}
\label{sec:preliminaries}

\subparagraph*{Treewidth.}

Treewidth is a graph parameter first introduced by Robertson and Seymour~\cite{ROBERTSON1986309} measuring the similarity of a graph to a tree.
Let us define the edge-maximal graphs of treewidth $t$:
For an integer $t \geq 0$, a \emph{$t$-tree} is a graph $H$ that is either $K_{t+1}$, or obtained from a smaller $t$-tree $H'$ by adding one new vertex $v$ with neighborhood $N(v) \subset V(H')$ that induces a clique of size $t$ in $H'$.
Now, the \emph{treewidth} of a graph $G$, denoted as $\tw(G)$, is the minimum $t$ such that $G \subseteq H$ for some $t$-tree $H$.





\subparagraph*{Strong product of graphs.}

The \emph{strong product of graphs} is a combination of the Cartesian product of graphs and the tensor product of graphs.
The vertex-set of the strong product $G \boxtimes H$ of two graphs $G$ and $H$ is defined as $V(G) \times V(H)$.
The edge-set is the union of the edges in the Cartesian and the tensor product of $G$ and $H$.
That is, there is an edge in $G \boxtimes H$ between two vertices $(u,u'),(v,v') \in V(G\boxtimes H)$ if and only if
\[
    u=v,\, u'v' \in E(H) \qquad \text{or} \qquad u'=v',\, uv \in E(G) \qquad \text{or} \qquad uv \in E(G),\, u'v' \in E(H).
\]
\section{Intersection graphs without product structure}
\label{sec:NoProductStructure}

In this section we prove that, for some $\alpha \in [0,1]$ and some $S \subseteq \mathbb{R}^2$, the class $\mathcal{G}(S,\alpha)$ of all intersection graphs of $\alpha$-free homothetic copies of $S$ does not admit product structure.
In particular, we consider for $S$ regular $2n$-gons in \cref{sec:N46NoProductStruture} and irregular $n$-gons in \cref{sec:N3nNoProductStruture}.
Both cases rely on the same general construction, which we describe first in \cref{sec:gridConstruction}.


\subsection{Nested grids}
\label{sec:gridConstruction}

We aim to construct a graph class $\mathcal{G}$ that does \emph{not} have linear local treewidth (cf.~\cref{def:linear-local-treewidth}).
Then, by~\cite[Lemma 6]{DUJMOVIC2017111,PlanarGraphsQueueNumber}, $\mathcal{G}$ admits no product structure.
Note that if $\mathcal{G}$ has linear local treewidth, for each graph $G = (V,E) \in \mathcal{G}$ its treewidth $\tw(G)$ is linearly bounded by its radius $\rad(G) = \min_{v \in V} \min\{ k \mid N^k[v] = V\}$.
We now aim to construct a sequence $G_1,G_2,\ldots$ of graphs with $\rad(G_k) \in O(k)$ but $\tw(G_k) \in \Omega(k^2)$, i.e., where the treewidth is not linear in the radius.
Here, we give a general description of $G_k$, $k \geq 1$, which is then completed in detail depending on the particular polygon~$S$ in \cref{sec:N46NoProductStruture,sec:N3nNoProductStruture}.

\begin{description}
    \item[Step 1: Large grid to ensure small radius.]
    
    We start with a $(k+1) \times (k+1)$-grid with each edge subdivided twice, called the \emph{large grid}.
    For an intersection representation, we use $(k+1)^2$ large homothets of $S$, denoted $c_{i,j}$ with $i,j \in [k+1]$, in a grid pattern representing the grid-vertices, and $4k^2$ smaller homothets of $S$, called the \emph{subdivision shapes}, for the subdivision-vertices.
    The exact placement is chosen such that the subdivision shapes meet at their corners so that the resulting intersection graph is the desired subdivided grid as shown in \cref{fig:SubdividedGrid}.
    Note that we do not require that the shapes $c_{i,j}$ have only a point contact with the subdivision shapes.
    We refer to this graph as $G_{k,1}$ and to the areas that are bounded by exactly twelve shapes as \emph{cells}.

    \begin{figure}[ht]
        \includegraphics{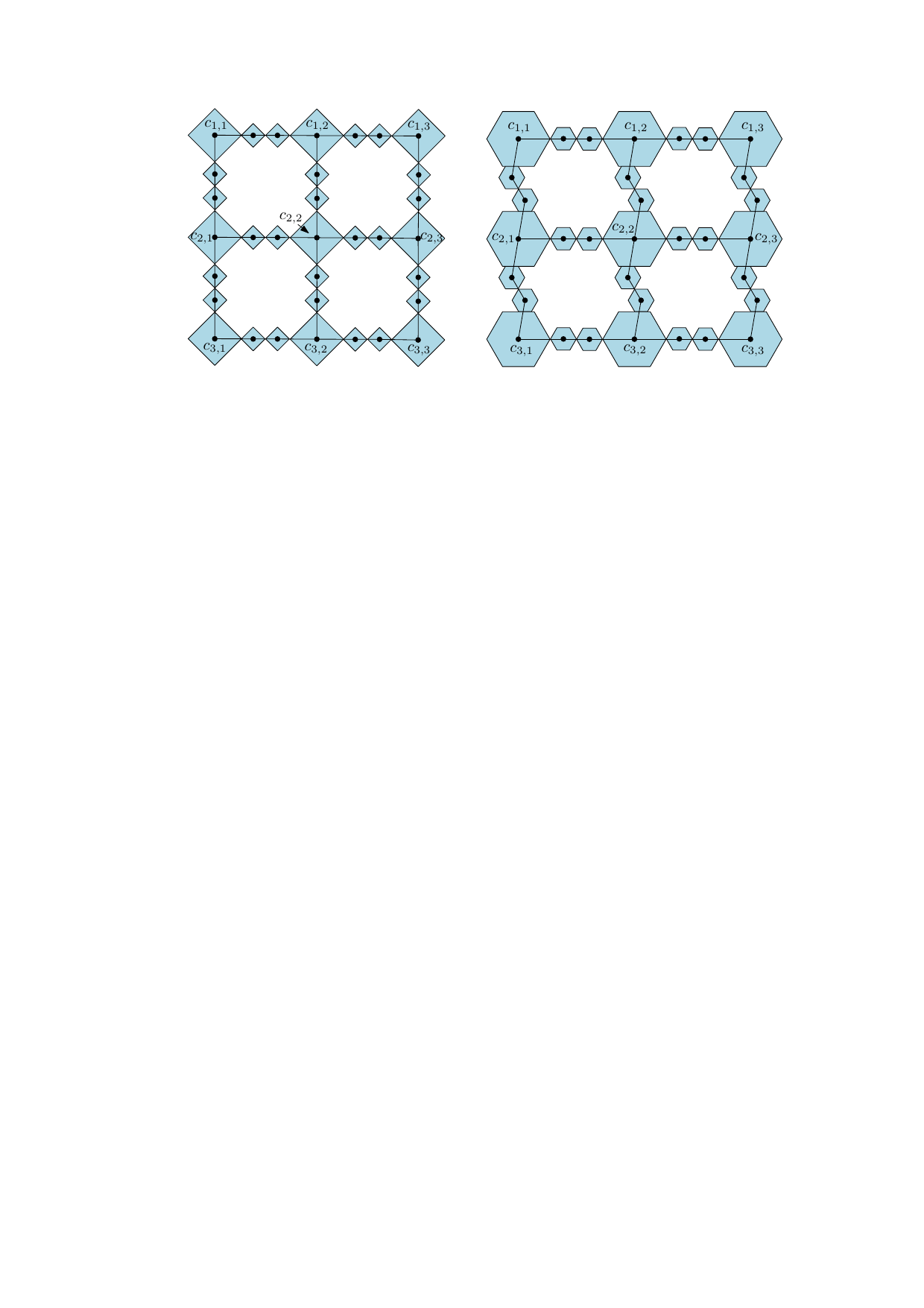}
        \caption{Examples of the large grid with homothetic squares or hexagons, as described in Step~1.}
        \label{fig:SubdividedSquareGrid}
        \label{fig:SubdividedHexagonGrid}
        \label{fig:SubdividedGrid}
        \end{figure}

    \item[Step 2: Small grids inside cells.] 

    Next, we insert a $k\times k$-grid into each cell such that these \emph{small grids} do not touch the large grid from Step~1.
    We denote the union of $G_{k,1}$ and all small grids by $G_{k,2}$.
    In the next step, we connect the small grids so that together they form a grid of size $k^2 \times k^2$ plus some additional edges and subdivisions, yielding quadratic treewidth.
    To do so, we also specify the placement of the small grid inside the cell more precisely in the next step.
    The large grid ensures that the radius of the resulting graphs is linear in $k$.

    \item[Step 3: Connecting the small grids to ensure large treewidth.]

    We connect any two small grids in neighboring cells of $G_{k,1}$ by adding $k$ pairwise disjoint paths, called \emph{connecting paths}, as illustrated in \Cref{fig:SubdividedSquareGridWithPaths}. 
    Each set of connecting paths crosses the subdivided edge of $G_{k,1}$ at the contact point of the two corresponding subdivision shapes.
    To realize these crossings, we must ensure that a contact point of two same-sized homothets of $S$ can be crossed by $k$ independent edges, while keeping all shapes $\alpha$-free.
    Furthermore, we connect the endpoints of these $k$ independent edges to their corresponding small grid with pairwise disjoint paths, while keeping the radius small, that is in $O(k)$.
    
    \begin{figure}
        \centering
        \includegraphics{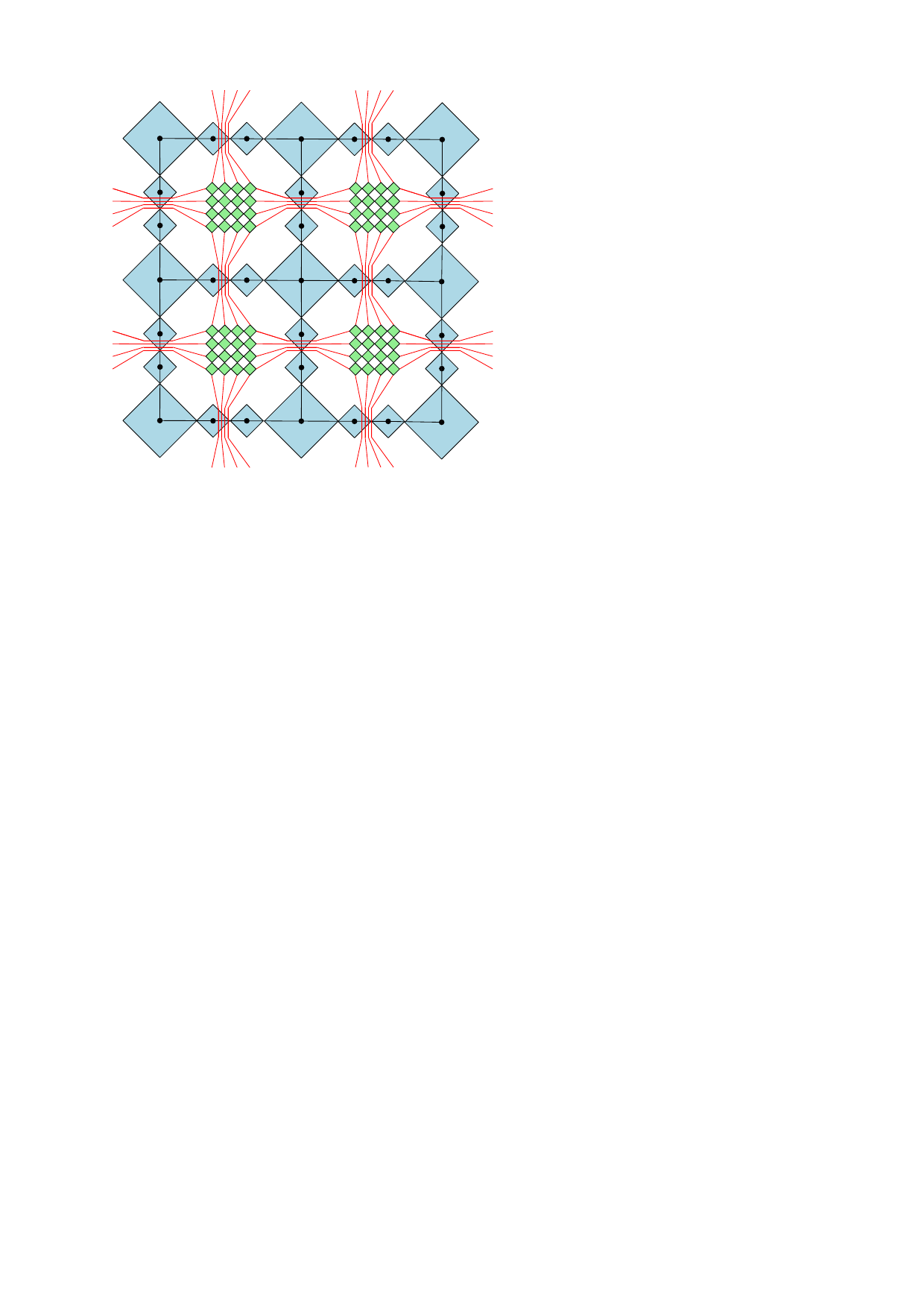}
        \caption{Parts of a large grid (blue) with small grids in its cells (green) connected by paths (red)}        
        \label{fig:SubdividedSquareGridWithPaths}
    \end{figure}
    
    Leaving the shape-specific details of the crossings to \cref{sec:N3nNoProductStruture,sec:N46NoProductStruture}, we now show how to achieve the linear radius.
    As $G_{k,1}$ has radius $\mathcal{O}(k)$, it suffices to show that every vertex we add in a cell has distance $\mathcal{O}(k)$ to some vertex of $G_{k,1}$.
    We achieve this by placing the connecting paths inside a cell within a narrow corridor very close to the border of the cell (\cref{fig:RectangleSmallGrid}).
    Such a corridor and connecting paths along the border of a cell can always be constructed using $1$-free homothets of $S$ of very small size.
    We start with a set $ \mathcal{P}_t $ of $ k $ paths connecting the top of the cell with the small grid, which is placed near the top of the cell for this purpose.
    All further connecting paths in the same cell can be placed iteratively by going along the new boundary.
    Note that the small grid is not necessarily placed in the center of the cell as the exact geometry inside the cell is shape-specific and the center might not be reachable while keeping the radius small.
    As we create $4k$ paths per cell, every vertex on the paths has distance $O(k)$ from the grid $G_{k,1}$. 
    In addition, the small grids are placed such that they touch the end of the constructed paths, yielding a distance of $O(k)$ for every vertex in each cell.

    \begin{figure}[htp] 
        \centering
        \includegraphics[width=\textwidth]{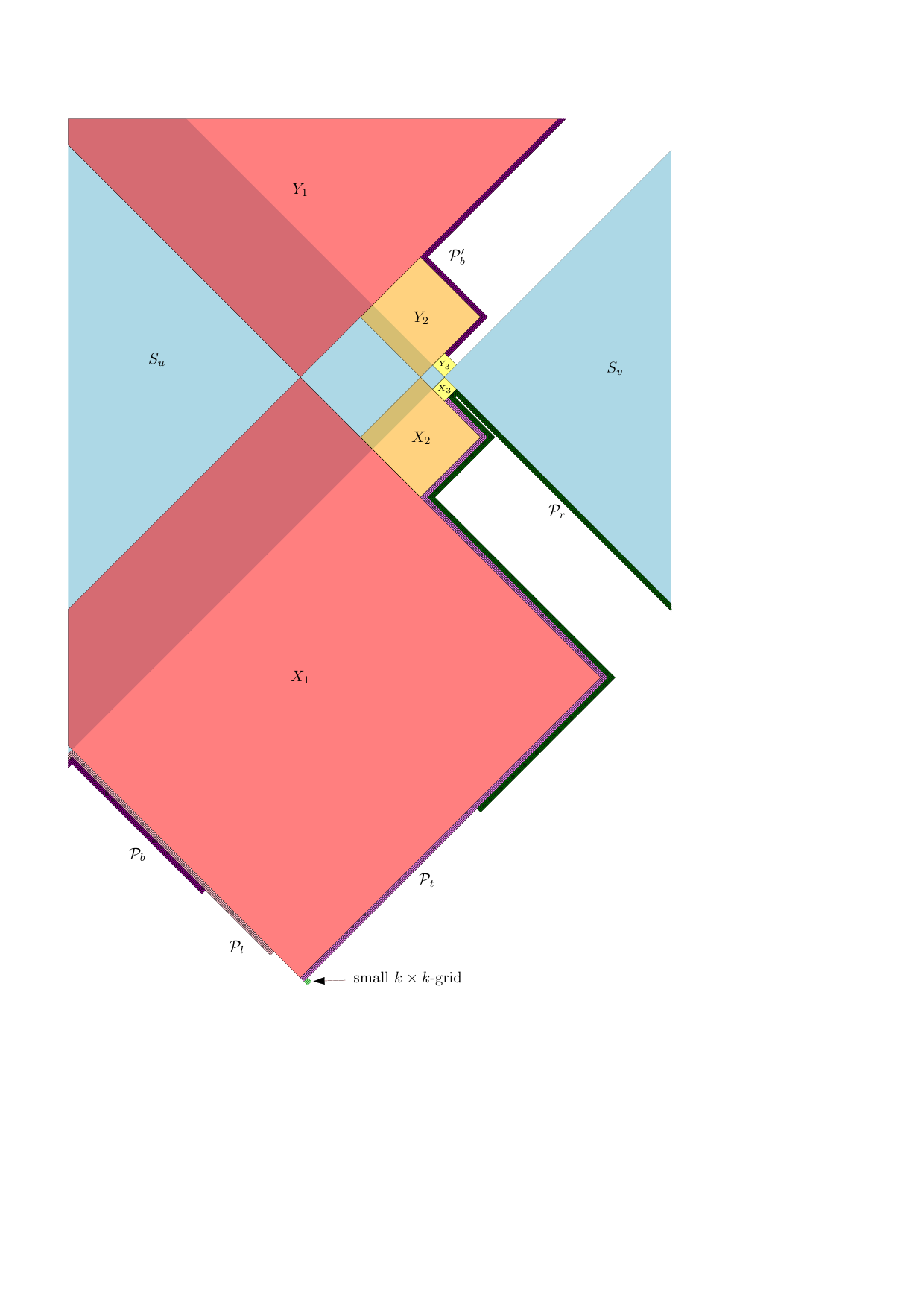}
        \caption{
            Example of how the paths connecting the grids can be constructed.
            $S_u$ and $S_v$ are subdivision shapes and
            $X_1, Y_1, X_2, Y_2 $, and $ X_3, Y_3$ realize the independent edges crossing the contact point of $ S_u $ and $ S_v $ required in Step 3.
            $ \mathcal{P}_t $ connects the top of the cell, i.e.\ $ X_1, X_2, X_3 $, with the small grid.
            $ \mathcal{P}_l, \mathcal{P}_b, \mathcal{P}_r $ connect the left, bottom, and right side of the cell to the small grid and are shortened for improved readability.
            Note that the latter three sets of paths are not symmetric but walk along the boundary of the cell in order to reach the small grid while keeping the radius small.
        }
        \label{fig:RectangleSmallGrid}
    \end{figure}
\end{description}

\subsection{Regular \texorpdfstring{$\bm{2n}$}{2n}-gons}
\label{sec:N46NoProductStruture}





In this section we prove \cref{thm:results_noProductStructureEven} by giving for every $n \geq 2$ an explicit $\alpha < 1$ such that intersection graphs of $\alpha$-free homothetic copies of $\P_{2n}$ admit no product structure.
Recall that $\norm{\P_{2n}^m} \in [0,1]$ denotes the portion of the area of $\P_{2n}$ within a segment with $m$ corners.

\begin{theorem} \label{thm:Regular2NGons}
    For every $n \geq 2$ and $\alpha < \norm{\P_{2n}^4}$, the class of all intersection graphs of $\alpha$-free homothetic regular $2n$-gons does not admit product structure.
\end{theorem}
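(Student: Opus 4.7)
We apply the nested grids framework from~\cref{sec:gridConstruction} with $S = \P_{2n}$: for every $k \geq 1$, we construct an $\alpha$-free intersection representation by homothetic copies of $\P_{2n}$ of a graph $G_k$ having $\rad(G_k) = \mathcal{O}(k)$ and $\tw(G_k) = \Omega(k^2)$. Since product structure implies linear local treewidth, the existence of such a sequence witnesses that $\mathcal{G}(\P_{2n},\alpha)$ does not admit product structure for any $\alpha < \norm{\P_{2n}^4}$.

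The key geometric idea in Step~1 is to use large vertex-shapes $c_{i,j}$ that substantially overlap their adjacent subdivision shapes. Exploiting that $\P_{2n}$ has $n$ pairs of parallel opposite sides, we align identically oriented homothets $c_{i,j}$ at the grid positions so that each adjacent subdivision shape $S_u$ is covered by $c_{i,j}$ except for a $4$-corner tip near the contact corner $p$ with the next subdivision shape $S_v$. This leaves free on $S_u$ exactly a region of area $\norm{\P_{2n}^4}\cdot\norm{S_u}$. Step~2 applies the same building block at a smaller scale inside each cell of the large grid.

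Step~3 is the main obstacle and the source of the threshold $\norm{\P_{2n}^4}$. At the shared corner $p$ of two adjacent subdivision shapes $S_u$ and $S_v$, we must realize $k$ pairwise independent edges $X_1Y_1,\dots,X_kY_k$ crossing $p$ by inserting tiny homothets $X_i, Y_i$ of $\P_{2n}$ with $X_i\cap Y_i\neq\emptyset$, disjoint for different $i$, and each of $X_i, Y_i, S_u, S_v$ remaining $\alpha$-free. We place the $X_i$ in the free $4$-corner tip of $S_u$ around $p$ and the $Y_i$ symmetrically in the tip of $S_v$. By taking the $X_i, Y_i$ small enough, their total additional coverage of $S_u$ (and $S_v$) is an arbitrarily small $\epsilon>0$, so the free area of $S_u$ stays above $(\norm{\P_{2n}^4}-\epsilon)\cdot\norm{S_u}$ and any $\alpha<\norm{\P_{2n}^4}$ is admissible. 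The hard part is to verify that $k$ crossings can be packed pairwise-disjointly into these tips for every $k$, while each $X_i, Y_i$ itself remains $\alpha$-free: the crossings must shrink with $k$, and one has to exhibit a valid interleaving in which each $X_i$ meets $S_u$ (but not $S_v$) and each $Y_i$ meets $S_v$ (but not $S_u$), with $X_i\cap Y_i$ straddling $p$.

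To finish, we wire the $X_i$ and $Y_i$ to the small grids via the corridor construction of \cref{fig:RectangleSmallGrid}, using arbitrarily small $1$-free (hence $\alpha$-free) homothets of $\P_{2n}$ along the cell boundaries. This keeps every vertex inside a cell at distance $\mathcal{O}(k)$ from the large grid, so $\rad(G_k)=\mathcal{O}(k)$; together with $\tw(G_k)=\Omega(k^2)$, which follows from the $k^2\times k^2$-subdivided grid formed by the small grids and the connecting paths, this yields the sequence of bad graphs $G_k$ and completes the proof.
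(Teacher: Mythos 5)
Your overall strategy (nested grids with radius $\mathcal{O}(k)$ and treewidth $\Omega(k^2)$, realized by $\alpha$-free homothets of $\P_{2n}$) matches the paper's, but there is a genuine gap exactly where you flag ``the hard part'': you never establish that the crossing shapes $X_i,Y_i$ themselves can be kept $\alpha$-free for every $\alpha<\norm{\P_{2n}^4}$, and this is precisely where the threshold value comes from. The bottleneck is \emph{not} the free area of the subdivision shapes $S_u,S_v$ --- Step~1 does not, and need not, have the large shapes cover them up to a $4$-corner tip as you propose; they remain almost entirely free. Rather, each crossing shape must pass from one cell to the adjacent one through the corner-contact of $S_u$ and $S_v$, and since locally the only way across is through the interior of $S_u$ (or of $S_v$), a crossing $2n$-gon necessarily has all but a segment of at most four consecutive corners of its area swallowed by $S_u$; its free fraction is therefore at most, and in the construction arbitrarily close to, $\norm{\P_{2n}^4}$. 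Concretely, the paper places both $X_i$ and $Y_i$ mostly \emph{inside} $S_u$ (neither meets $S_v$), sharing a side and meeting at a point $p$ on the segment from the center of $S_u$ to the contact corner $q$, with two corners of each strictly outside $S_u$ and two more $\epsilon'$-close to its boundary; letting $\epsilon'\to 0$ pushes their free fraction toward $\norm{\P_{2n}^4}$ from below, and a shrinking ball $B_{\epsilon_i}[q]$ keeps the $k$ pairs pairwise disjoint.

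Your proposed configuration --- $X_i$ meets $S_u$ but not $S_v$, $Y_i$ meets $S_v$ but not $S_u$, with $X_i\cap Y_i$ ``straddling'' the contact point --- is not obviously realizable and is in any case unquantified: since $S_u\cap S_v$ is a single point, $X_i\cap Y_i$ must lie in one of the two free wedges at the contact, so one of the two shapes must still traverse $S_u$ or $S_v$ to get from one cell to the other, and you must compute how much of that shape is then covered; moreover, a shape placed ``in the free tip of $S_u$'' would be entirely covered by $S_u$ and hence $0$-free. Without this computation the appearance of the specific bound $\norm{\P_{2n}^4}$ is unexplained and the proof is incomplete. (Your accounting for the $\alpha$-freeness of $S_u$ and $S_v$ under the tiny additional coverage, and the wiring of the crossings to the small grids via boundary corridors, are fine but are not the binding constraints.)
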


Hence, as $\norm{\P_4^4}=1$ and $\norm{\P_6^4}=\frac12$, regular $\alpha$-free squares do not admit product structure for any $\alpha < 1$, and regular $\alpha$-free hexagons do not admit product structure for any $\alpha < \frac12$.

\begin{proof}
    We prove the theorem by using the construction described in \Cref{sec:gridConstruction}.
    Thus, we need to show that all three steps of the construction are feasible using $\alpha$-free regular $n$-gons.
    Constructing the grid $G_{k,1}$ described in Step~1 using regular $n$-gons is clearly possible for all $n$, e.g., see \Cref{fig:SubdividedGrid} for $n=4$ and $n=6$.
    For Step~2 place a $k \times k$-grid inside each cell of $G_{k,1}$ yielding $G_{k,2}$.
    The main challenge is to show that Step~3 of the construction is feasible. 
    
    In Step~3 we connect the small grids to together contain a $k^2\times k^2$-grid subdivision. 
    Let $S_u, S_v$ be two adjacent subdivision shapes in the grid $G_{k,1}$ and recall that they meet at a corner.
    We aim to construct $k$ pairwise disjoint paths crossing the $S_u$-$S_v$-contact with contact point $q$, see \cref{fig:CrossingHexagons}~left.
    We thereby ensure that the inserted shapes do not intersect any shapes other than $ S_u $ and $ S_v $.
    \begin{figure}%
        \centering%
        \includegraphics[width=\textwidth]{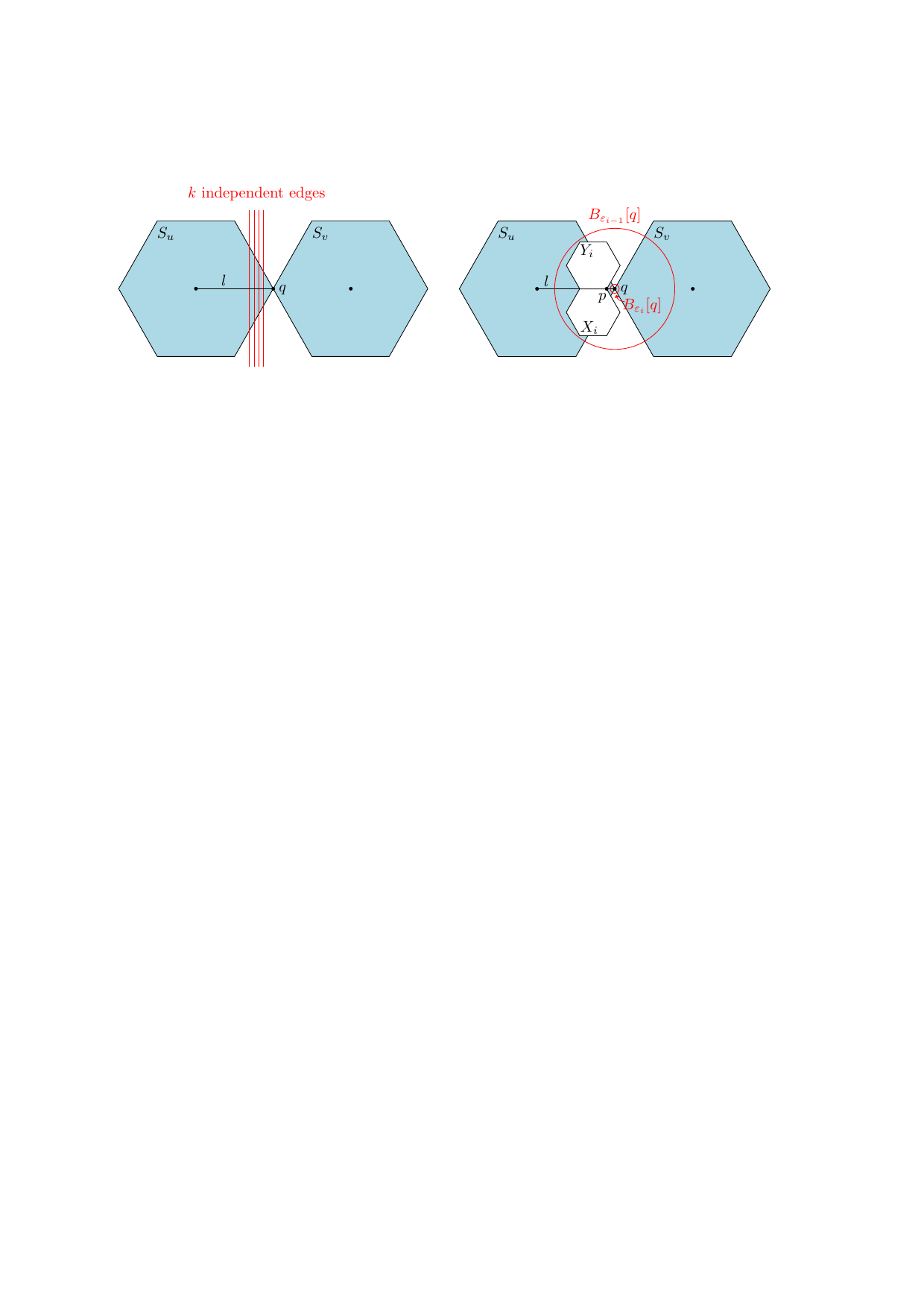}%
        \caption{%
            Left: We aim to cross two hexagons $U,V$ with $k$ independent edges.
            Right: Two hexagons $X_i,Y_i$ inside the ball $B_{\epsilon_{i-1}}[q]$ with $c^{X_i}_{n}=p=c^{Y_i}_2$ that cross the line segment $l$ from the center of $S_u$ to $q$. In the next iteration, the hexagons $ X_{i+1}, Y_{i+1} $ are placed inside $B_{\epsilon_{i}}[q]$.
        }%
        \label{fig:CrossingHexagons}%
        \label{fig:CrossingHexagons2}%
    \end{figure}
    We iteratively place $n$-gons $X_1, Y_1, \dots , X_k, Y_k$ such that after iteration $i$ with $i \in [k]$, the $n$-gons $X_1Y_1, \dots , X_iY_i$ form $i$ independent edges crossing the line segment $l$ from the center of $S_u$ to $q$. 
    Additionally, after iteration $i$ there is an $ \epsilon_i $-ball $B_{\varepsilon_i}[q]$  around $q$ for some $\epsilon_i > 0$ such that $B_{\varepsilon_i}[q]$ does not intersect any $X_j$ or $Y_j$ for $j \leq i$. 
    For ease of presentation, let $ B_{\epsilon_0}[q] $ be a ball around $ q $ that only intersects $ S_u $ and~$ S_v $. 

    In iteration $i$ we consider the ball $B_{\varepsilon_{i-1}}[q]$ that does not intersect any $n$-gons $X_j, Y_j$ placed before. Let the corners of an $n$-gon $S_z$ be $c_1^z, \dots, c_n^z$ in clockwise order. Without loss of generality, let $q$ be the corner $c^u_1$ of $S_u$. 
    Let $c^{X_i}_1$ be the corresponding corner of the $n$-gon $X_i$ and $c^{Y_i}_1$ be the corresponding corner of $Y_i$. 
    Further let $p$ be a point close to $ q $ on $l$ with $\epsilon' $ distance from $q$, where $ 0 < \epsilon' < \epsilon_{i-1}$. 
    We place $X_i, Y_i$ inside $B_{\varepsilon_{i-1}}[q]$ such that they share a side and meet with a corner at $ p $, i.e., such that $c^{X_i}_{n}=p=c^{Y_i}_2$ as shown in \cref{fig:CrossingHexagons2}~right. 

    
    Note that two corners of $X_i$ and $Y_i$, respectively, are placed outside of $S_u$ and the two shapes do not intersect $ S_v $.
    In addition, two corners are placed $ \epsilon'$-close to the border of $S_u$. 
    Thus, for $\epsilon'$ small enough $X_i$ and $Y_i$ have arbitrarily less than $\norm{\P_{2n}^{4}}$ area disjoint from $S_u$.
    As $ X_i $ and  $ Y_i $ do not intersect any other shapes than $ S_u $, we can choose $ \epsilon' $ sufficiently small such that $ X_i $ and $ Y_i $ are $ \alpha $-free for every $ \alpha < \norm{\P_{2n}^{4}}$.
    Further observe that as $ \epsilon' > 0 $, after placing $X_i$ and $Y_i$ there still exists a ball $B_{\varepsilon_i}[q]$ with positive radius $ \epsilon_i $ around $q$ that does not intersect any $X_j$ or $Y_j$ for $j \leq i$. 
    Thus, the invariants hold and we can continue placing $n$-gons. 
    After $k$ iterations we have $k$ independent edges crossing $S_u$ and $S_v$, as required.

    Finally, the edges $X_1Y_1, \dots , X_iY_i$ can be connected to the small grids as explained in Step 3 of the construction while keeping the radius in $\mathcal{O}(k)$. 
    Thus, we have constructed a graph class that does not have linear local treewidth, which rules out product structure.
\end{proof}

\subsection{Triangles and irregular \texorpdfstring{$\bm{n}$}{n}-gons}
\label{sec:N3nNoProductStruture}


This section is devoted to \cref{thm:results_noProductStructureNonRegular}, which states that for every $n \geq 3 $ there is a (possibly non-regular) $n$-gon $ S $ such that $\alpha$-free intersection graphs of shapes homothetic to $ S $ do not admit product structure for any $\alpha < 1$.
As all triangles are affinely equivalent, we conclude:

\begin{corollary}
    The graph class of $\alpha$-free intersection graphs of homothetic triangles does not admit product structure for any $\alpha < 1$.
\end{corollary}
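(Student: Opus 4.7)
The plan is to deduce the corollary from \cref{thm:results_noProductStructureNonRegular} (applied to $n = 3$) by pushing an intersection representation through an affine map. Concretely, \cref{thm:results_noProductStructureNonRegular} produces some specific triangle $S_0$ such that $\mathcal{G}(S_0,\alpha)$ admits no product structure for every $\alpha < 1$. Since any two triangles are affinely equivalent, for an arbitrary triangle $T$ there exists an invertible affine map $\phi(x) = Ax + b$ with $\phi(S_0) = T$. My strategy is to show that $\phi$ transports any $\alpha$-free intersection representation using homothets of $S_0$ into an $\alpha$-free intersection representation of the \emph{same} graph using homothets of $T$, so that $\mathcal{G}(S_0,\alpha) \subseteq \mathcal{G}(T,\alpha)$, whence the lack of product structure is inherited.

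The first technical step will be the brief check that $\phi$ maps the family of homothets of $S_0$ to the family of homothets of $T$. If $S = \lambda S_0 + t$ is a homothet of $S_0$, a direct computation gives $\phi(S) = \lambda\, \phi(S_0) + (At + (1-\lambda)b) = \lambda T + t'$, which is a homothet of $T$ with the same scale factor $\lambda$. In particular, the collection $\{\phi(S_v)\}_{v \in V}$ is a collection of homothets of $T$ whose intersection graph is identical to that of $\{S_v\}_{v \in V}$ because $\phi$ is a bijection on $\mathbb{R}^2$.

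The second step is to verify that the $\alpha$-free condition is preserved. Since $\phi$ is affine, for any measurable set $U \subseteq \mathbb{R}^2$ we have $\norm{\phi(U)} = \lvert\det A\rvert \cdot \norm{U}$. Applying this to both sides of the inequality in \cref{def:alpha-free}, the factor $\lvert\det A\rvert$ cancels and the $\alpha$-free inequality for $\{S_v\}_v$ transfers verbatim to $\{\phi(S_v)\}_v$. Combining the two steps, every $G \in \mathcal{G}(S_0, \alpha)$ lies in $\mathcal{G}(T, \alpha)$, and since by \cref{thm:results_noProductStructureNonRegular} the class $\mathcal{G}(S_0, \alpha)$ contains the nested grids and thus has no product structure for $\alpha < 1$, neither does $\mathcal{G}(T, \alpha)$.

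There is no real obstacle here; the only thing to be careful about is to confirm that homothety (scaling plus translation, \emph{without} rotation) is compatible with a general affine map in the way stated, which the explicit computation above settles. The result then follows uniformly for every triangle $T$, establishing the corollary.
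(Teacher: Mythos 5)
Your proposal is correct and follows exactly the route the paper intends: the paper derives the corollary from \cref{thm:results_noProductStructureNonRegular} with the one-line remark ``as all triangles are affinely equivalent,'' and you simply fill in the details of that remark — that an affine bijection sends homothets to homothets with the same scale factor, preserves the intersection graph, and preserves the $\alpha$-free condition since all areas are multiplied by the common factor $\lvert\det A\rvert$. No gap; your write-up is just a more explicit version of the paper's argument.
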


The following \lcnamecref{lem:nonRegularNGons} specifies the shapes we use and immediately implies \cref{thm:results_noProductStructureNonRegular}.
We refer to 
\cref{fig:SubdividedTriangleGrid} 
for examples of shapes that satisfy the required properties.
The main difference to \cref{sec:N46NoProductStruture} is how we implement crossings, which is shown in \cref{fig:CrossingTriangles_shortVersion}.

\begin{figure}
		\centering
        \includegraphics[page=2]{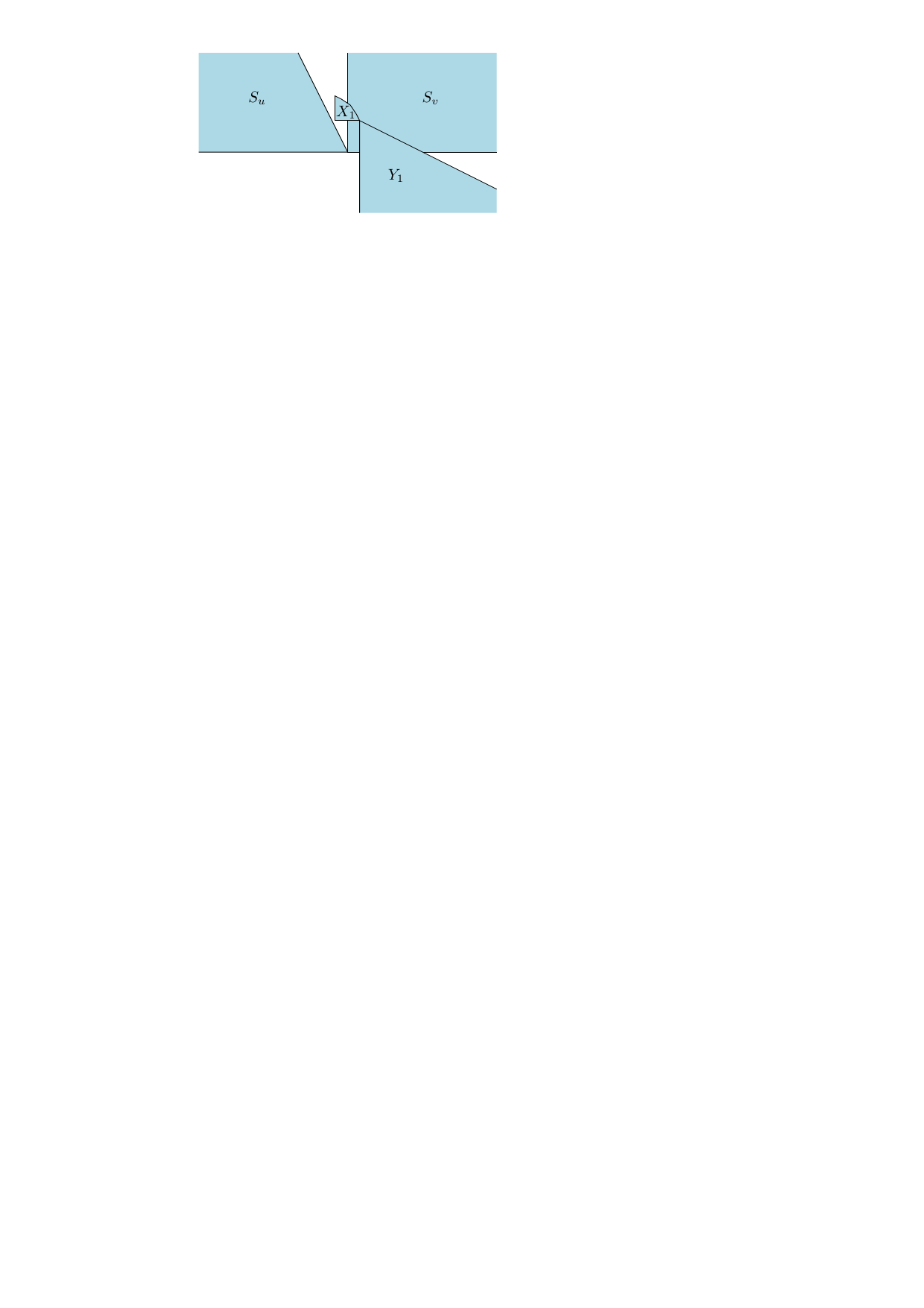}
        \caption{%
            Two shapes $S_u, S_v$ that we cross with two independent edges $ x_1 y_1 $, $ x_2 y_2 $.
            Note that to the bottom-left of $ X_2 $ and $ Y_2 $, there is space for more crossing edges.
        }
        \label{fig:CrossingTriangles_shortVersion}
    \end{figure}

\begin{restatable}{lemma}{nonRegularNGons}
    \label{thm:nonRegularNGons}\label{lem:nonRegularNGons}
    Let $S$ be a convex shape with two orthogonal adjacent sides $l(S),b(S)$ such that $S$ is contained in the rectangle spanned by $l(S)$ and $b(S)$ and no sides of $S$ are parallel to $l(S)$ or $b(S)$. Then, for no $\alpha  < 1$ does the class of all $\alpha$-free intersection graphs of shapes homothetic to $S$ admit product structure.
\end{restatable}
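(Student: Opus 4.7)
The plan is to instantiate the nested-grids framework of \cref{sec:gridConstruction} with homothets of the given shape $S$. Once Steps~1--3 of that construction can be realized with $\alpha$-free homothets of $S$, the resulting graphs $G_k$ satisfy $\rad(G_k)\in\mathcal{O}(k)$ but $\tw(G_k)\in\Omega(k^2)$, and hence the class has no linear local treewidth and admits no product structure by \cite[Lemma~6]{DUJMOVIC2017111,PlanarGraphsQueueNumber}.

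Steps~1 and~2 are routine under our hypothesis on $S$: because $S$ is convex and lies inside the axis-aligned rectangle spanned by $l(S)$ and $b(S)$, translated scaled copies of $S$ can be packed into a subdivided grid of the type required in Step~1 (cf.\ \cref{fig:SubdividedTriangleGrid}), with subdivision shapes meeting in single corners along the grid edges; likewise a $k\times k$-grid of tiny $1$-free homothets of $S$ fits inside every cell. The corridor-based connection of the small grids in Step~3 also carries over verbatim from \cref{sec:gridConstruction}, so the only new ingredient is to realize, at every contact point $q$ of two adjacent subdivision shapes $S_u, S_v$, the required $k$ independent crossing edges using $\alpha$-free homothets of $S$.

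I would construct these crossings by the same iterative scheme used for regular $2n$-gons in the proof of \cref{thm:Regular2NGons}: in iteration~$i$ pick a ball $B_{\epsilon_{i-1}}[q]$ meeting no previously placed crossing shape, and inside it place two small homothets $X_i, Y_i$ of $S$ near $q$ whose intersection-graph edge $X_iY_i$ crosses the segment from the center of $S_u$ to $q$. The main obstacle, and the place where the hypothesis on $S$ is essential, is the geometric claim that for \emph{any} $\alpha < 1$ the shapes $X_i, Y_i$ can be chosen so as to be $\alpha$-free. I would orient each $X_i$ so that its own orthogonal sides $l(X_i), b(X_i)$ lie along the corresponding orthogonal sides of $S_u$ meeting at $q$, and push $X_i$ into $S_u$ only up to a tiny depth $\delta$ with $\delta \ll \epsilon_i$. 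Because $S$ has no side parallel to $l(S)$ or $b(S)$, the boundary of $S_u$ that cuts off $X_i \cap S_u$ meets the sides of $X_i$ at strictly positive angles, so an elementary computation gives $\norm{X_i \cap S_u}/\norm{X_i} = \mathcal{O}((\delta/\epsilon_i)^2)$; making $\delta/\epsilon_i$ small enough thus renders $X_i$ (and symmetrically $Y_i$) $\alpha$-free for any prescribed $\alpha < 1$. This is in sharp contrast to the regular $2n$-gon case, where at least four corners of each crossing pair are captured inside $S_u$, forcing a covered fraction of at least $\norm{\P_{2n}^4} > 0$. Combined with a sufficiently rapidly shrinking sequence $\epsilon_i$ (ensuring non-interference among the $k$ crossing pairs at $q$) and the corridor paths of \cref{sec:gridConstruction}, this gives a graph class without linear local treewidth, finishing the proof.
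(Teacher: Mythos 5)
Your proposal follows essentially the same route as the paper: instantiate the nested-grid construction and realize the $k$ crossing edges at each subdivision contact by letting each tiny homothet penetrate the larger shape with a single extreme corner to a vanishingly small relative depth, which is exactly where the hypothesis that no side of $S$ is parallel to $l(S)$ or $b(S)$ is used (the paper pushes both $X_i$ and $Y_i$ into the shape whose right-angle corner sits at the contact point, with the penetrating corners $v_b(X_i)$ and $v_l(Y_i)$ meeting at a point just inside it). Your bound $\norm{X_i \cap S_u}/\norm{X_i} = \mathcal{O}((\delta/\epsilon_i)^2)$ is just a more explicit version of the paper's qualitative statement that the contact point can be placed close enough to the boundary so that all shapes are $\alpha$-free.
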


\begin{proof}
    We again use the construction described in \cref{sec:gridConstruction}. 
    So we aim to show that all three steps of the construction are possible using the shape $S$. 
    In the following we refer to the corner of $S$ contained in both $b(S)$ and $l(S)$ as $v_{bl}(S)$, and to the corner of $S$ contained in $b(S)$, respectively $ l(S)$, but not in the other as $v_b(S)$, respectively, $ v_l(S)$.
    For better readability, we draw all figures so that side $ l(S) $ is to the left, $ b(S) $ is the bottom side, and therefore $ v_l(S) $ is the topmost corner, $ v_b(S) $ is the rightmost corner, and $ v_{lb}(S) $ is bottom-left.

    As before, Steps~1 and~2 are straight-forward to implement.
    We refer to \cref{fig:SubdividedTriangleGrid} for an illustration of a cell of the resulting large grid $ G_{k,1} $ with a small grid inside.
    Note that adjacent shapes only have point contacts, as otherwise $S$ would have a side parallel to $l(S)$ or $b(S)$, which is prohibited.

    \begin{figure}
        \centering
        \includegraphics[scale=1]{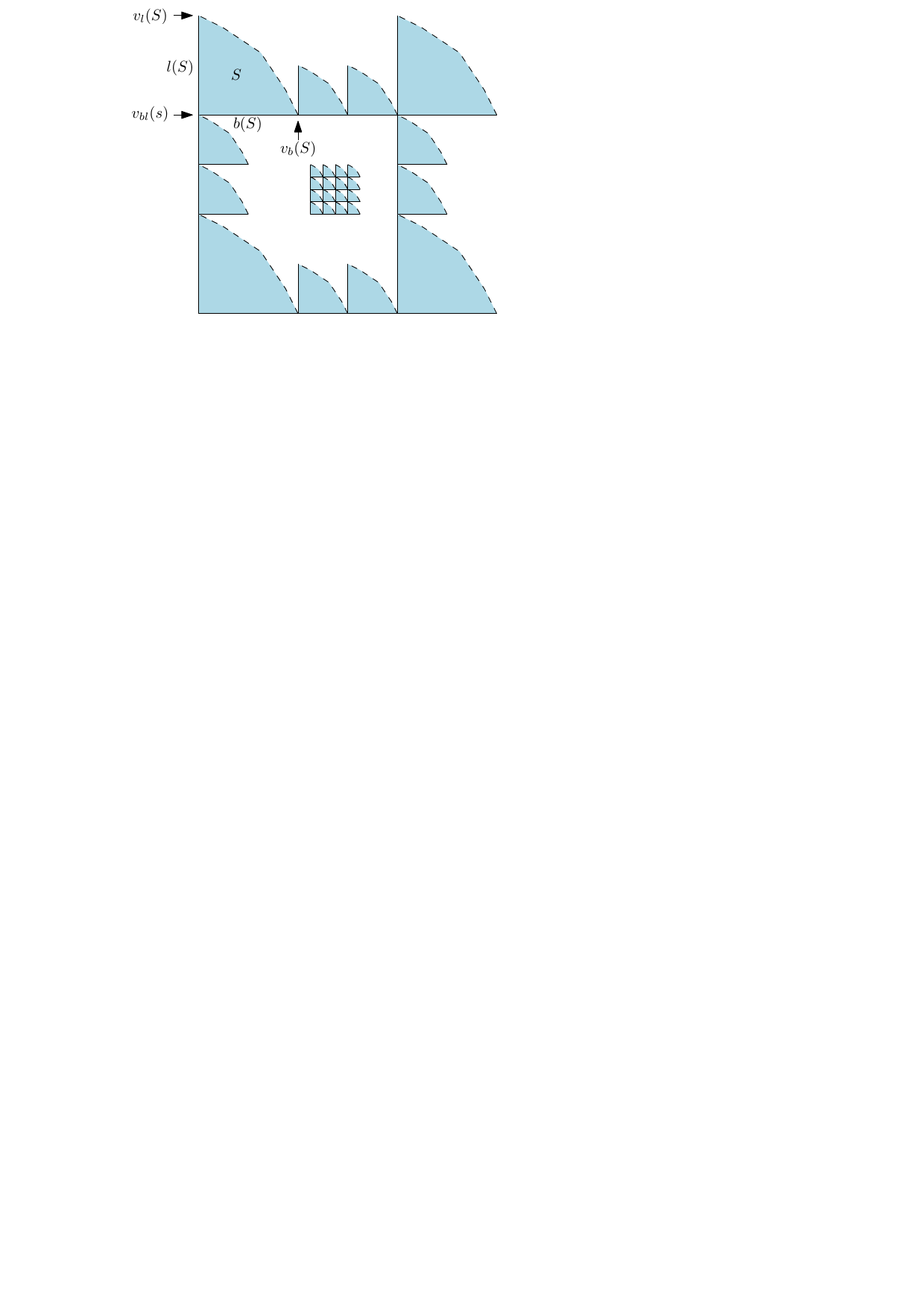}
        \caption{A cell of the grid constructed in Step~1 of \cref{lem:nonRegularNGons} using a shape $S$. The dashed sides may look different. The cell contains a $4\times4$-grid constructed in Step~2.}
        \label{fig:SubdividedTriangleGrid}
    \end{figure}

    To show that Step~3 is feasible we connect the small grids contained in each cell to form a $k^2\times k^2$-grid subdivision. 
    Let $S_u, S_v$ be two adjacent subdivision shapes in the grid $G_{k,1}$. 
    We aim to construct $k$ pairwise disjoint paths crossing the $S_u$-$S_v$-contact from one cell to another without intersecting any shapes other than $ S_u $ and $ S_v $.
    Assume without loss of generality that $S_u$ and $S_v$ meet at the corners $v_b(S_u)=v_{bl}(S_v)$ as shown in \Cref{fig:CrossingTriangles}. 
    The other cases are symmetrical.

    \begin{figure}
		\centering
        \includegraphics[page=2]{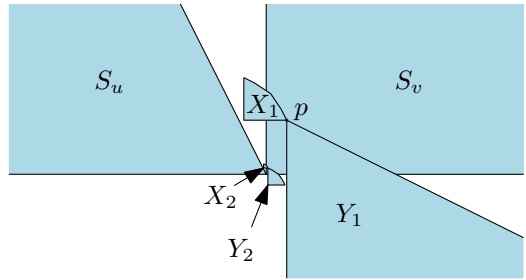}
        \caption{%
            Two shapes $S_u,S_v$ that we cross with $k$ independent edges.
            The corners $v_l(Y_1)$ and $ v_b(X_1) $ meet at $ p $ inside $ S_v $.
            We continue to the bottom-left with the smaller shapes $ X_2 $ and $ Y_2 $ forming the second edge.
        }
        \label{fig:CrossingTriangles}
    \end{figure}
    
    
    Next we introduce two shapes $ X_1 $ and $ Y_1 $ crossing the $ S_u $-$ S_v $-contact.
    Both shapes are placed such that they are disjoint from $ S_u $ and only one of their corners is in $ S_v, $ namely the rightmost corner $ v_b(S_u) $ for $ S_u $ and the topmost corner $ v_l(S_v) $ for $ S_v, $ and these two corners meet at a point $ p $ (\cref{fig:CrossingTriangles}).
    Note that $ p $ can be placed close enough to the boundary of $ S_v, $ without actually hitting the boundary, so that all shapes are $ \alpha $-free.
    
    We repeat the construction and add a total of $2k$ shapes $ X_1, Y_1, \dots, X_k, Y_k $.
    With each step of the iteration the shapes $X_i$ and $Y_i$, $ i \in [k] $, become smaller so that they do not intersect.
    Observe that we always find space for the next iteration as the contact point of $ X_i $ and $ Y_i $ is strictly inside $ S_v. $
    Repeating the construction $k$ times we get the independent $k$ edges crossing $S_u$ and $S_v$ required by Step~3, while keeping each shape $\alpha$-free.

    Finally, these edges can be connected to the small grids in the cells as explained in \cref{sec:gridConstruction} while keeping the radius in $\mathcal{O}(k)$. 
    Thus, we obtain a graph class with no linear local treewidth, and hence no product structure.
\end{proof}

\section{Canonical drawings}
\label{sec:embedding}

In this section, we describe how we derive a drawing of the corresponding intersection graph $G = (V,E)$ from a collection $\mathcal{C}$ of $\alpha$-free homothetic copies of $\P_n$ (for some $\alpha > 0$).
That is, we identify a point in $\mathbb{R}^2$ for each vertex $v \in V$ inside its corresponding set $S_v \in \mathcal{C}$, and route each edge $uv \in E$ as a polyline in $\mathbb{R}^2$ inside an $\varepsilon$-blowup of $S_u \cup S_v$.
Both steps are quite natural, but some care is needed in the details.
While there is nothing surprising here, in the upcoming \cref{sec:ProductStructure,sec:ProductStructurePossible} we prove that for specific choices of $n$ and $\alpha$, these drawings have interesting properties, such as being planar or $k$-independent crossing (cf.~\cref{def:k-independent-crossing}).

\medskip

Let $n \geq 3$ be fixed, and $\mathcal{C} = \{S_v\}_{v \in V}$ be a collection of $\alpha$-free homothetic copies of $\P_n$ for some $\alpha > 0$, and $G = (V,E)$ be its intersection graph.
Choose $\varepsilon > 0$ small enough (to be discussed later).
For each shape $S_v \in \mathcal{C}$ let $c_v$ denote its center.
We draw each vertex $v \in V$ as a point inside the $\epsilon$-ball $B_\varepsilon(c_v)$ around $c_v$, such that all vertices lie in general position.

\begin{figure}[ht]
    \centering
    \includegraphics[page=1]{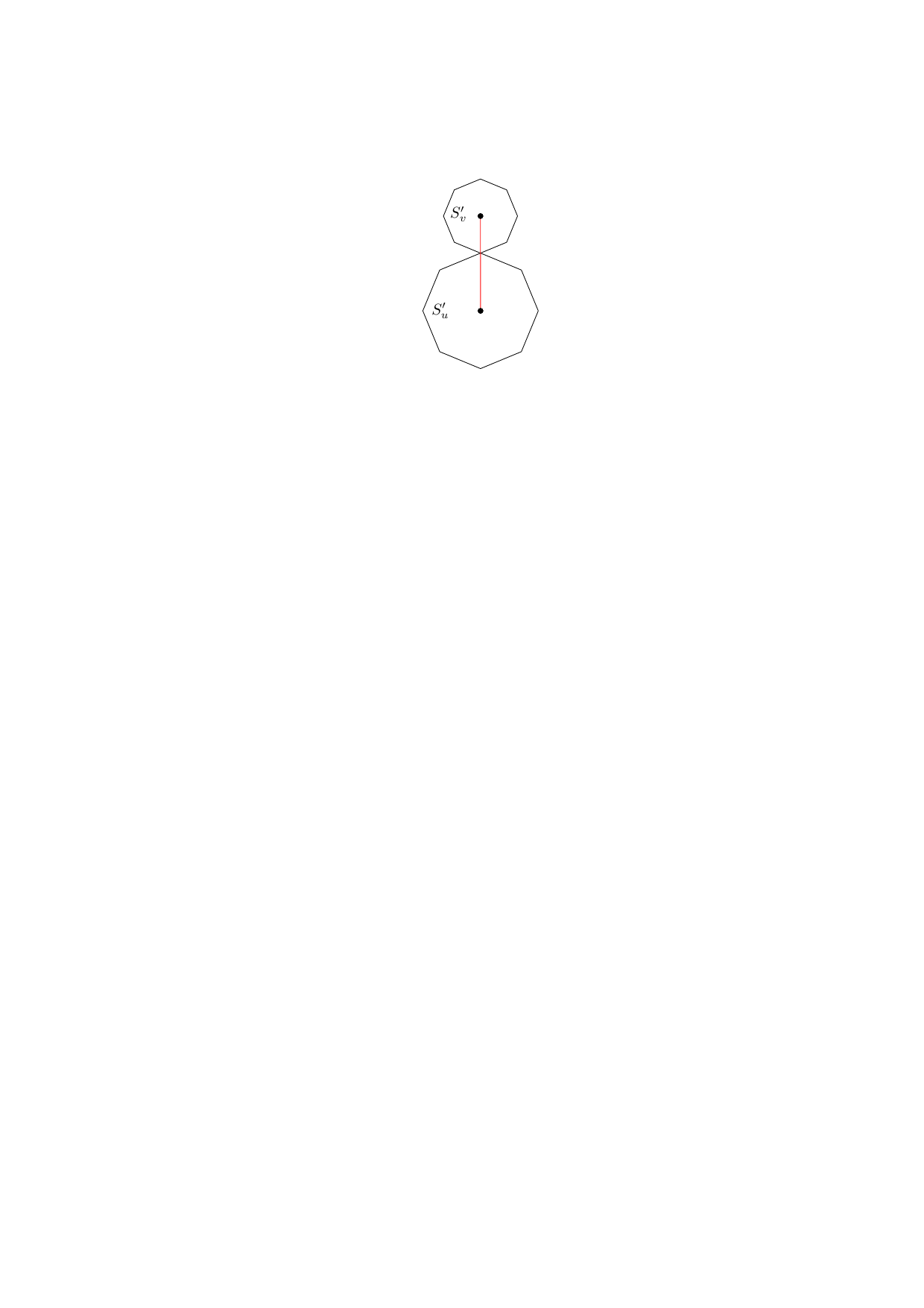}
    \hfill
    \includegraphics[page=2]{embedding.pdf}
    \hfill
    \includegraphics[page=3]{embedding.pdf}
    \hfill
    \includegraphics[page=4]{embedding.pdf}
    \caption{Embeddings with the edges (thick red) inside the shapes}
    \label{fig:embedding}
\end{figure}

Now, for every edge $uv \in E$ individually, we do the following.
First, we scale down $S_u$ at its center $c_u$ to $S'_u$, and $S_v$ at its center $c_v$ to $S'_v$, such that $B_\varepsilon(c_u) \subset S'_u$, $B_\varepsilon(c_v) \subset S'_v$, and $S'_u$ and $S'_v$ touch but share no positive area.
If the line segment $\overline{c_u c_v}$ intersects $S'_u \cap S'_v$, let $p_{uv}$ denote the point of intersection.
This is for example always the case when $n$ is even (see \cref{fig:embedding}).
Otherwise, let $p_{uv}$ be the single point in $S'_u \cap S'_v$ (see \cref{fig:embedding}-right).

We now draw the edge $uv$ as a $1$-bend polyline connecting $u$ to $p_{uv}$ and $p_{uv}$ to $v$. 
Observe that the edge $uv$, including its bend $p_{uv}$, is drawn inside $S'_u \cup S'_v$ and hence inside $S_u \cup S_v$.
In case the bend points of several edges happen to coincide, we slightly move the bend points within their $\epsilon$-balls such that no two such edges with a common endpoint cross.
Similarly, we slightly move the bend points such that they are in general position together with the vertices.
Hence, the drawing is simple\footnote{An embedding is called \emph{simple} if vertices and edges do not share points except for incident edges meeting at their common endpoint and non-adjacent edges may cross once but only two in a point (i.e., no touchings, no self-crossings, no crossings of adjacent edges, no three edges crossing in the same point).} except that edges may cross twice.

\medskip

Let us list some crucial properties of the resulting drawing.

\begin{observation}
    \label{obs:drawing}
    Given $\alpha$-free homothetic copies of $\P_n$, the canonical drawing $\Gamma$ of their intersection graph $G = (V,E)$ satisfies the following properties:
    \begin{itemize}
        \item Every vertex $v \in V$ is drawn $\varepsilon$-close to the center $c_v$ of its shape $S_v$.
        \item For every edge $uv \in E$ there are scaled-down interiorly disjoint $S'_u,S'_v$ with $B_\varepsilon(c_u) \subset S'_u \subseteq S_u$ and $B_\varepsilon(c_v) \subset S'_v \subseteq S_v$, with the edge drawn as a polyline with its only bend $\varepsilon$-close to $p_{uv} \in S'_u \cap S'_v$.
        \item The set of all bend points and all vertices is in general position.
    \end{itemize}
\end{observation}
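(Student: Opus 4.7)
The plan is to read off each of the three properties directly from the construction, after verifying that the parameter $\varepsilon > 0$ can be chosen small enough in terms of the (finite) collection $\mathcal{C}$ for all operations to be well-defined. The first property I would get for free: the construction places each vertex $v$ as a point inside $B_\varepsilon(c_v)$, and the final perturbation only moves vertices within their respective $\varepsilon$-balls, so the closeness to $c_v$ is preserved.

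For the second property, the main task is to argue that the scaling step of the construction is always feasible. I would fix an edge $uv \in E$ and consider the one-parameter family of pairs $(\lambda S_u, \lambda S_v)$, obtained by shrinking both shapes homothetically at their respective centers by a common factor $\lambda \in (0,1]$. At $\lambda = 1$ the pair still intersects since $uv \in E$, while for $\lambda$ close to $0$ each shape has arbitrarily small diameter around its (distinct) center, so the pair is disjoint. By continuity of the area of intersection in $\lambda$, there is an intermediate value $\lambda^*_{uv}$ at which the scaled shapes touch without sharing positive area; I take $S'_u = \lambda^*_{uv} S_u$ and $S'_v = \lambda^*_{uv} S_v$. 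The inclusions $B_\varepsilon(c_u) \subset S'_u$ and $B_\varepsilon(c_v) \subset S'_v$ then hold provided $\varepsilon$ is smaller than $\lambda^*_{uv}$ times the inradius of $S_u$ and $S_v$, respectively. Taking the minimum of these thresholds over the finitely many edges yields a single $\varepsilon$ that works uniformly. With this $\varepsilon$ fixed, the bend point $p_{uv}$ lies in $S'_u \cap S'_v \subseteq S_u \cap S_v$, and the $1$-bend polyline is contained in $S'_u \cup S'_v \subseteq S_u \cup S_v$ as claimed.

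The third property follows from the perturbation step of the construction: general position of a finite set of points is a generic condition (coincidences of pairs and collinearities of triples form a measure-zero subset of the configuration space), so an arbitrarily small perturbation of the bend points within their $\varepsilon$-neighborhoods places all bend points and vertices in general position while preserving both previous properties and the combinatorial structure of the drawing. The main, and essentially only, obstacle is the uniform choice of $\varepsilon$ in the second bullet; this is resolved purely by the finiteness of $\mathcal{C}$, since each of the finitely many thresholds encountered is strictly positive and we may take their minimum.
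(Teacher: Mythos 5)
Your proposal is correct and follows essentially the same route as the paper, which presents the observation as a direct readout of the construction in Section 4; you merely make explicit the continuity/monotonicity argument for the existence of the touching scaled copies $S'_u,S'_v$ and the uniform choice of $\varepsilon$ via finiteness, details the paper leaves implicit. (Your parenthetical that the centers are distinct is the one point worth keeping, since it is exactly what $\alpha>0$ guarantees and what makes the common-factor shrinking terminate.)
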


Note that we choose the $\epsilon$-offsets sufficiently small so that if there is a crossing in our drawing, then the two edges also intersect in the possibly non-simple embedding obtained by choosing $\epsilon = 0$.
Hence, from now on we may assume the vertices and bends to be placed exactly at the centers, respectively contact points, for checking whether two edges cross.

\section{Planar drawings}
\label{sec:ProductStructure}

Complementing the results in \cref{sec:NoProductStructure}, we show here that for some $\alpha < 1$ and all $n > 6$, the $\alpha$-free intersection graphs of homothetic regular $n$-gons admit product structure.
%
%
But let us quickly discuss the case $\alpha = 1$ first.
Here we have contact representations, i.e., the $n$-gons are interiorly disjoint and induce an edge if they touch.
For every $n \neq 4$, these contact graphs are planar, and hence admit product structure~\cite{PlanarGraphsQueueNumber}.
For $n = 4$, we have contact graphs of axis-aligned squares, which are $1$-planar, and hence also admit product structure~\cite{ProductStructurekPlanarGraphs}.

\begin{observation}\label{obs:alpha1}
    For every $n \geq 3$, the class of $1$-free intersection graphs of homothetic regular $n$-gons admits product structure.
\end{observation}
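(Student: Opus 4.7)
The plan is to reduce the statement to the known product-structure theorems for planar and $k$-planar graphs. First note that $\alpha = 1$ forces $\norm{S_v \cap S_u} = 0$ for all $u \neq v$, so the collection is interior-disjoint and $G$ is the contact graph of the arrangement.

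Next, the canonical drawing of \cref{sec:embedding} enters the picture: vertices sit at centroids and edges are $1$-bend polylines through contact points, so interior-disjointness forces each half-edge into the open interior of its $n$-gon; crossings can therefore occur only at corners simultaneously touched by several $n$-gons. The key geometric input is a bound on the number of $n$-gons meeting at a single common point. Since all $n$-gons are homothetic (same orientation), the interior wedge of each $n$-gon at a shared corner must be one of only $n$ fixed directions, and the internal angle $(n-2)\pi/n$ then gives a maximum of $3$ $n$-gons at a corner for $n \in \{3,5,6\}$, $4$ for $n = 4$, and $2$ for $n \geq 7$.

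For $n \neq 4$ at most three $n$-gons meet at a corner, so at most three incident edges have their bends inside the $\varepsilon$-ball around that corner. A generic perturbation afforded by \cref{obs:drawing} places the three bends in a tiny triangle, and the incident edges leave in three disjoint radial wedges without crossing. The resulting drawing is planar, and~\cite{PlanarGraphsQueueNumber} applies. For $n = 4$, up to four axis-aligned squares share a corner, producing a local $K_4$ whose two ``diagonal'' edges are forced to cross; the drawing is thus $1$-planar and~\cite{ProductStructurekPlanarGraphs} concludes.

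The main obstacle will be verifying that the perturbation step preserves planarity globally and not merely locally: perturbed bend points must not create crossings with distant parts of the drawing. This should follow because all perturbations stay in arbitrarily small $\varepsilon$-balls while each edge is otherwise separated from non-incident $n$-gons by a positive margin, but it requires explicit bookkeeping of how many simultaneously-perturbed coincidences can lie arbitrarily close together. Apart from this, the argument is a direct assembly of the geometric wedge-counting with the cited product-structure theorems.
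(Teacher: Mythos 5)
Your proposal is correct and matches the paper's approach: the paper likewise observes that for $\alpha=1$ the shapes are interiorly disjoint, asserts that the resulting contact graphs are planar for $n\neq 4$ and $1$-planar for $n=4$, and cites the product-structure theorems for planar and $k$-planar graphs. You simply spell out the wedge-counting and perturbation details that the paper leaves implicit; the global perturbation concern you flag is not a real obstacle, since any crossing of vertex-disjoint edges must occur at a common contact point of all four shapes.
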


Turning back to the case $\alpha < 1$, i.e., the statement of \cref{thm:results_productStructure}, we shall use the canonical drawings defined in \cref{sec:embedding}.
To prove \cref{thm:results_productStructure}, we show that for appropriate $\alpha < 1$ and all $ n > 6 $ these canoncial drawings are crossing-free and thus the corresponding class of intersection graphs admits product structure by~\cite{PlanarGraphsQueueNumber}.

\begin{lemma} \label{lem:Crossing3Touching}
    Let $n \geq 3$, $\alpha < 1$, and $G$ be an intersection graph of $\alpha$-free homothetic regular $n$-gons with canonical drawing $\Gamma$.
    If two edges $uv,xy \in E$ cross in $\Gamma$, then there is a point $p \in \mathbb{R}^2$ that is contained in at least three of $S_u,S_v,S_x,S_y$.
\end{lemma}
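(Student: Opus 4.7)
The plan is to show that one of the bend points $p_{uv}, p_{xy}$ must lie in a third shape, which immediately yields a point in three of $S_u, S_v, S_x, S_y$.

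Since each edge is a one-bend polyline, the crossing point $q$ lies on one straight segment of each polyline. By the symmetry of relabeling $u \leftrightarrow v$ or $x \leftrightarrow y$, I may assume $q \in [c_u, p_{uv}] \cap [c_x, p_{xy}]$. As $[c_u, p_{uv}] \subseteq S'_u \subseteq S_u$ and $[c_x, p_{xy}] \subseteq S'_x \subseteq S_x$, this already gives $q \in S_u \cap S_x$. Moreover, from the construction of the scaled shapes that meet exactly at the bend, $p_{uv} \in S_u \cap S_v$ and $p_{xy} \in S_x \cap S_y$. Thus, if $p_{uv} \in S_x \cup S_y$ then $p_{uv}$ lies in three shapes and we are done, and symmetrically for $p_{xy} \in S_u \cup S_v$. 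The entire proof reduces to establishing the dichotomy
\[
    p_{uv} \in S_x \cup S_y \quad\text{or}\quad p_{xy} \in S_u \cup S_v.
\]

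To prove this, I would argue by contradiction. Suppose both $p_{uv} \notin S_x \cup S_y$ and $p_{xy} \notin S_u \cup S_v$. Since $q$ lies in the interior of $S_x$ (being an interior point of the segment $[c_x, p_{xy}] \subseteq S'_x$) but $p_{uv} \notin S_x$, convexity of $S_x$ forces the segment $[c_u, p_{uv}] \subseteq S_u$ to exit $S_x$ at a unique point $r \in S_u \cap \partial S_x$ strictly between $q$ and $p_{uv}$. Symmetrically, the segment $[c_x, p_{xy}] \subseteq S_x$ exits $S_u$ at a unique point $s \in S_x \cap \partial S_u$ strictly between $q$ and $p_{xy}$. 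The plan is now to trace each polyline past its bend (edge $uv$ continuing into $S_v$, edge $xy$ into $S_y$), exploit the transversality of the crossing at $q$, and derive either a second crossing of $uv$ and $xy$ near $q$ (contradicting general position) or a triple intersection of shapes elsewhere in the configuration.

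The main obstacle is this final step: a rigorous convex-geometric argument showing that two canonical polylines cannot cross unless some bend meets a third shape. I expect it to hinge on the rigidity provided by the facts that $S'_u \cap S'_v = \{p_{uv}\}$ and $S'_x \cap S'_y = \{p_{xy}\}$ are single-point contacts and that the polylines are straight chords from the centers to the respective contact points, so that the assumed configuration forces too much of $S_v$ (respectively $S_y$) to meet the opposite pair and creates the desired triple intersection.
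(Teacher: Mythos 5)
Your reduction is the right one and matches the paper's: assume the crossing involves the segments $\overline{u\,p_{uv}}$ and $\overline{x\,p_{xy}}$, note that it suffices to land one of the two bend points inside a third shape, and so reduce everything to the dichotomy ``$p_{uv}\in S_x$ or $p_{xy}\in S_u$.'' But you then explicitly defer the proof of that dichotomy, calling it ``the main obstacle,'' and the plan you sketch for it does not work as stated. In particular, deriving ``a second crossing of $uv$ and $xy$ near $q$'' is not a contradiction: the canonical drawing is only simple up to edges crossing \emph{twice} (this is stated when the drawing is defined), so general position does not rule that out. And the alternative branch of your plan --- ``a triple intersection of shapes elsewhere'' --- is just a restatement of the conclusion you are trying to prove. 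So the proposal has a genuine gap exactly where the lemma's content lies.

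The missing idea in the paper is a short homothety argument that you may want to internalize: let $q$ be the crossing point and let $S''_x$ be the copy of $S'_x$ scaled down at the fixed point $p_{xy}$ until its center reaches $q$ (possible because $q$ lies on $\overline{x\,p_{xy}}$). Then $p_{xy}\in S''_x\subseteq S_x$, and $S''_x$ is a homothet of $\P_n$ whose center $q$ lies on the segment from the center of $S'_u$ to the boundary point $p_{uv}$ of $S'_u$. Comparing $S''_x$ with the homothet of $S'_u$ obtained by shrinking $S'_u$ toward $p_{uv}$ until its center is also $q$ (this shrunken copy lies inside $S'_u$ by convexity and still contains $p_{uv}$), one of the two concentric homothets contains the other; hence either $p_{uv}\in S''_x\subseteq S_x$ or $S''_x\subseteq S'_u$, which gives $p_{xy}\in S_u$. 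That single comparison is what your argument is missing, and without it the proof is incomplete.
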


\begin{proof}
    Consider the scaled-down $n$-gons $S'_u$ and $S'_v$ used to draw the edge $uv$, as well as $S'_x$ and $S'_y$ used to draw the edge $xy$.
    In particular, consider $p_{uv} \in S'_u \cap S'_v$ and $p_{xy} \in S'_x \cap S'_y$.
    Further, we may assume that the crossing of $uv$ and $xy$ involves the segments $\overline{u p_{uv}}$ and $\overline{x p_{xy}}$.
    Now observe that if $p_{xy}\in S_u$, then $p_{xy} \in S_u \cap S_x \cap S_y$ and we are done.
    Similarly, we are done if $p_{uv} \in S_x$.
    In the remainder, we aim to show that one of the two cases applies.

    \begin{figure}
	\centering
	\includegraphics{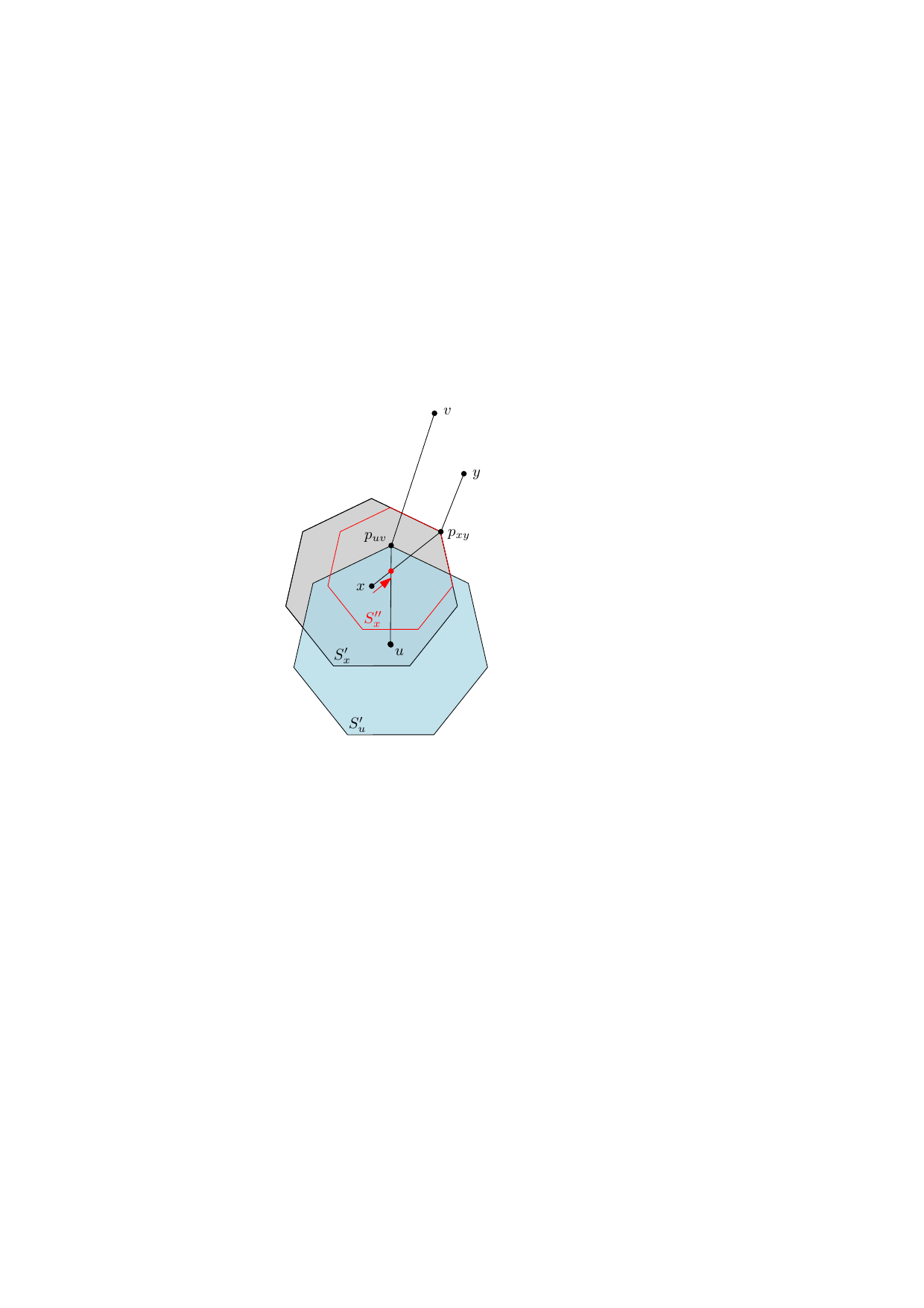}
        \caption{Situation in the proof of \Cref{lem:Crossing3Touching} with $p_{uv} \in S'_x \subseteq S_x$.}
        \label{fig:scaled}
    \end{figure}

    For this, let $S''_x$ be obtained from $S'_x$ by scaling it down at the point $p_{xy}$ until the center of $S''_x$ lies on the segment $\overline{u p_{uv}}$ as shown in \cref{fig:scaled}.
    Since $p_{xy} \in S'_x$, we have $p_{xy} \in S''_x$ and $S''_x \subseteq S'_x \subseteq S_x$. 
    As the center of $S''_x$ lies on the segment $\overline{u p_{uv}}$, we obtain that $S''_x$ either contains $p_{uv}$ or is completely contained in $S'_u \subseteq S_u$.
    In the first case, we have $p_{uv} \in S''_x \subseteq S_x$, while in the second we have $p_{xy} \in S''_x \subseteq S_u$, as desired.
\end{proof}

By \cref{lem:Crossing3Touching}, crossings in $\Gamma$ are only possible if three homothetic copies of $\P_n$ have a common point.
However, this in turn (as long as $n > 6$) forces that one of the three shapes has some significant portion of its area covered by the other two.

\begin{restatable}{lemma}{threeTouchingOverlap}
    \label{lem:3touching-overlap}
    There is an $ \alpha < 1 $ such that for every $n > 6$ and every three homothetic copies $S_u,S_v,S_w$ of $\P_n$ that have a common point $p \in S_u \cap S_v \cap S_w$, one of the three copies is not $ \alpha $-free.
\end{restatable}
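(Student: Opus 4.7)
My plan is to reduce the lemma to a uniform geometric estimate at the common point $p$, obtained by combining an angular pigeonhole argument with a wedge-to-area conversion.

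First, for each $x\in\{u,v,w\}$, I would consider the tangent cone $T_x\subseteq S^1$ of $S_x$ at $p$, i.e.\ the set of unit directions pointing into $S_x$ from $p$. Since $S_x$ is a homothet of the regular $n$-gon $\P_n$ with $n>6$, the angular measure $|T_x|$ equals $2\pi$ if $p$ is interior to $S_x$, equals $\pi$ if $p$ lies in the relative interior of an edge, and equals the interior angle $(n-2)\pi/n$ if $p$ is a vertex; in every case $|T_x|\geq (n-2)\pi/n>2\pi/3$. Applying inclusion--exclusion on $S^1$ together with $|T_u\cup T_v\cup T_w|\leq 2\pi$ yields
\[
    \sum_{x<y}|T_x\cap T_y|\;\geq\; \sum_x|T_x|-2\pi\;\geq\;\frac{3(n-2)\pi}{n}-2\pi\;=\;\frac{(n-6)\pi}{n}.
\]
By pigeonhole, some pair (relabel them $S_u,S_v$) satisfies $|T_u\cap T_v|\geq(n-6)\pi/(3n)$, and this is bounded below by $\gamma:=\pi/21$ \emph{uniformly} for all $n\geq 7$.

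Second, I would convert this positive angular overlap at $p$ into a uniform area overlap. After relabeling so that $|S_u|\leq|S_v|$, I shrink $S_v$ homothetically towards $p$ to obtain $\tilde S_v\subseteq S_v$ with $|\tilde S_v|=|S_u|$; scaling about $p$ preserves the tangent cone at $p$, so $|T_u\cap T_{\tilde v}|\geq\gamma$ still holds and it suffices to lower-bound $|S_u\cap\tilde S_v|/|S_u|$. For equal-sized homothets $A,B$ of $\P_n$ sharing $p$ with $|T_A\cap T_B|\geq\gamma$, the intersection $A\cap B$ is a convex polygon that contains, emanating from $p$, the full wedge of angle $\geq\gamma$ pointing into the common tangent cone. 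Since $\P_n$ has inradius-to-circumradius ratio $\cos(\pi/n)\geq\cos(\pi/7)$, each shape is ``thick'' in every direction from $p$, so the wedge extends inward by a length comparable to the common diameter of $A,B$ before meeting the boundary of either. Integrating over the wedge then yields $|A\cap B|\geq c\,|A|$ for an absolute constant $c>0$. Combining the two steps, the smaller of $S_u,S_v$ has at least a $c$-fraction of its area covered by the other, hence is not $\alpha$-free for $\alpha:=1-c/2<1$, proving the lemma.

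The main obstacle is the uniform area estimate in the second step: the inward extension of the wedge from $p$ could in principle be small if $p$ sits pathologically close to an edge pointing in the wedge direction, and this must be controlled uniformly in $n$. I would handle it by splitting into two regimes. For $n$ large, $\P_n$ lies close to a disk in Hausdorff distance, and a direct wedge-of-disk calculation provides the bound with a constant tending to the disk value. For each fixed small $n\in\{7,8,\ldots,N_0\}$, the space of equal-sized pairs of $\P_n$-copies sharing $p$ with angular overlap at least $\gamma$ is, after quotienting out translation, a compact finite-dimensional configuration space on which $|A\cap B|/|A|$ is continuous and strictly positive, so attains a positive minimum $c_n$. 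Taking the minimum of the finitely many $c_n$ together with the large-$n$ disk bound produces one uniform $c>0$ valid for all $n>6$.
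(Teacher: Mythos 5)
Your first step (the tangent-cone pigeonhole) is sound: each $|T_x|\geq\frac{(n-2)\pi}{n}>\frac{2\pi}{3}$, so some pair has $|T_u\cap T_v|\geq\frac{(n-6)\pi}{3n}\geq\pi/21$ for $n\geq 7$. The fatal problem is the second step, the claimed implication ``equal-sized homothets $A,B$ sharing $p$ with $|T_A\cap T_B|\geq\gamma$ satisfy $|A\cap B|\geq c\,|A|$.'' This is false. If $p$ lies in the \emph{interior} of a shape, its tangent cone at $p$ is all of $S^1$ regardless of where $p$ sits inside the shape; so two equal-sized $n$-gons that overlap only in an arbitrarily thin sliver, with $p$ chosen in the interior of that sliver, have $|T_A\cap T_B|=2\pi\geq\gamma$ yet $|A\cap B|/|A|$ arbitrarily small. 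Such a pair can perfectly well be the one your pigeonhole selects (take a third $n$-gon through $p$; all three contain $p$). Your compactness fallback does not rescue this: the map $(A,B)\mapsto|T_A\cap T_B|$ is discontinuous as $p$ crosses a boundary, so the set $\{|T_A\cap T_B|\geq\gamma\}$ is not closed, and the infimum of $|A\cap B|/|A|$ over it is genuinely $0$ (the sliver configurations above converge to a touching configuration that the constraint excludes). In short, angular overlap of tangent cones at $p$ controls area overlap only when $p$ lies on the boundaries of the shapes involved, and your argument never reduces to that case.

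That reduction is precisely the part of the paper's proof you are missing. The paper first treats the extremal configuration where $p$ lies on all three boundaries (there the interior angles at $p$ each exceed $2\pi/3$, so they sum to more than $2\pi$ and force a quantifiable pairwise overlap, computed explicitly as $4\norm{\P_n^{n/6+1}}\to 4(\tfrac16-\tfrac{\sqrt3}{4\pi})\approx 0.115$), and then argues separately, by perturbing/translating one shape while tracking the free area, that any configuration with a common interior point can be moved to one with $p$ on all three boundaries without increasing the overlap. To fix your proof you would need to supply an analogous reduction (or otherwise handle the interior-point case directly, e.g.\ by showing that when one pair overlaps only in a thin sliver through $p$, the third shape must heavily cover one of them); as written, the argument does not establish the lemma.
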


\begin{proof}
    First consider the case that $S_u,S_v,S_w$ have the same size and meet at a point $p$ on their boundaries.
    Each interior angle of $S_u,S_v,S_w$ at $p$ is at least $\frac{n-2}{n}\pi > \frac23 \pi$, since $n > 6$.
    Hence, these three angles sum up to more than $2\pi$ and thus the minimum overlap is positive and converges toward the overlap of three circles evenly distributed around $p$ as $n$ grows, see \cref{fig:overlap_boundary}.
    For $n$ divisible by $6$, a sixth of each boundary is covered by each of the other two shapes.
    That is, the portion of $S_u$ that is covered by $S_v \cup S_w$ is $4 \cdot \norm{\P_n^{n/6+1}}$, which tends to $4 (\frac{1}{6} - \frac{\sqrt{3}}{4 \pi}) \approx 0.115$ as $n \to \infty$ (cf.\ \cref{lem:Amn} for a formula for the area).

    \begin{figure}
        \centering
        \includegraphics[scale=0.9]{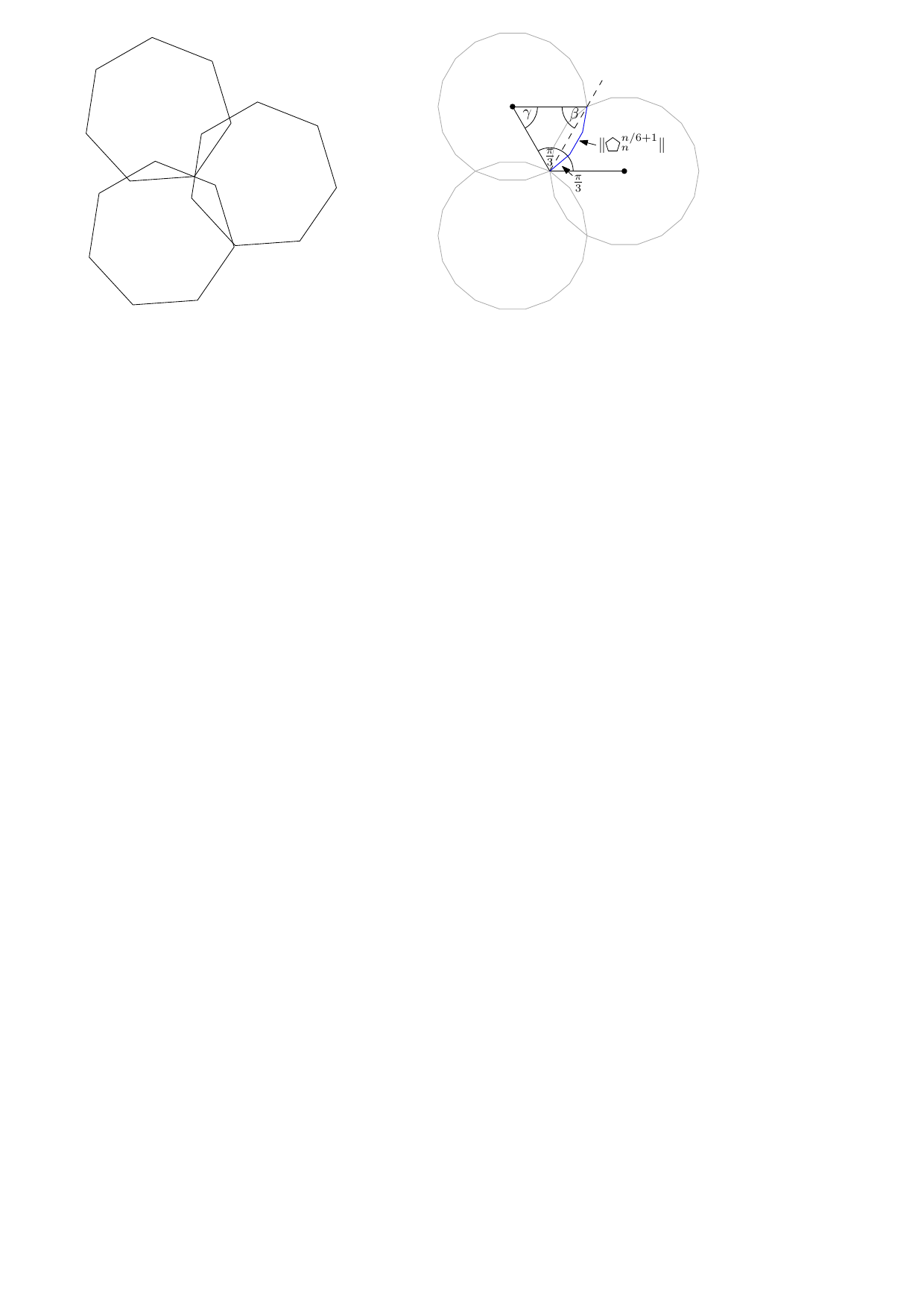}
        \caption{Three 7-gons, respectively three 18-gons, meeting in a point on their boundaries, where each shape has positive overlap. The overlap is minimized by three shapes of the same size with angles $2\pi/3$ between each two centers. By symmetry, the angle $\beta$ is $\pi/3$, and hence also $\gamma$, which is why the blue polygon has $n/6+1$ corners and area $\norm{\P_n^{n/6+1}}$.}
        \label{fig:overlap_boundary}
    \end{figure}
    
    Second, if the sizes of $S_u,S_v,S_w$ differ but $p$ is still on each boundary, then (compared to the previous case) the overlap of the smallest shape increases\footnote{For improved readability, we use increase and decrease in this proof even if sometimes the overlap may also not change.} or the angle between the centers of the other two shapes with respect to $ p $ decreases, which increases their intersection.
    
    In the remainder, we show that it is indeed optimal to have the common point $p$ on all three boundaries.
    For this, we first show that, there is always an alternative point $p' \in S_u\cap S_v\cap S_w$ that is on the boundary of two of $S_u,S_v,S_w$.
    Indeed, we find such a point $p'$ one the boundary of the (possibly degenerate) polygon $S_u\cap S_v\cap S_w$.
    (If $S_u\cap S_v\cap S_w$ contains no such point, then one shape is contained in another, and we can choose $\alpha = 1$.)
    
    Having $p'$ on the boundary of $S_u$ and $S_v$ but in the interior of $S_w$, we next move $S_w$ until $p'$ is on its boundary while decreasing the overlap.
    For this, consider the free area of $S_w$, which consists of one or two components (again, otherwise we can choose $\alpha \neq 1$).
    If there is only one component, then we move $S_w$ so that this component increases, thereby decreasing the overlap.
    On the other hand, if we have two components, then their size depends quadratically on their diameter.
    That is, the free area is minimized by choosing the two components to have the same size and maximized by choosing one of the two components to have zero size.
    Choosing an optimal position for $S_w$ thus ends up with $p'$ on its boundary and hence in a previous case.
    In both cases, we move $S_w$ under the constraint that the ratio of the area $S_w$ shares with each of the other two shapes stays the same.
    Hence, minimizing the overlap of $S_w$ also decreases the overlap of the other two.
    Together, we obtain that the overlap is indeed minimized by moving $S_w$ such that the three shapes meet in a common point on their boundaries, which concludes the proof.
\end{proof}

\Cref{lem:3touching-overlap} together with \cref{lem:Crossing3Touching} implies \cref{thm:results_productStructure}.

\resultsProductStructure*

We finish this section by calculating the area $\norm{\P_n^m}$ for $ m \leq n/2 $, which is not strictly necessary to verify our statements but sometimes convenient to know, e.g., in the proof of \cref{lem:3touching-overlap}. 
Recall that $\P_n$ denote a regular $n$-gon with area $\norm{\P_n} = 1$.
Moreover, for $m \leq n$ we denote by $\P_n^m$ a subset of $\P_n$ that is the convex hull of $m$ consecutive corners of $\P_n$.
Note that for $ m > n/2 $, the area is obtained by $ \norm{\P_n^m} = 1 - \norm{\P_n^{n - m + 2}} $.

\begin{lemma}
    \label{lem:Amn}
    For a regular $n$-gon with area $1$, the area $\norm{\P_n^m}$ of an $n$-gon segment with $m \leq n/2 $ corners is
    \begin{equation*}
        \norm{\P_n^m} = \frac{(m-1) \sin \theta - \sin ((m-1)\theta)}{n \sin(\theta)},
    \end{equation*}
    where $\theta = \frac{2\pi}{n}$ is the angle at the center of $\P_n$ between two consecutive corners of $\P_n$.
\end{lemma}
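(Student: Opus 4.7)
The plan is to compute $\norm{\P_n^m}$ by decomposing the central pie-slice of $\P_n$ subtended by the $m$ consecutive corners into the convex hull $\P_n^m$ plus one extra triangle. Let $c$ denote the center of $\P_n$ and let $r$ be its circumradius, so that $\P_n$ is tiled by the $n$ congruent isosceles triangles obtained by connecting $c$ to consecutive pairs of corners. Each such triangle has two sides of length $r$ with enclosed angle $\theta$ and therefore area $\tfrac{1}{2}r^2\sin\theta$. From the normalization $\norm{\P_n} = 1 = n\cdot\tfrac{1}{2}r^2\sin\theta$ I obtain the identity $\tfrac12 r^2 = \tfrac{1}{n\sin\theta}$, which I will use at the very end to eliminate $r$.

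Next, label the $m$ consecutive corners $P_1,\dots,P_m$. The pie-slice is the union of the $m-1$ triangles $cP_iP_{i+1}$, and has total area $(m-1)\cdot\tfrac12 r^2\sin\theta$. The key geometric observation is that, since $m \leq n/2$, we have $(m-1)\theta \leq \pi - 2\pi/n < \pi$, so the chord $P_1P_m$ is a strict chord of the circumscribed circle that separates $c$ from the intermediate corners $P_2,\dots,P_{m-1}$. Consequently, the chord splits the pie-slice into two regions with disjoint interiors: the triangle $cP_1P_m$ on the center's side, and the convex hull $\P_n^m$ on the opposite side.

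Since the triangle $cP_1P_m$ is isosceles with two sides of length $r$ enclosing an angle of $(m-1)\theta$, its area is $\tfrac12 r^2\sin((m-1)\theta)$. Subtracting from the pie-slice and then substituting $\tfrac12 r^2 = \tfrac{1}{n\sin\theta}$ yields
\[
\norm{\P_n^m} = (m-1)\cdot\tfrac12 r^2\sin\theta - \tfrac12 r^2\sin((m-1)\theta) = \frac{(m-1)\sin\theta - \sin((m-1)\theta)}{n\sin\theta},
\]
which is the claimed formula. The only step requiring any argument is the disjoint-union decomposition of the pie-slice, but this is immediate from the separating-chord property and the convexity of $\P_n$; I do not anticipate any obstacle. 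As an independent sanity check, one could instead compute the area directly via the Shoelace formula applied cyclically to $P_1,\dots,P_m,P_1$, yielding the same expression $\tfrac12 r^2\bigl((m-1)\sin\theta - \sin((m-1)\theta)\bigr)$.
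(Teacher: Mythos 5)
Your proof is correct, and it takes a mildly different route from the paper's. The paper works with the circumcircle: it obtains $\norm{\P_n^m}$ as the area of the circular disk segment over the chord $P_1P_m$ (angle $\eta=(m-1)\theta$) minus $m-1$ copies of the disk segment over a single side (angle $\theta$), then invokes the formula $(\beta-\sin\beta)/2$ for a unit disk segment and normalizes by the $n$-gon's area. You instead stay entirely inside the polygon: you split the central pie-slice (the $m-1$ isosceles triangles at the center) into $\P_n^m$ and the single triangle $cP_1P_m$, and use only the SAS area formula $\tfrac12 r^2\sin(\cdot)$. The two computations are term-by-term parallel --- in the paper's telescoping the arc-length contributions $\eta$ and $(m-1)\theta$ cancel, leaving exactly your two sine terms --- so the resulting algebra is identical. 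Your version is slightly more elementary (no circular-segment formula needed) and your justification that $(m-1)\theta<\pi$, so the chord genuinely separates $c$ from the intermediate corners and the triangle $cP_1P_m$ is nondegenerate, is a point the paper leaves implicit. No gaps.
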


\begin{proof}
    For ease of presentation, we calculate the area $\norm{S_n^m}$ of a segment $S_n^m$ with $m$ corners of a regular $n$-gon $S$ with cirumradius $1$, and then divide by the area of $S$ at the end.
    The area of $S_n^m$ is obtained from the area of the disk segment with angle $\eta = (m-1)\theta$ by subtracting $m-1$ times the area of a disk segment with angle $\theta$ (\cref{fig:app_InsideOutsideAreaTotal}).
    \begin{figure}
        \centering
        \includegraphics[page=2]{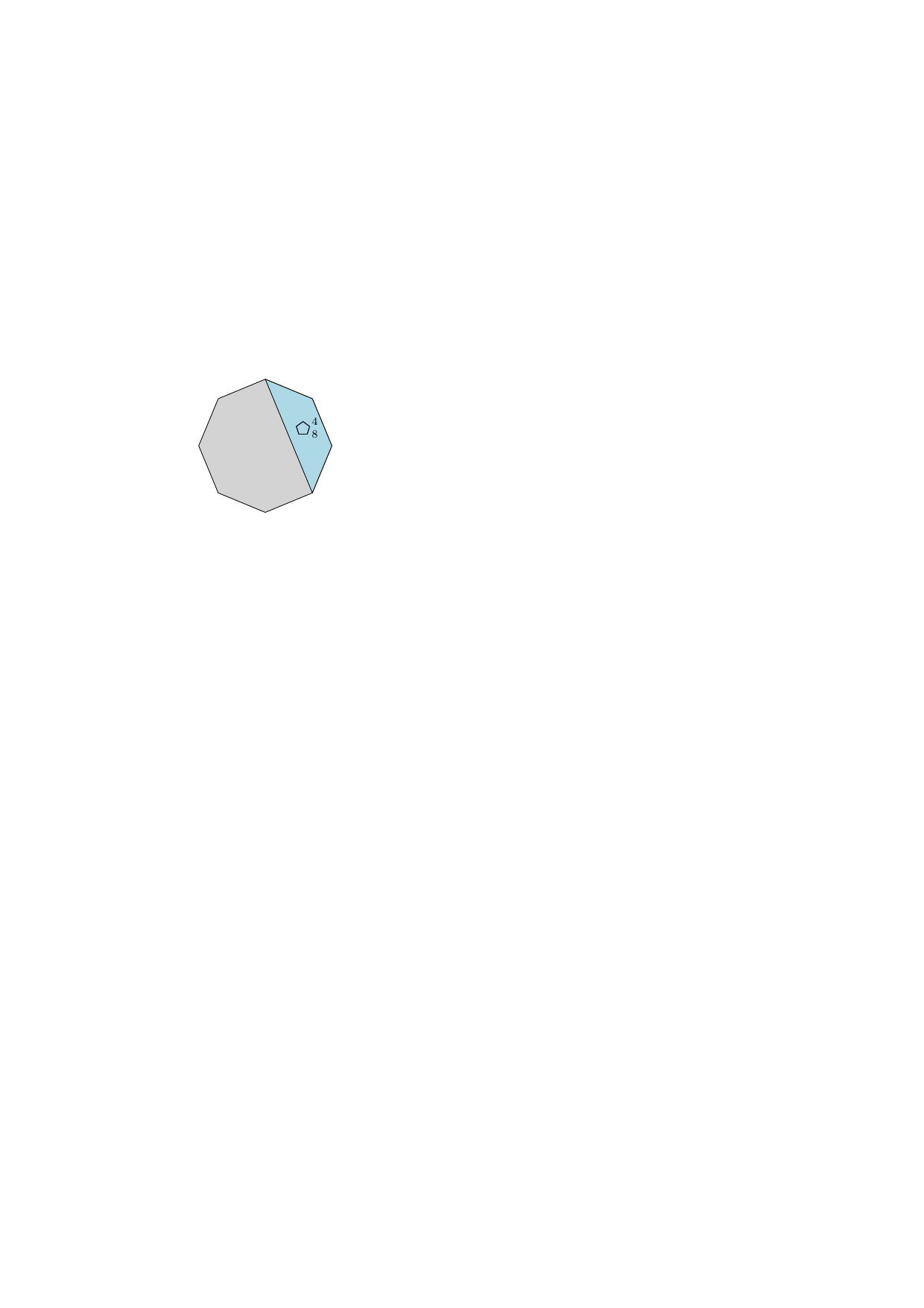}
        \caption{
            Area $\norm{\P_n^m}$ of an $n$-gon segment (blue) with $m=4$ corners.
            The area of $\P^m_n $ is obtained from the area of the disk segment with angle $ \eta = (m-1) \theta $ by subtracting $ m - 1 $ times the shown disk segments with angle $\theta = 2\pi/n$.
        }
        \label{fig:app_InsideOutsideAreaTotal}
    \end{figure}
    Here, a \emph{disk segment} is the shape bounded by a chord of a circle and the shorter of the two arcs, where its angle is given by the angle between the two endpoints of the chord with respect to the center of the circle.
    As the area of a disk segment with radius 1 and angle $ \beta $ is $ \area_\beta = (\beta - \sin \beta)/2 $, we obtain   
    \begin{align*}
        2 \cdot \norm{S_n^m} &= 2 \area_\eta - 2 (m-1) \area_\theta \\
        &= \eta - \sin \eta - (m-1) (\theta - \sin \theta) \\
        &= (m-1)\theta - \sin ((m-1)\theta) + (m-1) \theta - (m-1) \sin \theta \\
        &= (m-1) \sin \theta - \sin ((m-1)\theta).
    \end{align*}
    Dividing this by two times the area $ n \sin(\theta)/2 $ of a regular $n$-gon with circumradius 1 yields the desired formula.
\end{proof}

\section{\texorpdfstring{$\bm{k}$}{k}-independent crossing drawings}
\label{sec:ProductStructurePossible}

In this section we again consider intersection graphs of $\alpha$-free homothetic regular $n$-gons, and specifically, whether their canonical drawings are $k$-independent crossing (cf.~\cref{def:k-independent-crossing}) for a global constant $k$ (that might depend on $n$).
For fixed $n \geq 3$, we let $\alpha \in [0,1]$ vary.
For $\alpha = 1$, one can show that the canonical drawings are planar (or $1$-planar for $n=4$) and in particular $1$-independent crossing.
For smaller $\alpha$, we have a richer graph class, which is less likely to have $k$-independent crossing canonical drawings for any constant $k$.
In fact, we shall prove that $s(n)$ as defined in~\eqref{eq:threshold-definition} is the precise tipping point for $\alpha$ until which the canonical drawings for $\mathcal{G}(\P_n,\alpha)$ are $k$-independent crossing.
That is, we prove \cref{thm:results_k-independent-crossing}.



\subsection{Not \texorpdfstring{$\bm{k}$}{k}-independent crossing for \texorpdfstring{$\bm{\alpha < s(n)}$}{alpha < s(n)}}
\label{sec:BoundarySn}

We show that for $ \alpha < s(n) $, edges can be crossed by arbitrarily many independent edges.


\begin{proposition}
    \label{prop:alpha-small-no-independent}
    Let $n \geq 3$, $\alpha < s(n)$, and $k \geq 1$.
    Then there is a collection $\mathcal{C}_k = \{S_v\}_{v \in V}$ of $\alpha$-free homothetic regular $n$-gons with intersection graph $G_k = (V,E)$ such that one particular edge $uv \in E$ is crossed in the canonical drawing $\Gamma$ of $G_k$ by $k$ independent edges.
\end{proposition}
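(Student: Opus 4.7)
The plan is to build the configuration in two stages, relying on the very tipping-point characterization of $s(n)$ to supply the base crossing, and then adapting the nested-rescaling trick used in the proofs of \cref{thm:Regular2NGons,lem:nonRegularNGons} to produce the remaining $k-1$ crossings.

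First, I will set up a base configuration of four $\alpha$-free homothetic regular $n$-gons $S_u, S_v, X_1, Y_1$ with $S_u\cap S_v \neq\emptyset$ (giving the edge $uv$) and $X_1 \cap Y_1 \neq \emptyset$ (giving the edge $X_1Y_1$). Since $\alpha<s(n)$, the tipping-point characterization of $s(n)$ allows two $\alpha$-free homothetic $n$-gons to meet at a point $p_1$ strictly inside a third $\alpha$-free $n$-gon without either of them covering a corner of that third gon. Taking $S_u$ (and symmetrically $S_v$) as the third gon, I can place $X_1,Y_1$ so that $p_1$ lies in the interior of $S_u\cap S_v$ near the canonical bend $p_{uv}$ of $uv$, and orient $X_1,Y_1$ so that their centers $c_{X_1},c_{Y_1}$ lie on opposite sides of $\overline{c_uc_v}$. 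By \cref{obs:drawing} this forces the canonical polylines of $uv$ and $X_1Y_1$ to cross in $\Gamma$.

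To extend to $k$ independent crossings, I iteratively add pairs $(X_i,Y_i)$ for $i=2,\dots,k$. In iteration $i$, I choose a point $p_i$ inside $S_u\cap S_v$ very close to $p_1$ but outside every previously placed shape, a tiny scaling factor $\lambda_i>0$, and place $X_i,Y_i$ as homothetic copies of $X_1,Y_1$ scaled by $\lambda_i$ and translated so they meet at $p_i$ in the same orientation. A sufficiently fast geometric decay of the $\lambda_i$ ensures that each new pair is disjoint from every previously placed shape except $S_u$ and $S_v$, so each $X_i,Y_i$ inherits the $\alpha$-freeness of the base pair, and its canonical polyline still crosses $uv$ by the orientation of its centers. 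Since the $2k$ new shapes are pairwise distinct, the $k$ constructed edges are pairwise independent.

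The principal obstacle will be the joint bookkeeping of $\alpha$-freeness for $S_u$ and $S_v$ once all $2k$ new shapes are present. Because $\alpha<s(n)$ is strict, the base configuration can be arranged with a positive slack in the $\alpha$-free condition of $S_u$ and $S_v$; and because the $\lambda_i$ decrease geometrically, the summed extra coverage of $S_u$ (and of $S_v$) contributed by the new pairs is an arbitrarily small tail that this slack absorbs. Once that bookkeeping is verified, the resulting collection $\mathcal{C}_k$ of $2k+2$ homothetic $n$-gons witnesses the proposition. The main technical lemma behind all of this is again the one implicit in $s(n)$: it is precisely the strict inequality $\alpha<s(n)$ that lets $X_i$ and $Y_i$ meet inside $S_u$ without covering a corner, which is what makes the whole shrinking sequence fit together while remaining $\alpha$-free.
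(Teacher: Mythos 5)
Your proposal has the right high-level shape (a base crossing pair plus an iterated, shrinking sequence of further pairs), but the way you anchor the iteration breaks the $\alpha$-freeness of the added shapes. You fix a point $p_1$ strictly inside $S_u \cap S_v$, choose meeting points $p_i$ ``very close to $p_1$'', and shrink $X_i, Y_i$ by geometrically decaying factors $\lambda_i$. Since $p_1$ lies at some fixed positive distance $d$ from the boundary of $S_u$, as soon as the diameter of $X_i$ drops below $d/2$ (which a geometric decay forces after finitely many steps) the shape $X_i$ is entirely contained in $S_u$, its free area is zero, and it is not $\alpha$-free for any $\alpha > 0$. The claim that $X_i, Y_i$ ``inherit the $\alpha$-freeness of the base pair'' is therefore false: the free fraction of a shape of size $\lambda_i$ near $p_1$ depends on the ratio of $\lambda_i$ to the distance from $p_i$ to the boundary of $S_u$, and scaling down at a fixed interior point sends that fraction to zero. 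A secondary problem is that you place the meeting points inside $S_u \cap S_v$, so each $X_i$ must be $\alpha$-free of the \emph{union} $S_u \cup S_v$; near $p_{uv}$ that union locally covers more than a half-plane, so the achievable free fraction there is strictly below $s(n)$, and the tipping-point characterization of $s(n)$ (which concerns a single covering $n$-gon) does not apply.

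The paper's proof avoids both issues by changing where the shapes accumulate. The crossing points are placed on the segment $l$ from the center of $S_u$ to a \emph{corner} $q$ of $S_u$, and they converge to $q$ --- a boundary point --- at the same rate at which the $X_i, Y_i$ shrink. Each pair lives inside a ball $B_{\varepsilon_{i-1}}[q]$ disjoint from everything placed before, straddles the boundary of $S_u$ near $q$ with roughly half its corners outside, and hence has free fraction just below $s(n)$, which suffices because $\alpha < s(n)$ is strict. Crucially, $S_v$ is not fixed in advance: it is a tiny $n$-gon placed \emph{last}, inside the final ball $B_{\varepsilon_k}[q]$, touching $S_u$ at $q$ and disjoint from every $X_i, Y_i$, so the crossing shapes only need to be free of $S_u$. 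If you redirect your accumulation point from the interior point $p_1$ to a corner of $S_u$ and delay the placement of $S_v$ in this way, your argument goes through; as written, it does not.
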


\begin{proof}
    For the case $n \equiv 0 \pmod 4$, we rotate the regular $n$-gon $\P_n$ such that it has four corners at its extreme $x$- and $y$-coordinates; a top, a bottom, a left, and a right corner.
    
    We start by placing a homothetic regular $n$-gon $S_u \subseteq \mathbb{R}^2$ with center $c$ and right corner $q$.
    We iteratively place $n$-gons $X_1,Y_1,\ldots,X_k,Y_k$ such that after step $i$, $1 \leq i \leq k$ the $n$-gons $X_1,Y_1,\ldots,X_i, Y_i$ are placed and the $i$ corresponding independent edges $x_1y_1,\ldots,x_iy_i$ all cross the line segment $l$ from the center $c$ of $S_u$ to $q$.
    Additionally, there are $\varepsilon_0 > \varepsilon_1 > \cdots > \varepsilon_k$ such that the $\varepsilon_i$-ball $B_{\varepsilon_i}[q]$ around $q$ is disjoint from $X_j$ and $Y_j$ whenever $j \leq i$.
    Clearly, all invariants hold before step~$1$ with $\varepsilon_0 > 0$ being any value small enough such that at least an $\alpha$-fraction of $S_u$ is not covered by $B_{\varepsilon_0}[q]$.

    In step~$i$, we consider the ball $B_{\varepsilon_{i-1}}[q]$ around $q$ that is disjoint from $X_1,Y_1,\ldots,X_{i-1},Y_{i-1}$.
    Let $X_i$ and $Y_i$ be (very small) homothetic $n$-gons inside $B_{\varepsilon_{i-1}}[q]$ such that the bottom corner of $X_i$ and the top corner of $Y_i$ coincide in a single point $p$ on $l - q$, as shown in \Cref{fig:CrossingUV8}.
    Note that $X_i$ and $Y_i$ have strictly more than $\norm{\P_n^{n/2}}$ of their area covered by $S_u$, i.e., strictly less then $\norm{\P_n^{n/2+2}}$ is free.
    Moreover, the closer $p$ is to $q$, the closer is their free fraction is to $\norm{\P_n^{n/2}+2} = s(n)$.
    Now we move $X_i,Y_i$ along $l$ until at least $\alpha < s(n)$ of each of their areas is not covered by $S_u$ and pick $\varepsilon_i > 0$ small enough so that $B_{\varepsilon_i}[q]$ is disjoint from $X_i$ and $Y_i$.
    Observe that our invariants hold again.
    
    After step $k$, we can place a (tiny) homothetic $n$-gon $S_v$ inside the ball $B_{\varepsilon_k}[q]$ such that its left corner coincides with $q$, which completes the construction for $n \equiv 0 \pmod 4$.

    \begin{figure}
        \centering
        \includegraphics{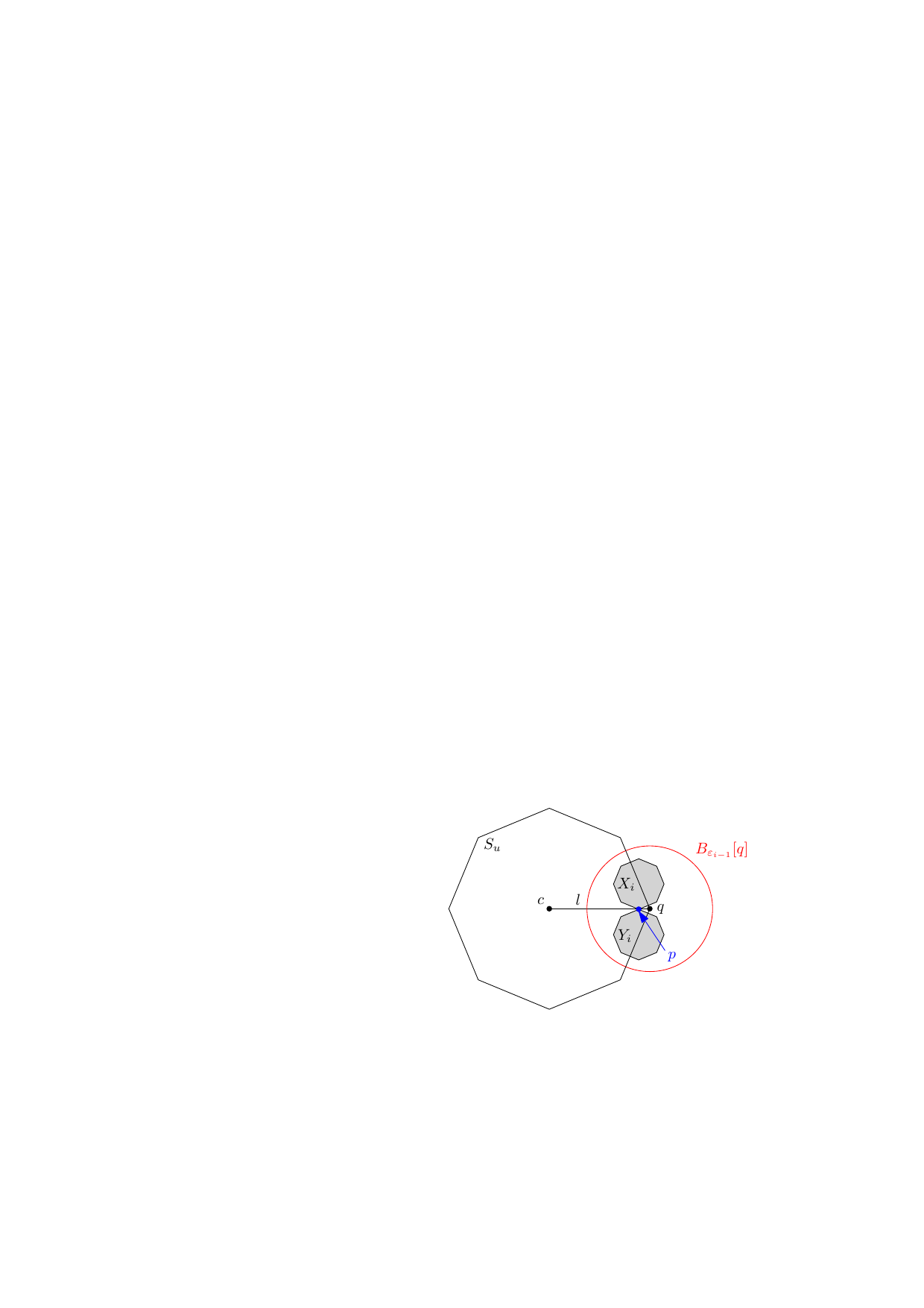}
        \caption{Placing $X_i, Y_i$ inside the $\varepsilon_{i-1}$ ball around $q$.}
        \label{fig:CrossingUV8}
    \end{figure}


    \begin{figure}
        \centering
        \includegraphics[page=2]{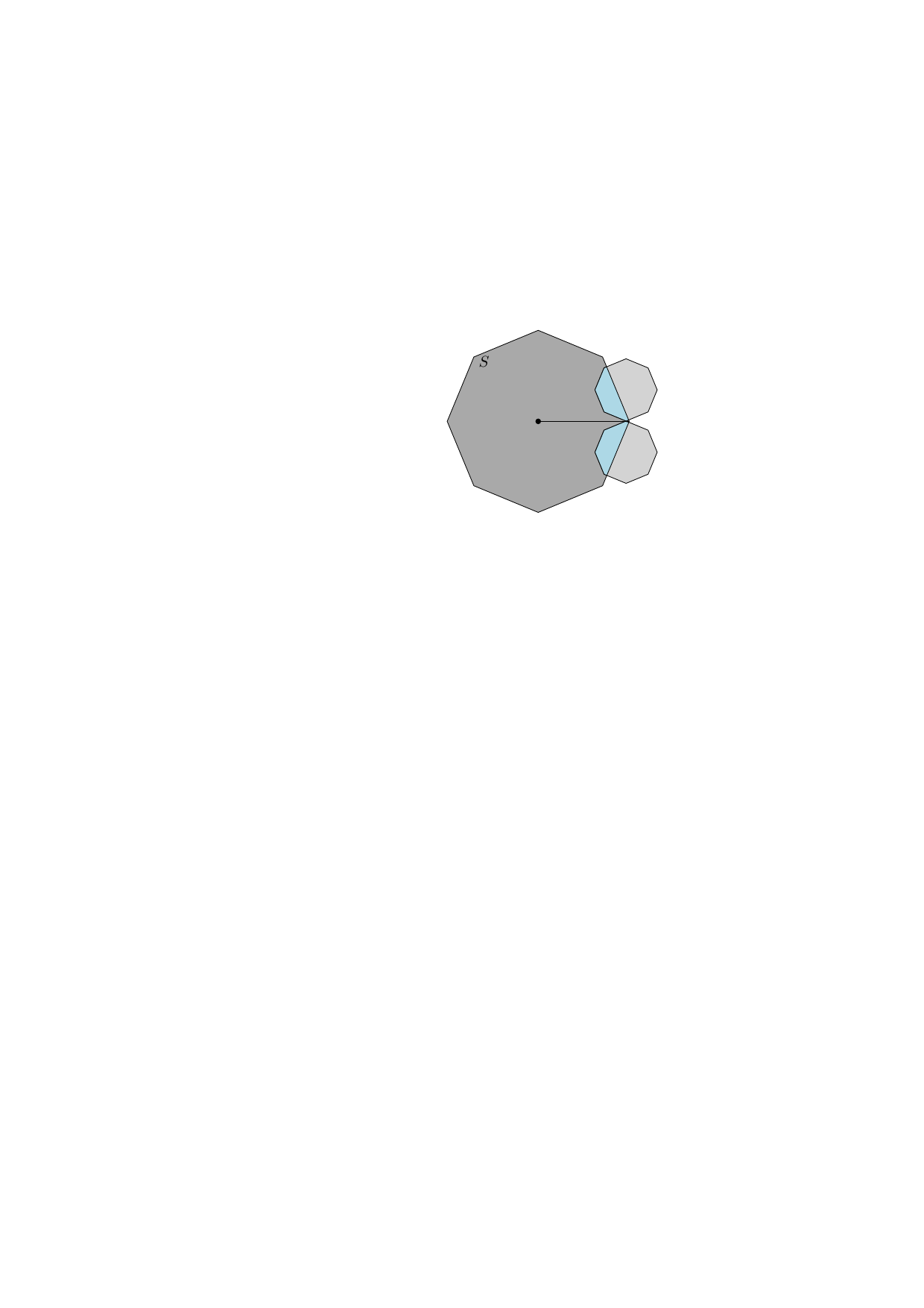}
        \hspace{2em}
        \includegraphics[page=2]{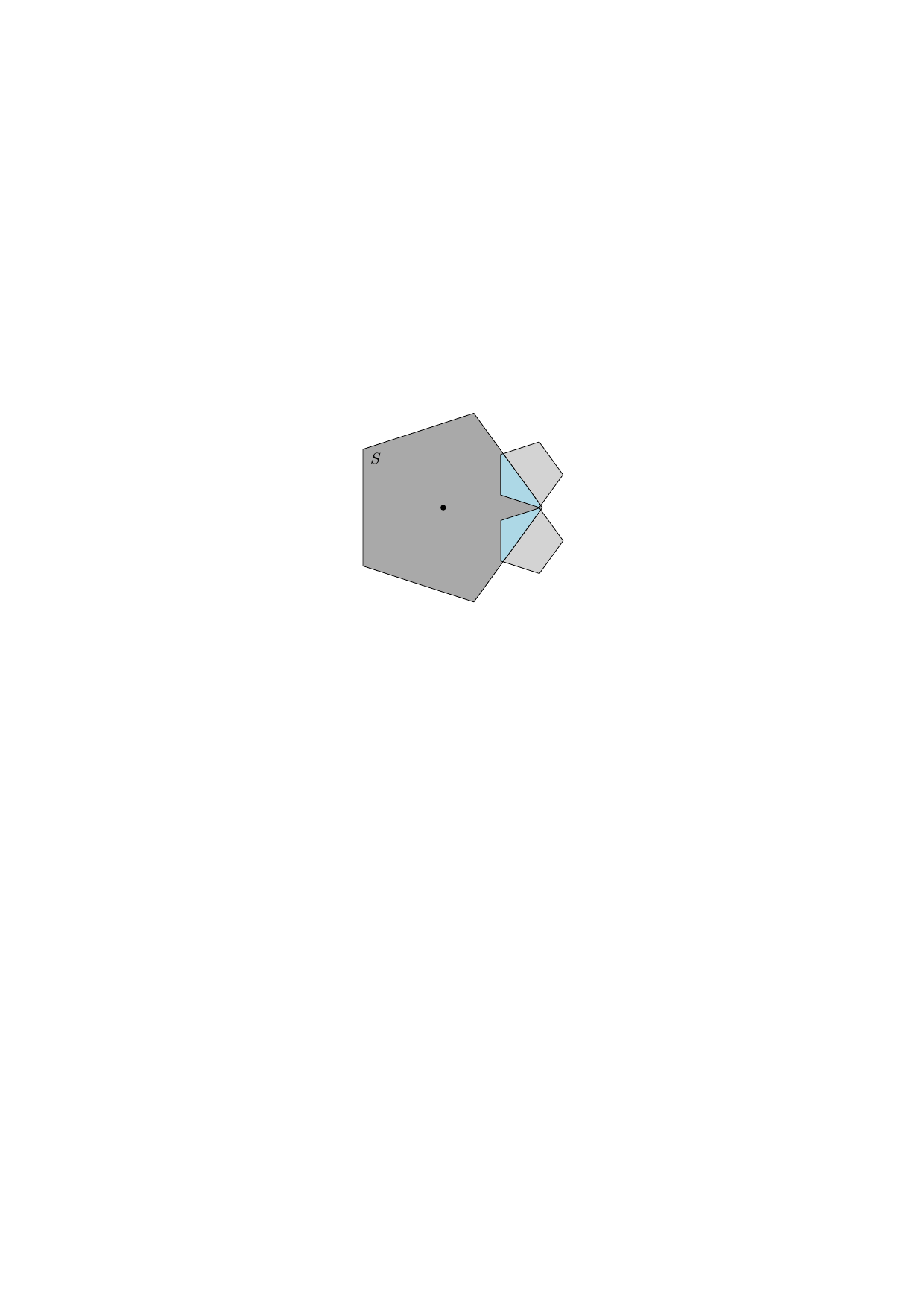}

        \vspace{-0.5ex}
        \includegraphics[page=2]{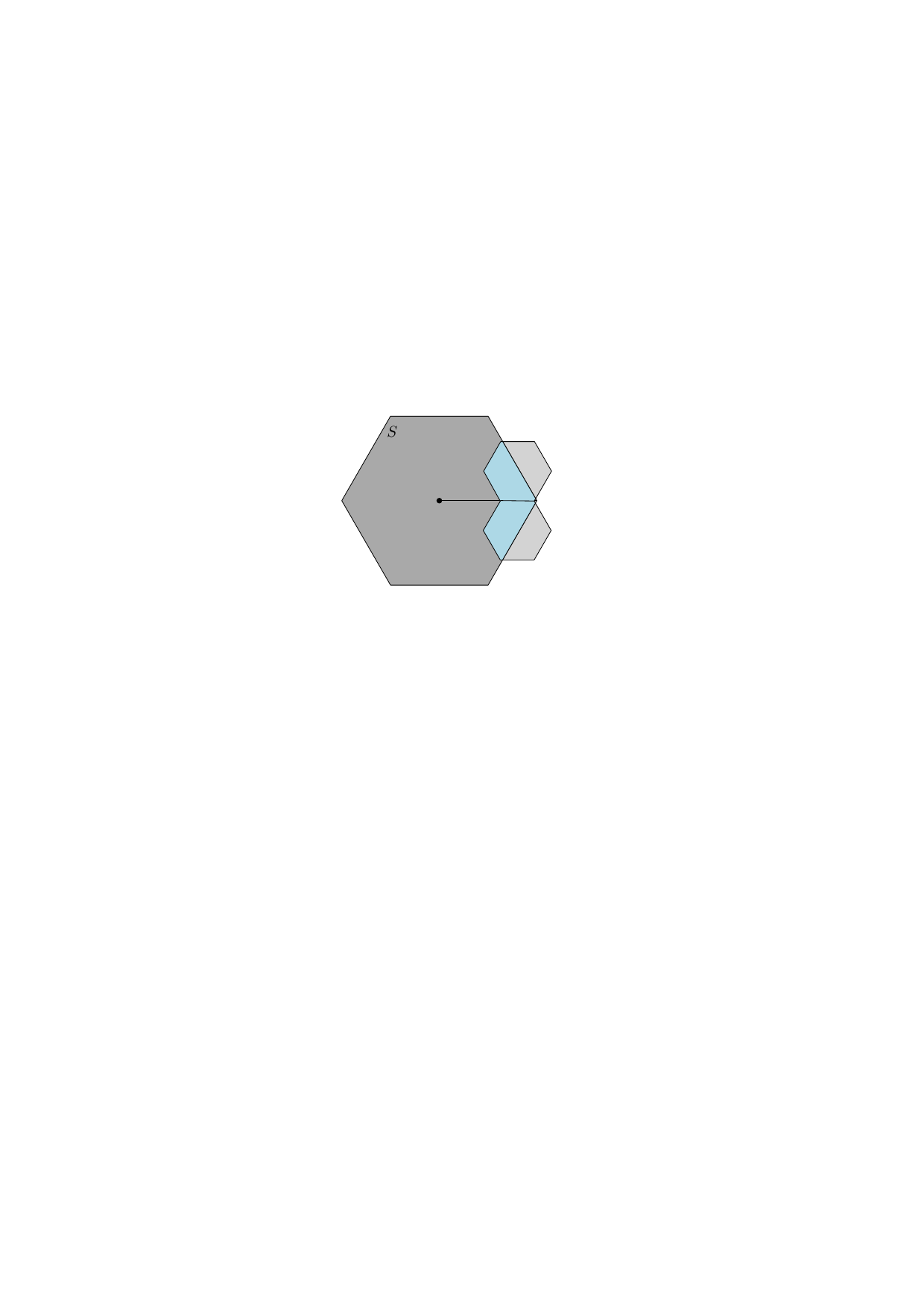}
        \hspace{2em}
        \includegraphics[page=2]{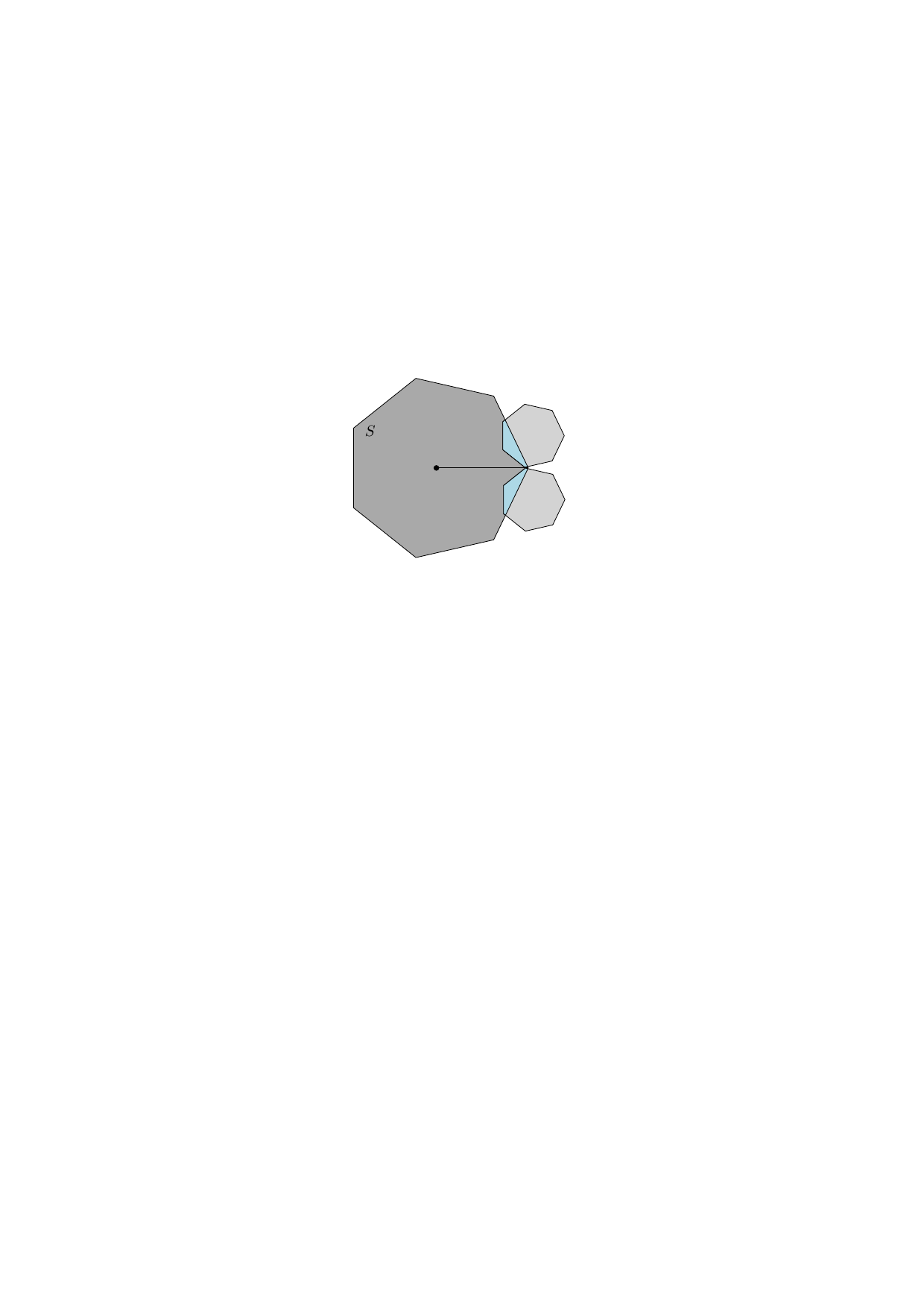}
        \caption{Placement of $X_1,Y_1$ in the cases $ n \equiv 0, 1, 2, 3 \mod 4 $}
        \label{fig:CrossingMod}
    \end{figure}

    If $n \not\equiv 1 \pmod 4$, then the regular $n$-gon $\P_n$ does not have a corner in each of its four extreme directions.
    Instead, we may have corners or entire sides.
    Nevertheless, we can assume that $\P_n$ has a right corner, and do the same construction as above for any $n$.
    However, depending on $n$, we get a different number of corners of $X_i$ and $Y_i$ inside $S_u$.
    For $n \equiv 1 \pmod  4$ the number of corners of $X_i$ inside of $S_u$ is $\lceil n/2 \rceil$ and thus roughly $\norm{\P_n^{\lceil n/2 \rceil +1}}$ of the area of $X_i$ is not covered by $S_u$.
    For $n \equiv 2 \pmod  4$ the number of corners is $n/2+1$ and roughly $\norm{\P_n^{n/2+1}} = \frac12$ is free.
    Lastly, for $n \equiv 3 \pmod  4$ the number of corners of $X_i$ inside of $S_u$ is $\lfloor n/2 \rfloor$ and thus roughly $\norm{\P_n^{\lceil n/2 \rceil +2}}$ of the area of $X_i$ is not covered by $S_u$.
    An example of the placement of $X_1,Y_1$ in the various cases is given in \Cref{fig:CrossingMod}.
%
\end{proof}

\subsection{\texorpdfstring{$\bm{k}$}{k}-independent crossing for \texorpdfstring{$\bm{\alpha \geq s(n)}$}{alpha >= s(n)}}
\label{sec:crossingKVertexCover}

We show that for every $n \geq 3$,  $\alpha \geq s(n)$, and for every collection of $\alpha$-free homothetic regular $n$-gons, the canoncial drawing $\Gamma$ of the corresponding intersection graph $G$ is $k$-independent crossing (cf.~\cref{def:k-independent-crossing}) for a global constant $k$ that depends only on $n$.

The reader might recall \cref{lem:3touching-overlap} stating that for large enough $\alpha$, no three $\alpha$-free $n$-gons have a common point.
We use a similar strategy to show that for any large enough $\alpha$ (in this case $\alpha \geq \frac12$), the number of $n$-gons containing a given point~$p$ is bounded.

\begin{restatable}{lemma}{manySharedPoint}
    \label{lem:bounded-clique-number}
    For a point $p \in \mathbb{R}^2$, $n\geq 5$, and $\alpha \geq \frac12$, there are at most $13$ regular $n$-gons $X \in \mathcal{C}$ with $p \in X$.
\end{restatable}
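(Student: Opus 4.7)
Plan: The argument will combine the pairwise overlap bound from $\alpha$-freeness with an angular pigeonhole argument on the tangent cones at $p$. Let $\mathcal{C}_p = \{X \in \mathcal{C} : p \in X\}$; I aim to show $|\mathcal{C}_p| \leq 13$.

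First, I would extract two consequences of the hypothesis $\alpha \geq \tfrac12$. The pairwise one: for any distinct $X, Y \in \mathcal{C}$ we have $|X \cap Y| \leq \tfrac12 \min(|X|, |Y|)$, since otherwise the smaller of the two has more than half of its area covered by the other, directly contradicting its $\alpha$-free condition. In particular no shape is contained in another. A slightly sharper consequence: if two shapes $X_j, X_k \in \mathcal{C}_p$ have $p$ at the same relative position inside $\P_n$ (e.g., both at the same corner), then $X_j$ and $X_k$ are concentric homothets with common dilation center $p$, so the smaller is contained in the larger, again a contradiction. This immediately bounds the number of shapes of $\mathcal{C}_p$ with $p$ at a corner by the number $n$ of corners of $\P_n$.

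Second, I would set up the angular picture at $p$. For each $X \in \mathcal{C}_p$, the tangent cone of $X$ at $p$ is an angular sector $A_X \subseteq [0, 2\pi)$ of measure at least $(1 - 2/n)\pi$, with equality precisely when $p$ is a corner of $X$ (this is the interior angle of $\P_n$). For $n \geq 5$ this is at least $\tfrac{3\pi}{5}$. Summing, $\sum_{X \in \mathcal{C}_p} |A_X| \geq \tfrac{3\pi}{5} \cdot |\mathcal{C}_p|$, so the average angular multiplicity around $p$ is at least $\tfrac{3}{10}\,|\mathcal{C}_p|$. If $|\mathcal{C}_p| \geq 14$, pigeonhole produces a direction $\theta^*$ contained in at least $\lceil 3 \cdot 14/10\rceil = 5$ tangent cones.

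The heart of the proof is then to rule out five shapes of $\mathcal{C}_p$ whose tangent cones at $p$ all contain a common direction $\theta^*$. Here I would exploit the rigidity of $\P_n$: each of the five shapes has its tangent cone at $p$ determined (up to angular position) by which feature of $\P_n$ the point $p$ coincides with -- one of $n$ corners (sector of size exactly $(1-2/n)\pi$), one of $n$ sides (a half-plane), or the interior (the full disk). The discrete set of feature-types yields a second pigeonhole: among the five shapes, two must have the same feature-type and hence nearly coinciding tangent cones, so by the nesting argument of the first paragraph and the $\alpha$-free overlap bound, one must contain too much of the other, a contradiction.

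The main obstacle will be this final geometric step: converting the coincidence of tangent-cone types into a quantitative lower bound on the area of overlap that exceeds $\tfrac12 \min(|X|, |Y|)$, so as to contradict $\alpha$-freeness. Handling the side-type and interior-type cases (where same feature-type does not force exact concentricity) requires tracking the positions and sizes of the shapes along the shared direction and arguing that two of them are close enough to a nested configuration that the overlap is forced to be large. The specific constant $13$ emerges from balancing the angular pigeonhole $\lceil 3N/10 \rceil \leq 4$ against the geometric feasibility of four shapes sharing a direction.
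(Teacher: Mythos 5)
Your setup (pairwise overlap at most $\tfrac12\min(\lVert X\rVert,\lVert Y\rVert)$, and the nesting of two homothets sharing the same corner at $p$) is fine, but the core of your argument has a genuine gap. First, the ``feature-type'' pigeonhole is miscounted: up to the homothety (no rotations), a shape containing $p$ has one of $2n+1$ exact feature types ($n$ corners, $n$ sides, interior), and $2n+1\geq 11>5$ for $n\geq 5$, so five shapes sharing a direction need not contain two of the same exact type. If you instead mean only the three coarse types, then two shapes of the same coarse type are in general nowhere near nested: two copies with $p$ on the same side can be translated arbitrarily along that side's direction relative to one another, and two copies with $p$ in their interiors are essentially unconstrained. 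Second, and more fundamentally, the whole strategy of deriving a contradiction from several tangent cones at $p$ sharing a direction cannot work: any shape with $p$ in its interior has the full disk as tangent cone, so arbitrarily many $\tfrac12$-free shapes can ``share'' every direction at $p$ --- indeed the lemma itself asserts that up to $13$ shapes may contain $p$, and all of them may contain $p$ in their interiors. Sharing a tangent direction is simply not an obstruction, so no quantitative overlap bound can be extracted from it.

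The quantity that actually forces overlap is the angular separation of the shape \emph{centers} as seen from $p$, which is what the paper uses. With $14$ shapes containing $p$, the extremal (overlap-minimizing) configuration is same-sized copies with $p$ on the boundary of their circumcircles and centers evenly spread, so two of them subtend an angle of only $2\pi/14$ at $p$; one then bounds the overlap of these two from below by sandwiching each $n$-gon between its incircle and circumcircle ($a_{\text{in}}\geq a_{\text{circ}}-c\cdot\Delta$ with $\Delta=1-\cos(\pi/n)$) and checks numerically that this exceeds half the polygon's area for $n\geq 5$. To repair your proof you would need to replace the tangent-cone pigeonhole by a pigeonhole on the directions $\overrightarrow{p\,c_X}$ to the centers, followed by such an explicit area computation for two shapes whose centers are angularly close at $p$.
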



\begin{proof}
    We prove a slightly stronger statement, namely that if there is a point $p$ that is contained in the circumcircle of 14 $n$-gons, then at least one of the $n$-gons has more than half of its area covered by others.
    We actually prove that for one of the 14 regular $n$-gons, $S$,
    the covered area of the incircle of $S$ is at least $\frac{\norm{S}}{2}$.
    That is, even assuming that the only intersection of $S$ with other shapes is in its incircle, more than half of the area of $S$ is covered, showing that $S$ is not $\alpha$-free.
    We compute that, if $S$ has circumradius~$1$, then $\frac{\norm{S}}{2} = n \cdot \sin(\frac{2\pi}{n})/4 \leq 1.2$, for $n \geq 5$.

    First observe that the shared area of 14 circles is minimized by same-size circles with the common point $p$ on the boundary, where the centers are distributed with angles $\frac{2\pi}{14}$ around~$p$.
    Similarly, the shared area of the incircles of 14 regular $n$-gons is minimized by shapes of the same size (say with circumradius~$1$) with $p$ on the boundary of the circumcircles and the centers evenly distributed around $ p $.
    That is, we aim to show that in such a configuration of shapes with circumradius~1, the shared area in each incircle is at least 1.2.

    \begin{figure}
        \centering
        \includegraphics[page=1]{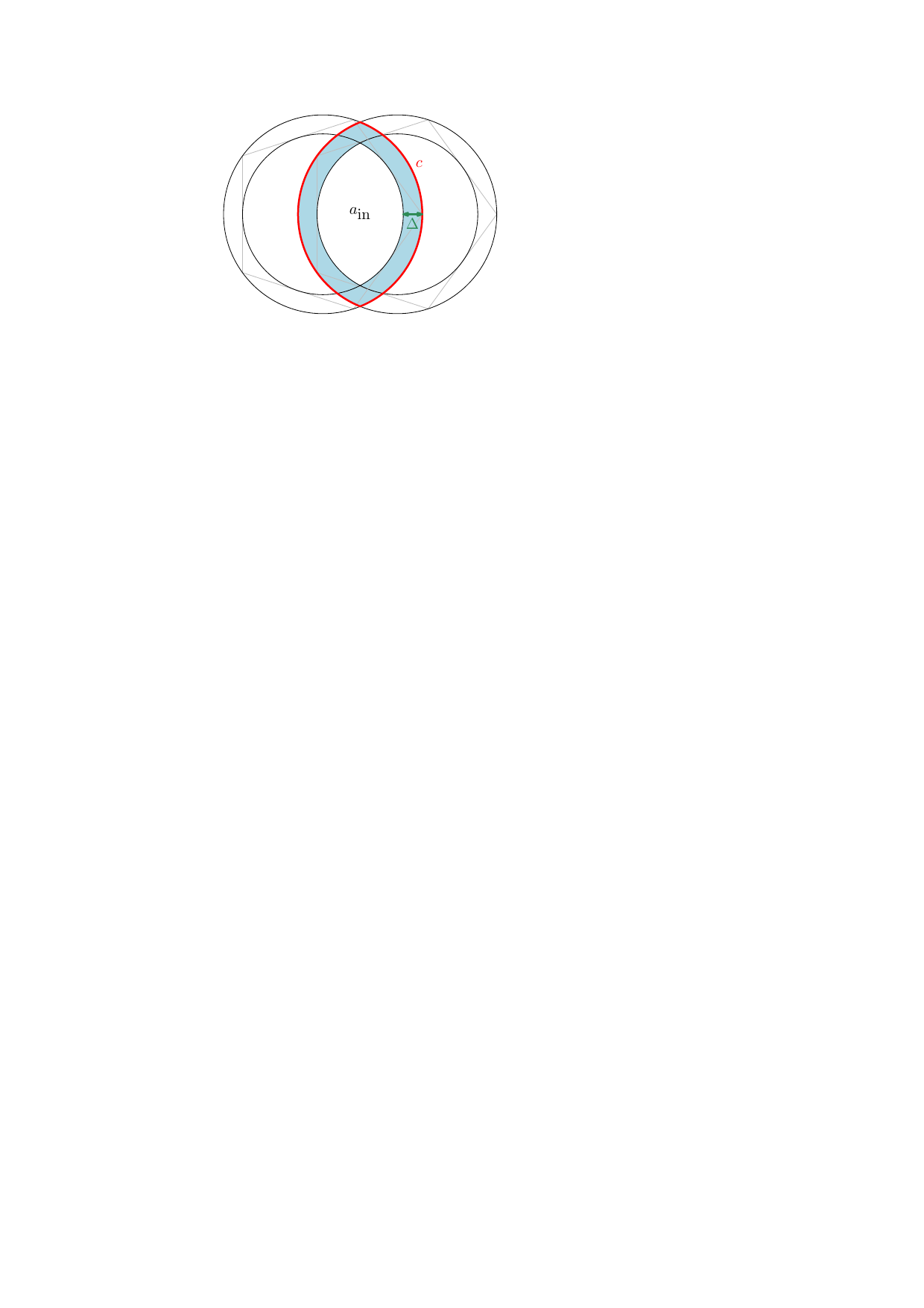}
        \hspace{2em}
        \includegraphics[page=2]{shared_point}
        \caption{
            Incircle and circumcircle of two regular 5-gons.
            The blue area is at most $ c \cdot \Delta $, i.e., we have $ a_{\text{in}} \geq a_{\text{circ}} - c \cdot \Delta $ (left).
            The angles between the centers is $2\pi/14$ and thus $ \theta = 12\pi/14$ (right).
        }
        \label{fig:shared_point}
    \end{figure}

    Let $ a_{\text{in}} $ and $ a_{\text{circ}} $ denote the area shared by two incircles, respectively two circumcircles, and let $ c_{\text{circ}} $ denote the circumference of the latter.
    Further let $ r = \cos(\frac{\pi}{n}) $ denote the inradius and let $ \Delta = 1 - r $ denote the difference between the inradius and the circumradius.
    See \cref{fig:shared_point} for an illustration of the notation.
    Now the shared area $ a_{\text{in}} $ of the incircles can be bounded by $ a_{\text{in}} \geq a_{\text{circ}} - c \cdot \Delta $.
    Using well-known formulas for area $ (\theta - \sin(\theta))/2 $ and arc length $ \theta $ of a unit disk segment with angle $ \theta $, we obtain the desired bound on $ a_{\text{in}} $ as follows (cf.~\cref{fig:shared_point}).
    First, the angle between the two intersection points of the circumcircle is $ \theta = \pi - 2 \cdot 2 \pi/(14 \cdot 2) = 12\pi/14 $.
    Next, the circumference $ c $ is twice the arc length, i.e., $ c = 2 \theta $,
    the shared area of two circumcircles is twice the area of the disk segments, i.e., $ a_{\text{circ}} = \theta - \sin(\theta) $,
    and for the difference of the radii we have $ \Delta = 1 - r = 1 - \cos(\pi/n) $.
    Together, we obtain 
    $ a_{\text{in}} \geq a_{\text{circ}} - c \cdot \Delta %
    = \theta - \sin(\theta) - 2 \theta \cdot (1 - \cos(\pi/n)) %
    \geq 1.2 $ for $ n \geq 5 $, as desired.
\end{proof}

The next lemma crucially exploits that $ s(n) $ is chosen as the tipping point whether or not two regular $n$-gons can meet in a third regular $n$-gon without containing a corner of the latter.

\begin{lemma}
    \label{lem:must-hit-p-or-corner}
    Let $X$ be a homothetic copy of $\P_n$.
    Let $c$ be the center, $Q$ be the set of corners, and $p$ be any fixed point on the boundary of $X$.
    Further, let $Y$ be another homothetic copy of $\P_n$ such that $Y \cap \overline{cp} \neq \emptyset$.
    Then at least one of the following holds.
    \begin{itemize}
        \item $\frac{\norm{Y - X}}{\norm{Y}} < s(n)$, i.e., less than an $s(n)$-fraction of $Y$ is not covered by $X$, or
        \item $Y \cap (Q \cup \{p\}) \neq \emptyset$, i.e., $Y$ contains point $p$ or a corner of $X$.
    \end{itemize}
\end{lemma}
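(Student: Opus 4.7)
I prove the contrapositive: assume $Y \cap \overline{cp} \neq \emptyset$ and $Y \cap (Q \cup \{p\}) = \emptyset$, and conclude $\norm{Y-X}/\norm{Y} < s(n)$. If $Y \subseteq X$, then $\norm{Y-X}=0$ and the bound is trivial, so assume $Y \not\subseteq X$. Since $X$ and $Y$ are both homothets of $\P_n$, they share the same orientation, and in particular every side of $Y$ is parallel to some side of $X$. Because $Y$ avoids the corners of $X$, each connected component of $Y \setminus X$ is a convex ``cap'' cut off by a single side of $X$; no cap can wrap around a corner of $X$.

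I would then recast the statement as an optimization problem. Let $\mathcal{F}$ be the space of all homothets $Y'$ of $\P_n$ with $Y' \cap \overline{cp} \neq \emptyset$, parameterized by center and scale, and let $f(Y') = \norm{Y' - X}/\norm{Y'}$; this $f$ is continuous on $\mathcal{F}$. The avoidance condition $Y' \cap (Q \cup \{p\}) = \emptyset$ defines an open subregion $\mathcal{F}^{\circ} \subseteq \mathcal{F}$. My strategy is to show that $\sup_{\mathcal{F}} f = s(n)$ and that the supremum is attained only on the closed boundary $\mathcal{F} \setminus \mathcal{F}^{\circ}$, i.e.\ in configurations where $Y'$ touches $p$ or a corner of $X$. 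The lemma then follows immediately from openness: $Y \in \mathcal{F}^{\circ}$ implies $f(Y) < s(n)$.

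To pin down the extremal configurations attaining $s(n)$, I would argue that the supremum is approached by shrinking $Y'$ and sliding it outward along $\overline{cp}$ so that the corner of $Y'$ whose outward direction is closest to $\vec{cp}$ aligns with $p$ or with a corner of $X$, mirroring the construction inside the proof of \cref{prop:alpha-small-no-independent}. In such a tipping configuration, the portion of $Y'$ lying outside $X$ is exactly an $n$-gon segment of $Y'$ with $m$ corners, where by a direct check of the four parity classes of $n$ (matching the four pictures in \cref{fig:snIdea}) we have $m$ equal to $n/2+2$, $\lceil n/2 \rceil+1$, $n/2+1$, or $\lceil n/2 \rceil+2$ for $n \equiv 0, 1, 2, 3 \pmod 4$, respectively. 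By the definition~\eqref{eq:threshold-definition}, the free area of this extremal $Y'$ equals $\norm{\P_n^m} \cdot \norm{Y'} = s(n) \cdot \norm{Y'}$.

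The main obstacle is the detailed geometric verification that these tipping configurations really do maximize $f$ on $\mathcal{F}$, and that strict avoidance of $p$ and $Q$ forces a strict decrease. This requires a careful corner-counting argument: a cap of $Y'$ bounded by a side $e$ of $X$ can absorb one additional corner of $Y'$ only as that corner of $Y'$ slides across $e$, and the endpoints of $e$ (both in $Q$) together with the point $p$ are precisely the obstructions that cap the admissible corner count at the extremal values above. Combined with the single-cap reduction from the first paragraph (forced by $Y \cap \overline{cp} \neq \emptyset$, which pins a bulk portion of $Y$ inside $X$), this yields $f(Y) < s(n)$ whenever $Y$ strictly avoids $p$ and $Q$, completing the proof.
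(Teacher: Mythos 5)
Your high-level plan has the right shape (contrapositive, caps cut off by single sides of $X$, tipping configurations indexed by $n \bmod 4$), and it matches the paper's picture of the extremal situation. But there is a genuine gap: you explicitly defer ``the detailed geometric verification that these tipping configurations really do maximize $f$'' and call it ``the main obstacle'' --- that verification \emph{is} the lemma, so the proposal is an outline rather than a proof. Moreover, the optimization framing is set up incorrectly: $\sup_{\mathcal{F}} f = s(n)$ is false, since on $\mathcal{F}$ (no avoidance condition) the fraction of $Y'$ outside $X$ can be made arbitrarily close to $1$ (e.g.\ a large $Y'$ containing $X$, or a tiny $Y'$ meeting $\overline{cp}$ only at $p$ from outside). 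The configurations violating the avoidance condition are exactly where $f$ exceeds $s(n)$; that is why the lemma is nontrivial, and the ``openness'' argument buys nothing until one has proved $f < s(n)$ on $\mathcal{F}^{\circ}$ directly. (Also, $\mathcal{F}$ is not compact, so the supremum need not be attained anywhere, which breaks the ``attained only on the boundary'' formulation.)

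Concretely, the steps the paper carries out and your proposal omits are: (a) once $Y$ contains no corner of $X$ it crosses a single side $s$, so $Y - X = Y - h$ for the halfplane $h$ supported by $s$, giving $\norm{Y-X}/\norm{Y} \leq \norm{\P_n^{m}}$ where $m$ is the number of corners of $Y$ outside or on the boundary of $X$; the case $m < m^*$ is then immediate from the definition of $s(n) = \norm{\P_n^{m^*}}$; (b) in the remaining case $m \geq m^*$, the center of $Y$ lies outside $X$ and $Y$ is small relative to $X$, so the hypothesis $Y \cap \overline{cp} \neq \emptyset$ forces $p$ to lie on $s$ or on a side adjacent to $s$; (c) one then reduces to $p$ being a corner of $X$ by sliding $Y$ and $p$ in parallel along $s$, after which the definition of $s(n)$ as the tipping point applies. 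Your proposal never uses $Y \cap \overline{cp} \neq \emptyset$ beyond declaring the search space, and the claim that it ``pins a bulk portion of $Y$ inside $X$'' is inaccurate --- in the extremal configuration for $n \equiv 0 \pmod 4$ strictly more than half of $Y$ lies outside $X$. What the hypothesis actually does, jointly with the corner count $m \geq m^*$, is constrain the location of $p$ relative to the crossed side; without that interplay the statement is false (a small $Y$ poking out of the middle of a side far from $p$ has almost all of its area free). To complete the proof you would need to supply steps (a)--(c) or an equivalent argument.
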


\begin{proof}
    As we are done otherwise, we assume that $Y$ contains no corner of $X$ and that $Y$ is not completely contained in $X$.
    Then $Y$ intersects exactly one side $s$ of $X$.
    Let $h$ be the halfplane supported by $s$ that contains $X$.
    Then $Y - X = Y - h$, and hence $\norm{Y - X} / \norm{Y} \leq \norm{\P_n^{m}}$ where $m$ is the number of corners of $Y$ that lie outside of $X$ or on the boundary of $X$.
    For convenience let
    \begin{equation*}
        m^* =
            \begin{cases*}
                n/2+2                      & if $n \equiv 0 \pmod  4$ \\
                \lceil n/2 \rceil+1      & if $n \equiv 1 \pmod  4$ \\
                n/2+1              & if $n \equiv 2 \pmod  4$ \\
                \lceil n/2 \rceil+2    & if $n \equiv 3 \pmod  4$.
            \end{cases*}
    \end{equation*}
    I.e., $s(n) = \norm{\P_n^{m^*}}$, and we are done if $m < m^*$.
    So assume that $Y$ has $m \geq m^*$ corners outside or on the boundary of $X$, and note that this implies that the center of $Y$ lies outside or on the boundary of $X$.
    Also note that the radius of $ Y $ is smaller than the side length of $ X $.
    Together, it follows that the point $p$ either lies on the side $s$ of $X$ that is intersected by $Y$ or on an adjacent side $s'$ of $X$.
    Also we assume that $p \notin Y$, as otherwise we are done.

    Next, we shall argue that we may assume that $p$ is a corner of $X$.
    We actually only need $ p $ to satisfy the condition that $Y$ contains neither a corner of $ X $ nor $ p $ but intersects a side of $ X $ adjacent to $ p $ and the line segment $\overline{cp}$.
    In case that $p$ lies on a side $s' \neq s$ adjacent to $s$, then the corner $p'$ of $X$ where $s$ and $s'$ meet fulfills the same condition.
    In the other case that $p$ lies on the side $s$ that $Y$ intersects, we can simultaneously move $p$ and $Y$ parallel to $s$ until $p$ coincides with the corner $p'$ of $s$ on the far end of $Y$ (recall that $p \notin Y$).
    Seen from $Y$, this only changes the angle of the line segment $\overline{cp}$, making it lean even more towards $Y$.
    Again, the new situation fulfills the same condition.

    Finally, we face the situation of $p$ being a corner of $X$, while $Y$ contains no corner of $X$ but intersects a side $s$ of $X$ incident to $p$ and intersects the line segment $\overline{cp}$.
    Then the portion of $\norm{Y}$ outside $X$ is, by definition, strictly less than $s(n)$, which concludes the proof.
\end{proof}

With \cref{lem:must-hit-p-or-corner,lem:bounded-clique-number} at hand, we can now prove the following.

\begin{proposition}
    \label{prop:alpha-large-independent}
    Let $n \geq 3$ and $\alpha \geq s(n)$ be fixed.
    Then for any collection $\mathcal{C} = \{S_v\}_{v \in V}$ of $\alpha$-free homothetic regular $n$-gons with intersection graph $G = (V,E)$, the canonical drawing $\Gamma$ of $G$ is $26(n+1)$-independent crossing.
\end{proposition}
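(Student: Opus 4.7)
The plan is to fix an edge $uv \in E(G)$ and bound by $26(n+1)$ the number of pairwise independent edges of $G$ that cross $uv$ in $\Gamma$. The key idea is to identify, for each crossing edge, a designated endpoint whose $n$-gon must contain one of at most $2(n+1)$ distinguished ``witness points'' on $S_u \cup S_v$. Combining this with \cref{lem:bounded-clique-number} then caps the number of such endpoints, and independence immediately translates the cap into the claimed edge bound.

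Before analysing crossings, I would invoke the convention at the end of \cref{sec:embedding}: vertices are identified with centers and bends with contact points, so that $uv$ is the polyline $\overline{c_u p_{uv}} \cup \overline{p_{uv} c_v}$. Since $p_{uv}$ lies in the interior of $S_u$ (and of $S_v$), whereas \cref{lem:must-hit-p-or-corner} requires a point on the boundary, I would extend the ray from $c_u$ through $p_{uv}$ to its first hit with $\partial S_u$, denoting this boundary point by $p'_u$; let $p'_v$ be defined analogously on the $v$-side. Then $\overline{c_u p_{uv}} \subseteq \overline{c_u p'_u}$ and analogously for $v$.

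The main step is to take any edge $xy$ crossing $uv$ and, without loss of generality, assume the crossing lies in the sub-segment $\overline{c_u p_{uv}}$. One of the two sub-segments of $xy$, either $\overline{c_x p_{xy}}$ or $\overline{p_{xy} c_y}$, realises this crossing; let $w \in \{x,y\}$ be the corresponding endpoint, so that $S_w$ intersects $\overline{c_u p_{uv}} \subseteq \overline{c_u p'_u}$. Applying \cref{lem:must-hit-p-or-corner} with $X = S_u$, $c = c_u$, $p = p'_u$ and $Y = S_w$, the $\alpha$-freeness of $S_w$ together with $\alpha \geq s(n)$ yields $\norm{S_w - S_u}/\norm{S_w} \geq s(n)$, which rules out the first alternative of the lemma. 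Hence $S_w$ contains $p'_u$ or a corner of $S_u$; the symmetric argument handles crossings in $\overline{p_{uv} c_v}$ with $\{p'_v\}$ and the corners of $S_v$.

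To conclude, let $V^* \subseteq V$ collect the vertices whose shape contains at least one of the at-most-$2(n+1)$ witness points $\{p'_u, p'_v\}$ together with the corners of $S_u$ and $S_v$. By \cref{lem:bounded-clique-number}, which applies since $\alpha \geq s(n) \geq \tfrac12$ and $n \geq 5$, each such point lies in at most $13$ shapes, so $|V^*| \leq 13 \cdot 2(n+1) = 26(n+1)$. Since every crossing edge of $uv$ has a designated endpoint in $V^*$ and pairwise independent edges have pairwise disjoint endpoint sets, there can be at most $|V^*| \leq 26(n+1)$ such edges. The main obstacles I anticipate are (i) cleanly checking that the boundary extension from $p_{uv}$ to $p'_u$ preserves the crossing witness (so that the realising sub-segment of $xy$ does meet the longer segment $\overline{c_u p'_u}$), and (ii) handling the small cases $n \in \{3,4\}$ separately, where \cref{lem:bounded-clique-number} is not directly stated but $s(n) = 1$ forces a contact representation and the bound becomes essentially trivial.
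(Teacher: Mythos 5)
Your proposal is correct and follows essentially the same route as the paper: decompose $uv$ into its two sub-segments, use \cref{lem:must-hit-p-or-corner} to force each crossing edge's witness shape onto one of $n+1$ points per side, and finish with \cref{lem:bounded-clique-number} and the disjointness of independent edges' endpoints, with $n\in\{3,4\}$ handled separately via $s(n)=1$. The only (harmless) difference is that the paper applies \cref{lem:must-hit-p-or-corner} to the scaled-down copy $S'_u$, on whose boundary $p_{uv}$ already lies, whereas you apply it to $S_u$ after extending the ray to the boundary point $p'_u$ — both yield the same $2(n+1)$ witness points and the bound $26(n+1)$.
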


\begin{proof}
    For $n \in \{3,4\}$, we have $s(n) = 1$.
    Hence, $\Gamma$ is planar for $n=3$ and $1$-planar for $n=4$, which is more than enough.
    For $n \geq 5$, let $uv \in E$ be any fixed edge in $G$.
    Our task is to bound the number of independent edges in $G$ that cross $uv$ in~$\Gamma$.

    Recall from \cref{obs:drawing}, that $S'_u \subseteq S_u$ and $S'_v \subseteq S_v$ are interiorly disjoint but touching homothetic $n$-gons, and edge $uv$ is drawn as a polyline from $u$ ($\varepsilon$-close to the center of $S_u$) to $v$ ($\varepsilon$-close to the center of $S_v$) with one bend point $\varepsilon$-close to $p_{uv} \in S'_u \cap S'_v$.
    Actually, we may assume without loss of generality that $u$ is the center of $S'_u$, $v$ is the center of $S'_v$, and edge $uv$ bends exactly at $p_{uv}$.
    Then $uv$ consists of two line segments $\overline{up_{uv}}$ and $\overline{p_{uv}v}$, and it is enough to bound the number of edges that cross one of these line segments in~$\Gamma$, say $\overline{up_{uv}}$.

    For every edge $xy$ that crosses $\overline{up_{uv}}$ in $\Gamma$, we have $S_x \cap \overline{up_{uv}} \neq \emptyset$ or $S_y \cap \overline{up_{uv}} \neq \emptyset$, or both.
    Let $A = \{y \in V \mid S_y \cap \overline{up_{uv}} \neq \emptyset\}$ be the subset of vertices of $G$ whose corresponding sets intersect $\overline{up_{uv}}$.
    Crucially, all edges of $G$ that cross $uv$ in $\Gamma$ along the line segment $\overline{up_{uv}}$ have an endpoint in $A$.
    Since $S'_u \subseteq S_u$, we have 
    \[
        \frac{\norm{S_y - S'_u}}{\norm{S_y}} \geq \frac{\norm{S_y - S_u}}{\norm{S_y}} \geq \alpha \geq s(n)
    \]
    for each $y \in A$.
    Hence, by \cref{lem:must-hit-p-or-corner}, each such $S_y$ must contain the point $p_{uv}$ or (at least) one of the $n$ corners of $S'_u$.
    \cref{lem:bounded-clique-number} says that at most~$13$ such $S_y$ can contain the same point, and thus $|A| \leq 13 \cdot (n+1)$.
    In other words, at most $13(n+1)$ independent edges of $G$ cross $uv$ in $\Gamma$ along the line segment $\overline{up_{uv}}$.
    Symmetrically, at most $13(n+1)$ independent edges cross $\overline{p_{uv}v}$, and thus $\Gamma$ is $k$-independent crossing for $k = 26(n+1)$.
\end{proof}

Finally, \cref{prop:alpha-large-independent,prop:alpha-small-no-independent} together prove \cref{thm:results_k-independent-crossing}.

\resultskindependentcrossing*

\section{Conclusion}
\label{sec:conclusion}

It remains an intriguing problem to determine for the regular $n$-gon $\P_n$ the threshold $\alpha^*(\P_n)$ such that the class $\mathcal{G}(\P_n,\alpha)$ of intersection graphs of $\alpha$-free homothetic copies of $\P_n$ admits product structure for $\alpha > \alpha^*(\P_n)$ and no product structure for $\alpha < \alpha^*(\P_n)$.

With $s(n)$, as defined in~\eqref{eq:threshold-definition}, we determined the exact threshold for $\alpha$ such that for $\alpha < s(n)$ arbitrarily many independent edges can cross a single edge in canonical drawings.
While this is exactly the crucial ingredient, we can not construct the nested grids for $\mathcal{G}(\P_n,\alpha)$ with $\alpha \approx s(n)$, unless $n \in \{4,6\}$.
Still, we suspect an alternative construction to work.

\begin{conjecture}\label{con:NoProductStructure}
    For every $\alpha < s(n)$, the class of intersection graphs of $\alpha$-free homothetic regular $n$-gons does not have product structure.
\end{conjecture}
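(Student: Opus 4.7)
The plan is to extend the nested grid construction from Section~\ref{sec:gridConstruction} to work for all $n$ whenever $\alpha<s(n)$, rather than only in the special cases $n\in\{4,6\}$ handled by \cref{thm:Regular2NGons}. The crucial ingredient -- crossing one edge by arbitrarily many independent edges in an $\alpha$-free representation -- is already supplied by \cref{prop:alpha-small-no-independent}. What is missing is a way to install these crossings at every subdivision-edge of a globally consistent grid geometry, so the resulting intersection graph contains a subdivision of a $k^2\times k^2$-grid inside the neighborhood of a single vertex and therefore fails to have linear local treewidth.

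First I would build the large grid $G_{k,1}$ with regular $n$-gons, orienting them so that every pair of adjacent subdivision shapes $S_u,S_v$ meets in a configuration matching the setup of \cref{prop:alpha-small-no-independent}: a point $q$ that is a corner of $S_v$ lying on a side (or corner) of $S_u$, with the segment from the centre of $S_u$ to $q$ aligned to receive the crossing edges. Depending on $n\bmod 4$, different global patterns of rotations, sizes, and alternating orientations along rows versus columns may be required so that such a contact is possible at every grid edge simultaneously; this is the combinatorial analogue of the four cases in the definition of $s(n)$.

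Next, at each such contact point $q$, I would invoke the iterative construction from the proof of \cref{prop:alpha-small-no-independent}: place pairs $X_i,Y_i$ of small homothets of $\P_n$ in a shrinking sequence of balls $B_{\varepsilon_{i-1}}[q]$, producing $k$ pairwise disjoint crossings of the segment $\overline{cq}$ where $c$ is the centre of $S_u$. By the proposition, for any $\alpha<s(n)$ each $X_i,Y_i$ can be made $\alpha$-free with respect to $S_u$ alone. Finally, I would connect the endpoints $x_i,y_i$ to a $k\times k$-grid in each cell via corridors of $1$-free tiny $n$-gons along the cell boundary, as in Step~3 of \cref{sec:gridConstruction}, keeping the radius in $\mathcal{O}(k)$ and forcing treewidth $\Omega(k^2)$.

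The main obstacle is the geometric tension in the middle step: in \cref{prop:alpha-small-no-independent} the polygon $S_v$ is only inserted at the end as a tiny shape inside $B_{\varepsilon_k}[q]$, whereas here $S_v$ has the same size as $S_u$ and sits on the far side of $q$. When $s(n)$ is close to $\tfrac12$ (particularly for $n\equiv 2\pmod 4$), the free parts of $X_i$ and $Y_i$ extend deep outside $S_u$, and one must verify that this free region points into an open sector not occupied by $S_v$ or by any other nearby subdivision shape. I expect this to force a tailored local layout of the large grid for each residue class $n\bmod 4$, and possibly non-uniform subdivision-shape sizes, so that the sector of $\mathbb{R}^2$ into which $X_i,Y_i$ protrude remains genuinely free. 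Once such a layout is found, the remainder -- checking $\alpha$-freeness of every inserted shape, verifying that the resulting graph contains a subdivided $k^2\times k^2$-grid inside a neighbourhood of bounded radius, and concluding failure of linear local treewidth via~\cite[Lemma 6]{DUJMOVIC2017111,PlanarGraphsQueueNumber} -- is a routine adaptation of the arguments already present in \cref{sec:N46NoProductStruture,sec:N3nNoProductStruture}.
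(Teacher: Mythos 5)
This statement is an open conjecture in the paper, not a theorem: the authors explicitly write in \cref{sec:conclusion} that they ``can not construct the nested grids for $\mathcal{G}(\P_n,\alpha)$ with $\alpha \approx s(n)$, unless $n \in \{4,6\}$.'' Your proposal does not close that gap; it restates it. You correctly identify the essential obstruction --- in \cref{prop:alpha-small-no-independent} the second endpoint shape $S_v$ is inserted only at the very end as a tiny homothet inside $B_{\varepsilon_k}[q]$, so the crossing gadgets $X_i,Y_i$ only ever need to be $\alpha$-free relative to $S_u$ --- but then you write ``I expect this to force a tailored local layout\ldots Once such a layout is found, the remainder is routine.'' That conditional is precisely the missing proof. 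In the nested-grid construction, $S_v$ is a full-size subdivision shape sitting immediately on the far side of $q$, and the $X_i,Y_i$ protrude out of $S_u$ exactly into the sector that $S_v$ occupies. For $\alpha$ just below $s(n)$ the shapes $X_i,Y_i$ already have essentially an $s(n)$-fraction (for $n\equiv 2\pmod 4$, barely more than half) of their area free of $S_u$ alone; any additional positive overlap with $S_v$ or with neighboring grid shapes destroys $\alpha$-freeness. This is exactly why \cref{thm:Regular2NGons} only achieves the much weaker threshold $\norm{\P_{2n}^4}$: there the crossing shapes must stay free of \emph{both} $S_u$ and $S_v$, leaving only four corners' worth of free area. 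The interval between $\norm{\P_{2n}^4}$ and $s(2n)$ (depicted in \cref{fig:AlphaInterval}) is the open part of the conjecture, and nothing in your plan narrows it.

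Concretely, if you carried out your steps as written you would at best reprove \cref{thm:Regular2NGons}, not the conjecture. A genuine proof would need either (a) a crossing gadget in which the full-size $S_v$ is displaced or resized so that it intersects the connecting paths combinatorially without covering any area of the $X_i,Y_i$ (which seems geometrically impossible near a shared corner for $\alpha$ close to $s(n)$), or (b) a construction with superlinear local treewidth that does not route $k$ independent edges through a single point contact of two equal-size subdivision shapes at all --- i.e., the ``alternative construction'' the authors say they suspect exists but cannot exhibit. Your proposal supplies neither.
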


On the other hand, for $\alpha \geq s(n)$, the canonical drawings of $\mathcal{G}(\P_n,\alpha)$ are $k$-independent crossing.
We prove this for $k = 26(n+1)$ (cf.\ \cref{prop:alpha-large-independent}) but suspect that a constant $k$ independent of $n$ should suffice.
As already conjectured in the introduction, we believe that graph classes with $k$-independent crossing drawings have product structure.

\begin{conjecture}
    The class of $k$-independent crossing graphs admits product structure.
\end{conjecture}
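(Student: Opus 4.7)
The plan is to attack this conjecture by combining planarization with the planar product structure theorem, adapting the strategy that has worked for $k$-planar, fan-planar, and related beyond-planar classes. Starting from a $k$-independent crossing drawing $\Gamma$ of $G$, replace every crossing by a dummy vertex of degree~$4$ to obtain a planar supergraph $G^+$. By the planar product structure theorem~\cite{ImprovedPlanarGraphProductStructureTheorem}, we have $G^+ \subseteq H^+ \boxtimes P$ for a $6$-tree $H^+$ and a path $P$; write $\mathrm{row}(v)$ for the vertex of $P$ in which $v$ lies. The goal is then to transform this decomposition into a product structure for $G$ itself by re-identifying the edges of $G$ with their corresponding paths in $G^+$.

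The key structural input from the $k$-independent crossing condition is the following. For every edge $e = uv \in E(G)$, the set of edges of $G$ crossing $e$ induces a graph with matching number at most $k$, hence (by the Gallai bound) it admits a vertex cover $C_e \subseteq V(G)$ with $|C_e| \leq 2k$; every crossing on $e$ is realised by an edge incident to some ``witness'' vertex in $C_e$. This is a local fan-like structure with at most $2k$ apices, directly generalising the fan-planar case $k=1$, which is already known to admit product structure~\cite{ShallowMinorsUSW}. I would use the witness sets $C_e$ to argue that the planar path in $G^+$ corresponding to $e$ visits only rows close to those of the two endpoints of $e$: each crossing it contains must come from a short detour through the neighbourhood of some $w \in C_e$, and only $2k$ such detours are available. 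If this can be pushed through, every edge of $G$ has bounded \emph{row span} $L(k) := \max_{uv \in E(G)} |\mathrm{row}(u) - \mathrm{row}(v)|$. Once this bound is in hand, collapsing each block of $L(k)$ consecutive rows of $H^+ \boxtimes P$ into a single super-row produces a graph $H$ of treewidth $O(L(k))$ with $G \subseteq H \boxtimes P'$ for a shorter path $P'$, delivering product structure.

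The main obstacle is precisely the row-span bound. The planar product structure theorem is a global statement that comes with no a priori locality guarantee: a planar path joining $u$ to $v$ in $G^+$ may in principle wander through rows far from those of its endpoints, even if the underlying topological drawing is ``geometrically local''. One would need either a refined version of the planar product structure theorem producing a decomposition compatible with a prescribed embedding, or an entirely different route. A promising alternative is to realise $k$-independent crossing graphs as subgraphs of a known class with product structure, such as $(g,\delta)$-string graphs~\cite{distel2023powers,ProductStructurekPlanarGraphs}, by representing each edge of $G$ as a short string and translating the bounded-matching crossing condition into a bound on $\delta$. Either route must also contend with the fact that the planarization $G^+$ depends on the chosen drawing, so the argument should either be drawing-independent or provide a common product decomposition that works for every admissible $k$-independent crossing drawing simultaneously.
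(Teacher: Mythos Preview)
The statement you are attempting is listed in the paper as an open \emph{conjecture}; the paper provides no proof. After stating it, the authors only remark that it ``seems reasonable'' by analogy with $k$-planar, fan-planar, and $k$-fan-bundle graphs, and point to the shallow-minor machinery of Hickingbotham and Wood~\cite{ShallowMinorsUSW} (showing that $r$-shallow minors of $H \boxtimes K_\ell$ with $r,\ell,\tw(H)$ bounded have product structure) as the natural tool. So there is nothing on the paper's side to compare your argument against; your proposal is not competing with a proof but with a blank.

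That said, your sketch is in the right spirit and aligns with the route the paper gestures at, but it is not a proof, and you are candid about this. The genuine gap is exactly the one you name: bounding the row span of an edge in the planarisation $G^+$. In a $k$-independent crossing drawing an edge $e$ can be crossed by an \emph{unbounded} number of edges (they merely form $\leq k$ stars), so the path in $G^+$ replacing $e$ can be arbitrarily long. Nothing in the planar product structure theorem forces such a path to stay in rows near its endpoints, and the witness set $C_e$ lives in $V(G)$, not among the dummy vertices, so it gives no direct control over where those dummies land in $H^+ \boxtimes P$. Your alternative via $(g,\delta)$-string graphs has the same defect: the parameter $\delta$ there bounds the number of crossings per string, which is precisely what $k$-independent crossing fails to bound. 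A successful argument would presumably need to realise $G$ as a bounded-radius shallow minor of a product (as in~\cite{ShallowMinorsUSW} for fan-planar graphs, the case $k=1$), and it is exactly this step that remains open.
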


By \cref{thm:results_k-independent-crossing} this would imply the following conjecture matching \cref{con:NoProductStructure}.

\begin{conjecture}\label{con:HaveProductStructure}
    For every $\alpha \geq s(n)$, the class of intersection graphs of $\alpha$-free homothetic regular $n$-gons has product structure.
\end{conjecture}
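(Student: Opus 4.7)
The plan is to deduce Conjecture~\ref{con:HaveProductStructure} by chaining Theorem~\ref{thm:results_k-independent-crossing} with the intermediate conjecture that every $k$-independent crossing graph admits product structure. By Theorem~\ref{thm:results_k-independent-crossing}, for $\alpha \geq s(n)$ the canonical drawing of every $G \in \mathcal{G}(\P_n,\alpha)$ is $k$-independent crossing with $k = 26(n+1)$, so the whole task reduces to establishing that fixed-$k$ independent crossing drawings yield product structure.

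For this intermediate step I would try to embed the class of $k$-independent crossing graphs into a class already known to have product structure. A natural target is fan-planar or $k$-fan-bundle graphs, both cited in the related work as admitting product structure. The idea is that the edges crossing any single edge $e$ in a $k$-independent crossing drawing admit a vertex cover of size at most $k$, hence decompose into at most $k$ ``fans'' incident to these cover vertices. One would then try to re-route or subdivide edges so that each edge is crossed only by edges of a single fan, giving a fan-planar-like drawing. If this re-routing can be done in a bounded-modification way (e.g.\ by splitting each vertex into $O(k)$ copies), standard closure properties of product structure under blow-up by a constant would finish the argument.

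In parallel, I would pursue a geometric approach specific to $\mathcal{G}(\P_n,\alpha)$ modeled on the unit-disk argument recalled in the introduction. The goal is to produce an explicit $P \boxtimes H$ supergraph. The ingredients in hand are a bounded ply (Lemma~\ref{lem:bounded-clique-number}), the corner-or-contact dichotomy for touching pairs (Lemma~\ref{lem:must-hit-p-or-corner}), and $\alpha$-freeness which forces shapes of similar size to have well-separated centers. I would group the shapes into dyadic size classes, overlay for each class a grid of matching side length, bound the number of centers per cell via $\alpha$-freeness, and then sweep the plane along a row partition $P$. Inside each row the candidate $H$ is a path power whose width is controlled at each scale by the per-cell bound and across scales by the ply bound from Lemma~\ref{lem:bounded-clique-number}.

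The hardest step will be the cross-scale interactions: a very large shape can overlap huge portions of many scales of small shapes, so neighborhoods mix arbitrarily many dyadic levels. Any direct grid-sweep argument must control how a small shape is absorbed into the ``row'' of the large shapes that cover it, while keeping the width of $H$ bounded only in $n$. I expect the cleanest resolution to come via the first approach: proving the stand-alone conjecture that $k$-independent crossing graphs admit product structure, perhaps by reducing to fan-planarity after a bounded vertex blow-up and then invoking the existing product structure theorem for fan-planar graphs. Once that is in place, Conjecture~\ref{con:HaveProductStructure} follows immediately by Theorem~\ref{thm:results_k-independent-crossing}.
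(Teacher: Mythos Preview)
The statement you are attempting to prove is a \emph{conjecture} in the paper, not a theorem; the paper offers no proof of it. What the paper does say is precisely the reduction in your first paragraph: it states the separate conjecture that $k$-independent crossing graphs admit product structure, and then remarks that by Theorem~\ref{thm:results_k-independent-crossing} this would imply Conjecture~\ref{con:HaveProductStructure}. So your reduction is exactly the paper's own motivation for posing the conjecture, not a proof that goes beyond it.

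The remainder of your proposal is a sketch of how one might attack the intermediate conjecture, but neither route closes. For the fan-planar reduction, having a vertex cover of size $k$ on the crossers of each edge does not give a single fan per edge; you would need a global re-drawing in which every edge is crossed by one star, and a bounded vertex blow-up does not obviously achieve this (fan-planarity is a per-edge condition on the \emph{drawing}, and the blow-up must be done consistently across all edges simultaneously). For the geometric sweep, you yourself identify the obstruction: cross-scale interactions. The unit-disk argument works because all shapes live at one scale, so a single grid controls everything; with unbounded size ratios, a large $n$-gon sits in arbitrarily many rows of any fixed layering, and Lemma~\ref{lem:bounded-clique-number} only bounds ply at a point, not the number of scales meeting a row. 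Neither sketch supplies the missing idea, which is why the paper leaves the statement as a conjecture.
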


This seems reasonable since similar beyond-planar graph classes, such as $k$-planar graphs\cite{ProductStructurekPlanarGraphs}, fan-planar graphs and $k$-fan-bundle graphs \cite{ShallowMinorsUSW}, have been shown to have product structure.
In particular, Hickingbotham and Wood~\cite{ShallowMinorsUSW} show that if all graphs in $\mathcal{G}$ are $r$-shallow minors of $H \boxtimes K_l$ with $r,l, \tw(H) \in O(1)$, then $\mathcal{G}$ has product structure.
For example, they show this to be true for fan-planar graphs. 


\bibliography{bibliography}    

\end{document}